\documentclass{scrartcl}

\usepackage[utf8]{inputenc}
% \usepackage{mathpazo}
% \usepackage[scaled=0.95]{helvet}
% \usepackage{courier}
% \linespread{1.05} % Palatino looks better with this

\usepackage{graphicx}
\usepackage{comment}

\usepackage{todonotes}

% % Sentence distribution penalties
% \clubpenalty=10000
% \widowpenalty=10000

%%%% Chapter & part style

% \RedeclareSectionCommand[%
% font={\scshape \bfseries \Huge}, 
% prefixfont={\scshape \mdseries \huge },
% innerskip=2em]{part}

% \RedeclareSectionCommand[%
% font={\bfseries \Huge}, 
% prefixfont={\itshape \mdseries \huge },
% innerskip=2em]{chapter}

\usepackage{amssymb,amsmath,amsthm,stmaryrd,mathrsfs,wasysym,bbm}
\usepackage{enumitem,mathtools,xspace, extarrows}
\usepackage{xstring} % For generating singluars and plurals in \backref

\usepackage{aliascnt}
\definecolor{greencolor}{rgb}{0,0.45,0}
\definecolor{ucphcolor}{rgb}{0.517,0.016,0.016}
\usepackage[colorlinks,
citecolor=greencolor,
linkcolor=ucphcolor,
urlcolor=greencolor, unicode,linktocpage]{hyperref}
\usepackage[capitalize]{cleveref}

\usepackage{tikz}
\usetikzlibrary{decorations.pathmorphing,arrows}

%-----------------------------------------------------%
%%%% Page setup
% \usepackage[
% 	letterpaper,
% 	twoside=false,
% 	textheight=22cm,
% 	textwidth=14.4cm,
% 	marginparsep=0.75cm,
% 	marginparwidth=2.5cm,
% 	heightrounded,
% 	centering
% ]{geometry}

\usepackage{lineno}

\linespread{1.03}

\sloppy

%
% Use cleveref instead of hyperref's \autoref
\let\autoref\cref

%%%%%%%%% START OF KAIF MACROS %%%%%%%%%%%%%%%%

\usepackage{adjustbox}

%-----------------------------------------------------%
%%%% Underline macro to ignore descenders, combination of https://alexwlchan.net/2017/10/latex-underlines/ and https://tex.stackexchange.com/questions/133243/controling-underline-depth-in-math-mode
\usepackage{contour}
\usepackage{ulem}

\contourlength{0.8pt}

\newcommand{\myuline}[1]{%
  \uline{\phantom{#1}}%
  \llap{\contour{white}{#1}}%
}

\makeatletter
\newcommand*{\saved@myuline}{}
\let\saved@myuline\myuline

\newcommand*{\mathuline}{%
  \mathpalette{\math@myuline\saved@myuline}%
}
\newcommand*{\math@myuline}[3]{%
  % #1: ulem command
  % #2: math style
  % #3: contents
  \mbox{#1{$#2#3\m@th$}}%
}

% optional

\renewcommand*{\myuline}{%
  \relax  
  \ifmmode
    \expandafter\mathuline
  \else
    \expandafter\saved@myuline
  \fi
}
\makeatother

%-----------------------------------------------------%
%%%% Tikz
\usepackage{tikz}
\usepackage{tikz-cd}
\usetikzlibrary{cd}
\usetikzlibrary{arrows, arrows.meta, positioning, calc}
\let \amsamp = &

%-----------------------------------------------------%

%%%% Bibliography
%-----------------------------------------------------%
%%%% Symbols
%%%% Alphabetical order

%-----------------------------------------------------%
%%%% Setting up cref

\Crefname{prop}{Proposition}{Propositions}
\Crefname{lem}{Lemma}{Lemmas}
\Crefname{cor}{Corollary}{Corollaries}
\Crefname{corAppx}{Corollary}{Corollaries}
\Crefname{thm}{Theorem}{Theorems}
\Crefname{thmAppx}{Theorem}{Theorems}
\Crefname{thmIntro}{Theorem}{Theorems}
\Crefname{alphThm}{Theorem}{Theorems}
\Crefname{alphProp}{Proposition}{Propositions}

\Crefname{defn}{Definition}{Definitions}
\Crefname{defnIntro}{Definition}{Definitions}
\Crefname{notation}{Notation}{Notations}
\Crefname{cons}{Construction}{Constructions}
\Crefname{rmk}{Remark}{Remarks}
\Crefname{obs}{Observation}{Observations}
\Crefname{trick}{Trick}{Tricks}
\Crefname{warning}{Warning}{Warnings}
\Crefname{conj}{Conjecture}{Conjectures}
\Crefname{assump}{Assumption}{Assumptions}
\Crefname{recollect}{Recollection}{Recollections}
\Crefname{terminology}{Terminology}{Terminologies}
\Crefname{condition}{Condition}{Conditions}
\Crefname{setting}{Setting}{Settings}

\Crefname{questionIntro}{Question}{Questions}
\Crefname{question}{Question}{Questions}
\Crefname{example}{Example}{Examples}

\Crefname{figure}{Figure}{Figures}

\crefformat{equation}{(#2#1#3)}
\crefformat{section}{\S#2#1#3}
\crefmultiformat{section}{\S\S#2#1#3}{and~#2#1#3}{, #2#1#3}{, and~#2#1#3}

%-----------------------------------------------------%
%%%% Theorems
\newtheorem{thm}[subsubsection]{Theorem}
\newtheorem{prop}[subsubsection]{Proposition}
\newtheorem{lem}[subsubsection]{Lemma}
\newtheorem{cor}[subsubsection]{Corollary}
\newtheorem{thmAppx}[subsection]{Theorem}

\newtheorem{lemAppx}[subsection]{Lemma}
\newtheorem{corAppx}[subsection]{Corollary}
\newtheorem{alphThm}{Theorem}

\newcommand{\neutralize}[1]{\expandafter\let\csname c@#1\endcsname\count@}
\makeatother

\newtheorem{thmIntro}[subsection]{Theorem}

\theoremstyle{definition}
\newtheorem{defn}[subsubsection]{Definition}
\newtheorem{defnAppx}[subsection]{Definition}
\newtheorem{defnIntro}[subsection]{Definition}
\newtheorem{cons}[subsubsection]{Construction}
\newtheorem{nota}[subsubsection]{Notation}

\newtheorem{terminology}[subsubsection]{Terminology}

\newtheorem{questionIntro}[subsection]{Question}
\newtheorem{setting}[subsubsection]{Setting}

\newtheorem{rmk}[subsubsection]{Remark}
\newtheorem{rmkAppx}[subsection]{Remark}
\newtheorem{obs}[subsubsection]{Observation}
\newtheorem{example}[subsubsection]{Example}
\newtheorem{fact}[subsubsection]{Fact}

%-----------------------------------------------------%
%%%% Symbols
%%%% Alphabetical order

\newcommand{\target}{\mathrm{tgt}}

\newcommand{\posetCategory}{\mathrm{Poset}}
\newcommand{\poset}{\mathrm{Pos}}
\newcommand{\posetExample}{{L}}
\newcommand{\puncture}[1]{\mathring{#1}}
\newcommand{\initialObject}[1]{\widehat{#1}}
\newcommand{\subPoset}[1]{\breve{#1}}

\newcommand{\goodwillieTFunctor}{{T}}
\newcommand{\goodwillieApprox}{{P}}
\newcommand{\padding}{{C}}

\newcommand{\udl}[1]{\underline{#1}}

\newcommand{\distributiveInitialObject}{\varnothing}
\newcommand{\distributiveFinalObject}{\mathbbm{1}}

\newcommand{\cocartesianFunc}[1]{\underline{\mathrm{Fun}}^{{#1}_!}}
\newcommand{\cartesianFunc}{\underline{\mathrm{Fun}}^{\mathrm{cart}}}

\newcommand{\baseCat}{\mathcal{T}}

\newcommand{\excisiveCat}{\mathrm{Exc}}

\newcommand{\tcone}{^{\underline{\triangleleft}}}
\newcommand{\terminalTCat}{\underline{\ast}}
\newcommand{\tcocone}{^{\underline{\triangleright}}}

\newcommand{\basecat}{{\mathcal{T}}}

\newcommand{\totalCategory}{\mathrm{Total}}

\newcommand{\spc}{\mathcal{S}}

\newcommand{\canonical}{\mathrm{can}}

\newcommand{\constant}{\operatorname{const}}
\newcommand{\cat}{\mathrm{Cat}}

\newcommand{\presentable}{\mathrm{Pr}}

\newcommand{\J}{\mathcal{J}}
\newcommand{\I}{\mathcal{I}}

\newcommand{\sC}{{\mathcal C}}
\newcommand{\sD}{{\mathcal D}}

\newcommand{\D}{{\mathcal D}}

\newcommand{\op}{^{\mathrm{op}}}

\newcommand{\unstraighten}{\mathrm{UnStr}}

\newcommand{\M}{\mathcal{M}}

\newcommand{\E}{\mathcal{E}}

\newcommand{\orbit}{\mathcal{O}}

\newcommand{\spectra}{\mathrm{Sp}}
\newcommand{\finite}{\mathrm{Fin}}

\newcommand{\id}{\mathrm{id}}
\DeclareMathOperator{\map}{\mathrm{Map}}

\DeclareMathOperator{\func}{\mathrm{Fun}}

\newcommand{\unit}{\mathbbm{1}}

\def\colim{\qopname\relax m{colim}}

\DeclareMathOperator{\laxlim}{\mathrm{laxlim}}

\newcommand*\cocolon{%
        \nobreak
        \mskip6mu plus1mu
        \mathpunct{}%
        \nonscript
        \mkern-\thinmuskip
        {:}%
        \mskip2mu
        \relax
}
%-----------------------------------------------------%
%%%% Constants
\newcommand{\arrdisp}{0.33ex}
\newcommand{\arrdisplacementsp}{0.72ex}

%-----------------------------------------------------%
%%%%%% Custom commands
\newcommand{\ardis}{\ar@<\arrdisp>}
\newcommand{\ardissp}{\ar@<\arrdisplacementsp>}

%%%%%%%% Table of Contents
\setcounter{tocdepth}{2}

\makeatletter

\makeatother

%%%%%%%%% END OF KAIF MACROS %%%%%%%%%%%%%%%%

%%%%%%%%%%%%%%%%%%%%%%%%%%%%%%%%%%%%%%%%%%%%%%%%%%

\title{Parametrised functor calculus: \\ excision, spheres, and semiadditivity}
\author{\textsc{Kaif Hilman} 
	\and \textsc{Sil Linskens}}
\date{\today}

\begin{document}
	\maketitle
	
	\begin{abstract}
		We lay down the foundations of a theory of \textit{parametrised functor calculus}, generalising parts of the functor calculus of Goodwillie. We introduce the notion of excisable posets and develop a theory of excisive approximations in this context. As an application, we introduce two different excisable posets when parametrising over an atomic orbital category. By comparing the notions of excisiveness for these two posets, we relate the invertibility of certain spheres with Nardin's notion of parametrised semiadditivity, generalising Wirthm\"uller's classical result in equivariant homotopy theory for finite groups.
	\end{abstract}
	
	\vspace{3mm}
	\begin{center}
		\includegraphics[scale=0.2]{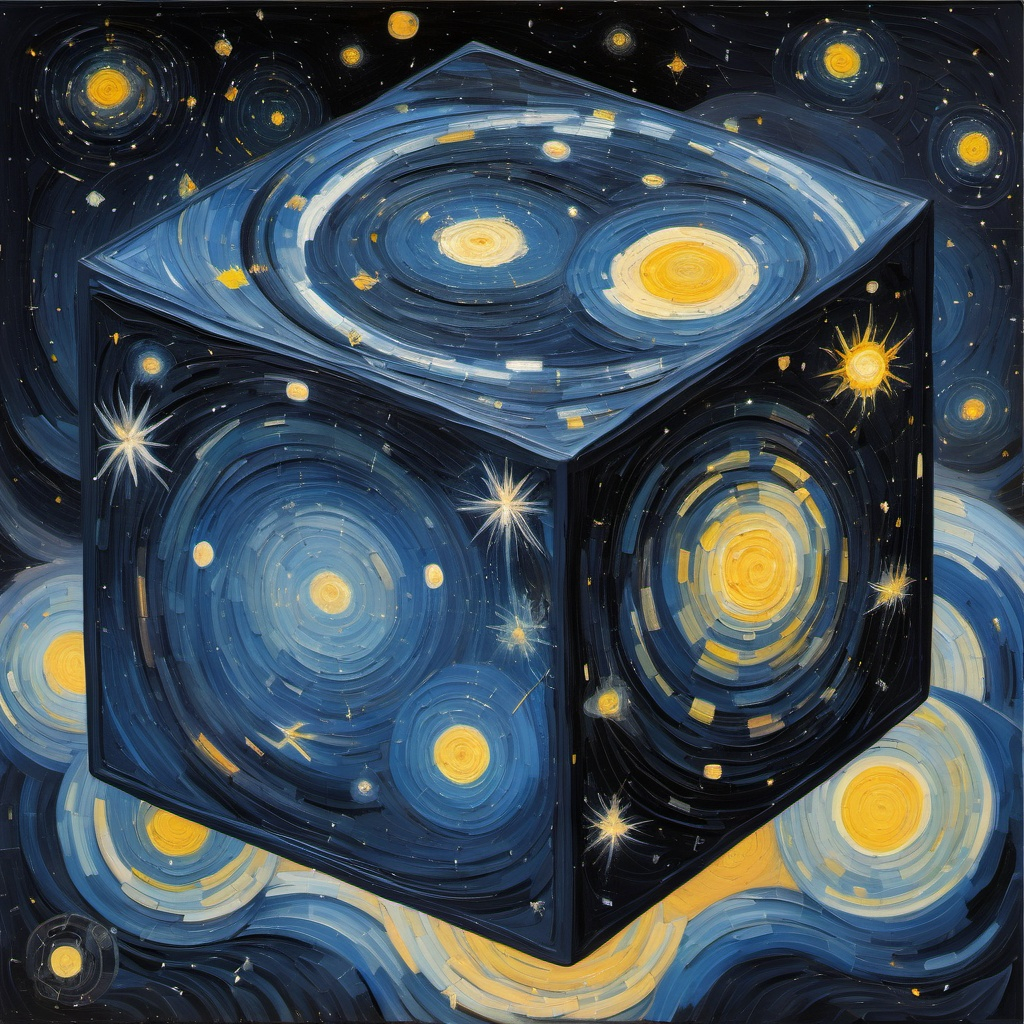}\footnote{Image generated by OpenArt AI.}
	\end{center}

	\newpage

	\tableofcontents

	\section{Introduction}
	
	Multiplicative structures give rise to some of the deepest problems and methods in homotopy theory. As such, analysis of functors such as $X\mapsto X^{\otimes n}$ and its  variants are of great interest in this field. Fortunately, there is a far--reaching and systematic strategy to study such functors in the form of Goodwillie's \textit{functor calculus} \cite{goodwillieI,goodwillieII,goodwillieIII}, which is by now part of the standard repertoire of methods in higher algebra. Analogously, subtle multiplicative structures in the form of the norm functors of \cite{greenleesMayMU,HHR} often play a central role in nontrivial results coming from equivariant higher algebra as well as in their parametrised generalisations \cite{parametrisedIntroduction,expose1Elements,shahThesis,nardinThesis,shahPaperII,nardinShah}. One may thus ask for  a suitable generalisation of  calculus adapted to this setting in service of understanding the native polynomial phenomena therein. In this direction, Blumberg \cite{blumberg} and Dotto--Moi \cite{dottoMoi} have studied the notion of equivariant linearity, and this was then extended to a fully fledged theory of equivariant Goodwillie calculus by Dotto \cite{dottoExcision} in the setting of model categories. 
	
	In this work, we make a first jab at a general theory of \textit{parametrised functor calculus} by developing a flexible framework of excision, inspired in part by Stoll's formulation in his study of functor calculus for non--cubes \cite{stoll}. The basic building blocks of our theory will be the notion of a \textit{parametrised excisable poset}. Interestingly, the methods we apply to study these objects are rather ``Boolean'' in nature. We believe this extracts, morally, a core aspect of cubes used in functor calculus. Our emphasis is also in a presentation which uses simple manipulations coming from adjunctions which exist already at the level of posets\footnote{Our arguments often boil down to aligning various stars (and shrieks) from Kan extensions. In this sense, our proofs are often pure ``astrology'', a phrase which we learnt from Shachar Carmeli.} as opposed to finer arguments using cofinality. The reason for this is in part aesthetic, but is also motivated by the more serious issue that many inductive proofs about cubes in ordinary Goodwillie calculus are not available to us when dealing with indexed products in the parametrised setting, and so we are forced to produce ``coordinate--free'' proofs for much of the elementary cubical facts. As such, the techniques employed in this article may be of independent interest. 
	
	As a proof of concept, we employ our rudimentary theory to obtain what seems to the authors a slightly surprising and unexpected parametrised generalisation of the classical fact in equivariant homotopy theory that inverting the representation spheres yields the so-called Wirthm\"uller isomorphism. This will be achieved by constructing appropriate ``parametrised spheres'' using the  cubes introduced in \cite{kaifNoncommMotives}.
	
	% In some sense, ordinary calculus is a systematic study of nonlinear phenomena in mathematics via linear methods based upon the process of \textit{differentiation}. The key point here is that we are analysing the failure of $f$ from being a linear map by studying  how the difference $f(x+\delta)-f(x)$ varies as $\delta$ varies. One can then ask if this process categorifies well by replacing functions with functors, and if this is a fruitful pursuit. Quite miraculously, the answer to both questions is in the affirmative! This line of thought was first explored by Eilenberg and Mac Lane  \cite{eilenbergMacLanePolynomial} in their study of the homology of Eilenberg--Mac Lane spaces, and later on independently by Goodwillie \cite{goodwillieIII} in his comprehensive theory of {functor calculus}. Goodwillie's theory is now a standard repertoire in the methods of higher algebra and it is this theory that we seek to generalise to the parametrised context.
	
	\subsection*{Excisable posets and excision}
	
	% To motivate it, we recall that, as is formulated elegantly in functor calculus, the ``correct'' replacement of linear functions in the setting of functors between pointed categories is that of a \textit{reduced 1--excisive} functor, i.e. one that preserves the zero object and sends pushout squares to pullbacks. In other words,  linear functors are roughly speaking those that send cofibre sequences to fibre sequences, generalising, for instance, Eilenberg and Steenrod's notion of a homology theory. Thus, taking this as a cue, we should test the nonlinearity of a functor by measuring the failure of it to send cofibre sequences to fibre sequences.
	
	Cubes constitute the grammar for Goodwillie's theory in that an ``$n$--excisive functor'' (this should be compared to polynomials of degree at most $n$ in ordinary algebra) is defined to be one which sends certain kinds of cocartesian $(n+1)$--cubes to cartesian ones. That this admits a rich theory mirroring many intuitions from ordinary algebra and at the same time captures many deep phenomena in homotopy theory is nothing short of miraculous. Even more amazingly, many of his methods are so robust that they are amenable to vast levels of generalisation. 
	
	One such generalisation is the recent work of Stoll \cite{stoll} who extended Goodwillie's definition of excisiveness to special posets  he termed as \textit{shapes} which are not necessarily cubes. It was shown in \cite[Thm. A]{stoll} that the excisive approximations defined against shapes admit a formula akin to Goodwillie's original formula for $n$--excisive approximations. As will be clear to the reader familiar with Stoll's work, our setup and language in \cref{section:excision_for_posets} is heavily inspired by his. However, our methods are rather different: instead of using shapes as the building blocks for our theory, we opt for an alternative approach using what we term as \textit{excisable posets}. The advantage of this approach is that many proofs may be done very formally by exploiting adjunctions that  exist already at the level of posets instead of cofinality arguments involving comma categories. Interestingly also,  there is a distinct ``Boolean'' flavour in working with these  posets. 
	
	\vspace{1mm}
	
	Before we explain what an excisable poset is, recall first that for a fixed category $\basecat$, a \textit{parametrised category} resp. \textit{functor} is an object resp. morphism in $\func(\basecat\op,\cat)$. This is the basic object  in Barwick--Dotto--Glasman--Nardin--Shah's theory of parametrised category theory \cite{parametrisedIntroduction,expose1Elements,shahThesis,shahPaperII,nardinExposeIV,nardinThesis}. A \textit{parametrised poset}\footnote{The consideration of such structured posets is not new: in the equivariant context, this has been explored for example in \cite{villaroelThesis,villaroelArxiv,JackowskiSlominska,mayStephanZakharevich} and also in \cite{dottoMoi,dottoExcision} for their equivariant calculus. } is then simply an object in the full subcategory $\func(\basecat\op,\posetCategory)\subseteq \func(\basecat\op,\cat)$, and we say that it is \textit{complementable} (c.f. \cref{defn:parametrised_posets}) if it is distributive and for every object $x$, there exists another object $x^c$ with $x\vee x^c=\distributiveFinalObject$ where $\distributiveFinalObject$ is the final object of the poset. Examples of complementable posets include  those which are fibrewise given by cubes of various dimensions with (co)product--preserving transition functors.  
	
	\begin{defnIntro}[c.f. \cref{defn:excisable_poset}]\label{defnIntro:excisable_posets}
		An \textit{excisable poset} is the datum of a complementable parametrised poset $\udl{\posetExample}$ together with a nonempty full subposet $\sigma\colon \subPoset{\udl{\posetExample}}\subseteq \udl{\posetExample}$ which is downward closed, i.e. for all $u\in \baseCat$, if $x\rightarrow y$ is a morphism in $\posetExample(u)$ where $y\in \subPoset{\posetExample}(u)$, then $x$ is also in $\subPoset{\posetExample}(u)$.
	\end{defnIntro}
	
	\begin{defnIntro}
		Let $\sigma\colon \subPoset{\udl{\posetExample}}\subseteq \udl{\posetExample}$ be an excisable poset and $F\colon \udl{\sC}\rightarrow\udl{\D}$ a parametrised functor. We say that $F$ is $\sigma$\textit{--excisive} if it sends $\udl{\posetExample}$--indexed diagrams in $\udl{\sC}$ left Kan extended from $\subPoset{\udl{\posetExample}}$ to a limit diagram in $\udl{\D}$.
	\end{defnIntro}
	
	In the unparametrised context, these definitions recover Goodwillie's notion of $n$--excisiveness by using the excisable poset given by the inclusion of subsets of size at most 1 in the $(n+1)$--cube.
	
	A basic insight of the present work is that the simple \cref{defnIntro:excisable_posets} affords a sufficiently rich environment to support the trappings of a theory of excisiveness while simultaneously being easy to work with. The complementability hypothesis plays a key role in allowing us to separate out orthogonal directions in the poset  by taking the product with an object $x\wedge-$ (see \cref{lem:complement_decompositions} for this procedure). What is more, since $x\wedge-$ is an idempotent procedure, extracting these orthogonal directions yield a Bousfield localisation and colocalisation on the poset, see \cref{cor:smashing_colocalisations}. Among other things, this allows us to generalise many of the proof steps due to Goodwillie (and in part, Rezk \cite{rezkGoodwillie}) very smoothly for ``adjunction reasons'' as well as allowing us to define the notion of faces for excisable posets in \cref{subsec:face_decompositions} that will be crucial in relating the different levels of excisiveness.
	
	In particular, we generalise Goodwillie's formula for excisive approximations to our setting. In more detail, under some presentability hypotheses, the inclusion $\excisiveCat^n(\sC,\D)\subseteq \func(\sC,\D)$ of $n$--excisive functors into all functors admit a left adjoint $P_n$ for adjoint functor theorem reasons. However, what is remarkable is that, under a ``differentiability'' hypothesis (c.f. \cite[Def. 6.1.1.6]{lurieHA}) on $\D$, Goodwillie's methods provides us an explicit formula for $\goodwillieApprox_n$ which is indispensable for both practical and theoretical purposes. For example, we have the formula
	\[\goodwillieApprox_1F \simeq \colim_n\Omega^nF\Sigma^n.\]
	Now, for an excisable poset $\sigma$, we give a similar explicit construction for $\goodwillieApprox_{\sigma}$ (c.f. \cref{cons:excisive_approximation_functors}), and using a straightforward generalisation of the definition of differentiability to our context, our first main result may thus be stated  as follows:

	\begin{alphThm}[Excisive approximations, c.f. \cref{mainthm:excisive_approximation_left_adjoint}]\label{alphthm:excisive_approximation_left_adjoint}
		Let $\sigma\colon \subPoset{\udl{\posetExample}}\subseteq \udl{\posetExample}$ be an excisable poset. Let $\udl{\sC},\udl{\D}\in\cat_{\basecat}$ where $\udl{\sC}$ has $\sigma$--left Kan extensions and a final object and $\udl{\D}$ is $\sigma$--differentiable. Then the fully faithful inclusion $\udl{\excisiveCat}^{\sigma}(\udl{\sC},\udl{\D})\subseteq \udl{\func}(\udl{\sC},\udl{\D})$ admits a left adjoint given by $\goodwillieApprox_{\sigma}$.
	\end{alphThm}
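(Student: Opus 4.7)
The plan is to verify the two conditions characterising a reflective left adjoint for the construction $\goodwillieApprox_{\sigma}$ of \cref{cons:excisive_approximation_functors}. Recall that $\goodwillieApprox_{\sigma}F$ is realised as a sequential colimit $\colim_{k}\goodwillieTFunctor_{\sigma}^{k}F$ of iterates of a one-step approximation functor $\goodwillieTFunctor_{\sigma}$, equipped with a canonical natural transformation $\eta\colon \id \to \goodwillieTFunctor_{\sigma}$. Unwinding the construction, $\goodwillieTFunctor_{\sigma}F$ is the limit over $\udl{\posetExample}$ of $F$ applied to a $\udl{\posetExample}$-shaped diagram obtained by $\sigma$-left Kan extension (using the final object of $\udl{\sC}$), and $\eta_F$ is the comparison map arising from evaluation at the final object of $\udl{\posetExample}$. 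The goal is then to establish two claims: (a) $\eta_F$ is an equivalence whenever $F$ is $\sigma$-excisive, and (b) $\goodwillieApprox_{\sigma}F$ is $\sigma$-excisive for every $F$.

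Claim (a) is essentially a rewording of the definition of $\sigma$-excisiveness: $F$ sends $\sigma$-left Kan extended $\udl{\posetExample}$-diagrams to limit diagrams in $\udl{\D}$, which is exactly the statement that $\eta_F$ is an equivalence. Passing to the sequential colimit then yields that $F \to \goodwillieApprox_{\sigma}F$ is an equivalence for $\sigma$-excisive $F$. Combined with (b), a standard Bousfield-localisation argument yields the adjunction: for any $\sigma$-excisive $G$, the map $\map(\goodwillieApprox_{\sigma}F,G) \to \map(F,G)$ induced by $\eta$ is an equivalence, since $\goodwillieApprox_{\sigma}$ is idempotent up to equivalence --- $\goodwillieApprox_{\sigma}\goodwillieApprox_{\sigma}F \simeq \goodwillieApprox_{\sigma}F$ --- by applying (a) to the excisive object $\goodwillieApprox_{\sigma}F$ produced by (b).

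The main obstacle is claim (b), that $\goodwillieApprox_{\sigma}F$ is itself excisive. Equivalently, one must show that $\eta$ evaluated at $\goodwillieApprox_{\sigma}F$ is an equivalence, which reduces to showing that $\goodwillieTFunctor_{\sigma}$ commutes with the sequential colimit $\colim_{k}\goodwillieTFunctor_{\sigma}^{k}F$. Since $\goodwillieTFunctor_{\sigma}$ is built from a $\udl{\posetExample}$-shaped limit in $\udl{\D}$, this commutation is precisely the interchange of $\udl{\posetExample}$-limits with sequential colimits, which is the content of the $\sigma$-differentiability hypothesis and generalises Goodwillie's classical use of the interchange of finite limits with filtered colimits. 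Finally, the parametrised enhancement, upgrading from a fibrewise adjunction to one in $\cat_{\basecat}$, follows by inspection: each ingredient in the construction of $\goodwillieApprox_{\sigma}$ --- $\sigma$-left Kan extensions, $\udl{\posetExample}$-shaped limits, and sequential colimits --- is compatible with the restriction functors along morphisms in $\basecat$, supplying the necessary Beck--Chevalley compatibilities for free.
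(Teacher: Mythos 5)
There is a genuine gap in your claim (b). You assert that excisiveness of $\goodwillieApprox_{\sigma}F$ is \emph{equivalent} to $\theta$ evaluated at $\goodwillieApprox_{\sigma}F$ being an equivalence, and you then deduce the latter by commuting $\goodwillieTFunctor_{\sigma}$ past the sequential colimit using $\sigma$--differentiability. But these two conditions are not equivalent: $\theta_G$ being an equivalence only says that $G$ sends the very special $\sigma$--cocartesian diagrams $\padding_{\sigma}(X)$ (those left Kan extended from the diagram that is $X$ at $\distributiveInitialObject$ and the final object elsewhere on $\subPoset{\udl{\posetExample}}$) to cartesian diagrams, whereas $\sigma$--excisiveness requires this for \emph{every} $\sigma$--cocartesian diagram $D$. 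This is exactly the same subtlety as in classical Goodwillie calculus, where $T_nF\simeq F$ does not formally imply $n$--excisiveness of $F$. The paper closes this gap with the analogue of Rezk's lemma (\cref{lem:rezk_goodwillie_lemma}): for an \emph{arbitrary} $\sigma$--cocartesian $D$, the map $FD\rightarrow(\goodwillieTFunctor_{\sigma}F)(D)$ factors through a cartesian diagram $E$, so that $(\goodwillieApprox_{\sigma}F)(D)$ is an interleaved sequential colimit of cartesian diagrams $E_0\rightarrow E_1\rightarrow\cdots$ and is therefore cartesian by $\sigma$--differentiability. That factorisation is the technical heart of the theorem and rests on the complementability structure via \cref{prop:squashing_preserves_cocartesianness} and \cref{prop:squashing_induces_cartesianness}; your proposal contains no substitute for it.

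A secondary point: your concluding Bousfield--localisation argument establishes only that $\theta_{\goodwillieApprox_{\sigma}F}$ is an equivalence (by applying (a) to the excisive object $\goodwillieApprox_{\sigma}F$). The standard criterion also requires $\goodwillieApprox_{\sigma}(\theta_F)$ to be an equivalence, which is not automatic; the paper proves it separately (\cref{lem:approximations_are_idempotent}) by using differentiability to commute $\goodwillieApprox_{\sigma}$ past the right Kan extension defining $\goodwillieTFunctor_{\sigma}$, together with \cref{lem:padding_preserves_kan_extensions_and_final_objects}. Your use of the interchange of limits and sequential colimits is in the right spirit for this step, but as written it is deployed in the wrong place and does not repair the gap in (b).
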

	
	As mentioned above, it is  the specific formula for $\goodwillieApprox_{\sigma}$ and not its mere existence that is the real content of the theorem. This is a point which we will crucially exploit to obtain our \cref{alphthm:spherically_faithful_implies_semiadditivity}.

	\subsection*{Cubes, spheres, and semiadditivity}
	
	Having set up some basic theory, one is then led to ask if this relates to or solves any problem that a priori has nothing to do with parametrised functor calculus. This is perhaps the light in which our second set of main results should be viewed, the context of which we explain now.
	
	The following is one of the classical foundational results in equivariant stable homotopy theory, first proved by Wirthm\"uller\footnote{Even though Wirthm\"uller proved this for all compact Lie groups, we only give the case of finite groups since it is easier to state and is the only level of generality that is pertinent to this article.} \cite{wirthmuller}  at the level of homology theories. 
	
	\begin{thmIntro}\label{thmIntro:wirthmuller_iso}
		Let $G$ be a finite group and $S^{\rho_G}$ the regular representation sphere. Then writing $\spc_{G,*}\coloneqq \func(\orbit_G\op,\spc_*)$ for the category of pointed genuine $G$--spaces, the orbits $G/H_+\in\spc_{G,*}$ become self--dual in $\spectra_G\coloneqq\spc_{G,*}[(S^{\rho_G})^{-1}]$.  
	\end{thmIntro}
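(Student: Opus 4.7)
The plan is to deduce self-duality formally from the fact that the localisation $\spectra_G = \spc_{G,*}[(S^{\rho_G})^{-1}]$ is \emph{parametrised semiadditive} over $\orbit_G$ in the sense of Nardin. Granting this structural statement, for every morphism $p \colon G/H \to G/G$ in $\orbit_G$ one obtains a canonical norm equivalence $p_{\sharp} \simeq p_{\ast}$ between the left and right adjoints of the restriction $p^{\ast} \colon \spectra_G \to \spectra_H$. This ambidexterity is the only ingredient one needs.

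To conclude, I would then evaluate the norm equivalence on the unit sphere $\sphere \in \spectra_H$ and compute the two sides separately. On the one hand, the left adjoint gives $p_{\sharp}(\sphere) \simeq \susps_+(G/H) = G/H_+$, via the standard projection formula/Yoneda identification of $p_{\sharp}$ applied to the monoidal unit. On the other hand, the right adjoint is, essentially by definition, the internal mapping spectrum $p_{\ast}(\sphere) \simeq F(G/H_+, \sphere_G)$, i.e.\ the Spanier--Whitehead dual $D(G/H_+)$. The norm map therefore becomes the desired self-duality equivalence $G/H_+ \simeq D(G/H_+)$ in $\spectra_G$.

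The main obstacle is, of course, the input itself: establishing parametrised semiadditivity of $\spectra_G$. Classically, this is tantamount to the Wirthm\"uller isomorphism, and was proved directly by Wirthm\"uller via Pontryagin--Thom using an equivariant tubular neighbourhood of the embedding $G/H \hookrightarrow S^{\rho_G}$; the modern $\infty$-categorical recasting is the content of \cite{nardinThesis}. The philosophy of the present article is rather to \emph{derive} semiadditivity as a formal consequence of the invertibility of suitable parametrised spheres built from the cubes of \cite{kaifNoncommMotives}, thereby both clarifying why the regular representation sphere plays this distinguished role and generalising the whole picture to an arbitrary atomic orbital base category.
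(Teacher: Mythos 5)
Your reduction is sound, but note first that the paper never actually proves this statement: it is quoted as a classical theorem with a citation to Wirthm\"uller, and the surrounding text explicitly says the standard proof goes through equivariant manifold theory and the Pontryagin--Thom collapse. So the honest comparison is with the route the paper's own machinery supplies, namely \cref{mainThm:sphere_invertible_equivalent_to_parametrised_stable} specialised to $\basecat=\orbit_G$. Measured against that, your proposal has the right shape. The formal half --- semiadditivity gives a norm equivalence $p_!\simeq p_*$ for $p\colon G/H\to G/G$, and evaluating on the unit identifies $p_!\sphere\simeq \Sigma^{\infty}_+G/H$ with $p_*\sphere\simeq F(G/H_+,\sphere)$ --- is correct and standard; if you want genuine dualizability rather than a bare equivalence with the function spectrum, note that the norm identifies the right adjoint $p_*p^*\simeq F(G/H_+,-)$ of $G/H_+\wedge-\simeq p_!p^*$ with $G/H_+\wedge-$ itself, which is precisely a self-duality datum. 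The substantive half you defer, and as you observe, citing Wirthm\"uller or Nardin for semiadditivity would be circular relative to the statement being proved. To close the argument with the paper's \cref{mainThm:sphere_invertible_equivalent_to_parametrised_stable} you need one check you leave implicit: that inverting the single sphere $S^{\rho_G}$ yields spherical invertibility for \emph{every} $w\colon W\to V$ in $\finite_{\orbit_G}$. This holds because each $\Sigma^w$ is smashing with a reduced permutation representation sphere $S^{\widetilde{\mathbb{R}[K/H]}}$, and every such sphere is a smash factor of a restriction of $S^{\rho_G}$, hence invertible once $S^{\rho_G}$ is. With that observation added, your argument becomes exactly the non-geometric derivation the authors advertise as the point of their Theorems~B and~C.
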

	This self--duality in $\spectra_G$ is of fundamental importance in the methods of equivariant stable homotopy theory. In fact, such self--duality admits a generalisation in the form of \textit{parametrised semiadditivity} due to Nardin \cite{nardinExposeIV,nardinThesis} in any context parametrised over so--called ``atomic orbital'' base categories, see \textit{loc. cit.} Using this, Nardin also defined the notion of \textit{parametrised stability} for parametrised categories as being fibrewise stable and satisfying the aforementioned semiadditivity.
	
	However, the relationship between  representation spheres and the orbits turns out to be quite  mysterious, at least from the categorical point of view. Indeed, the standard proof for \cref{thmIntro:wirthmuller_iso}  uses   equivariant smooth manifold theory via the Pontryagin--Thom collapse map to produce the duality datum witnessing the orbits as being self--dual. This approach is unfortunately not amenable to generalisation to the abstract parametrised setting where any kind of geometric argument is often not at our disposal. Moreover, unlike in the nonequivariant context where stability on a pointed category with sufficient colimits is tantamount to the suspension functor  being an equivalence, Nardin's definition of parametrised stability seems to ``separate out'' two distinct conditions. In light of this, one is then naturally led to ask the following:
	
	\begin{questionIntro}\label{question:alternative_characterisation_stability}
		Is there an alternative characterisation of parametrised stability in terms of the invertibility of some appropriate  spheres? If so, what should these spheres be?
	\end{questionIntro}
	
	To proceed, we turn to Dotto--Moi for guidance: in their \cite[Thm. 3.26]{dottoMoi},  \cref{thmIntro:wirthmuller_iso} was proved essentially by way of a precursor to equivariant Goodwillie calculus, which was later  fleshed out  into a full theory by Dotto in \cite{dottoExcision}. Inspired by their methods, we specialise the axiomatic theory of excision above to a special family of excisable posets given by the \textit{singleton inclusions} in \textit{parametrised cubes} as introduced in \cite[\textsection 3]{kaifNoncommMotives} (c.f. \cref{cons:singletonInclusion}) whose existence is a special feature of base categories that are  atomic orbital. Here, by a parametrised cube, we simply mean an indexed product of $\Delta^1$ (c.f. \cref{defn:parametrised_cubes}). The role of these diagrams, facilitated by the key cubical inputs of \cref{prop:productsOfStronglyCocartesianCubes,keylem:excisive_implies_semiadditivity_induction}, is that they encode parametrised semiadditivity in the form of parametrised cubes, which in turn are primed for calculus--theoretic manipulations. We term the excisiveness with respect to these excisable posets as \textit{singleton excision.}
	
	On the other hand, one can consider the excisable poset structure given by these parametrised cubes and the subposet away from the final object. Using this, we may construct certain \textit{parametrised spheres} (c.f. \cref{subsection:parametrised_spheres}), supplying a replacement of the regular representation sphere in this general context. Excisiveness against such excisable posets are termed, correspondingly, as \textit{spherical excision.} Furthermore, we say that a pointed parametrised category $\udl{\sC}$ is \textit{spherically invertible} (c.f. \cref{defn:spherically_faithful_category}) if the suspension functors along all parametrised cubes is an equivalence.  By comparing singleton and spherical excisions, we then employ a ``length induction''  and use \cref{alphthm:excisive_approximation_left_adjoint} to provide the following result:
	
	\begin{alphThm}[c.f. \cref{mainthm:spherically_faithful_implies_semiadditivity}]\label{alphthm:spherically_faithful_implies_semiadditivity}
		Suppose $\basecat$ is a locally short atomic orbital category (c.f. \cref{defn:locally_short_categories}). Let $\underline{\sC}$  be a pointed $\basecat$--category which is  parametrised bicomplete. If $\underline{\sC}$  is spherically invertible, then it is $\baseCat$--semiadditive.
	\end{alphThm}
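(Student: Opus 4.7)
The strategy is to deduce $\basecat$-semiadditivity of $\udl{\sC}$ by applying the key cubical input \cref{keylem:excisive_implies_semiadditivity_induction}, which I read as reducing semiadditivity to the assertion that the identity functor $\id_{\udl{\sC}}$ is singleton excisive with respect to every parametrised cube over $\basecat$. The length induction on $\basecat$ that ultimately produces semiadditivity is packaged inside this key lemma, and this is where the locally short hypothesis on $\basecat$ should enter.

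The first move is to observe that, for any parametrised cube $\udl{P}$, spherical excision is a priori \emph{stronger} than singleton excision. Indeed, the singleton subposet is contained in the punctured subposet obtained by removing the final object, so by transitivity of left Kan extensions any $\udl{P}$-diagram left Kan extended from singletons is automatically left Kan extended from the punctured subposet. Consequently every spherically excisive functor is a fortiori singleton excisive, and it therefore suffices to prove that $\id_{\udl{\sC}}$ is spherically excisive with respect to every parametrised cube.

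For this, invoke \cref{alphthm:excisive_approximation_left_adjoint} for the spherical excisable poset attached to each parametrised cube $\udl{P}$: parametrised bicompleteness furnishes the requisite left Kan extensions and differentiability on the target side, while pointedness makes the loop--suspension adjunction available. The theorem then identifies the spherical approximation $\goodwillieApprox\sphericalAdjective$ by an explicit colimit construction whose transition maps are built out of iterating the loop functor $\Omega_{\udl{P}}$ with the suspension functor $\Sigma_{\udl{P}}$ along $\udl{P}$. Evaluated at $F = \id_{\udl{\sC}}$, spherical invertibility renders each $\Sigma_{\udl{P}}$ an equivalence, so the transition maps of the defining colimit become equivalences and the system is essentially constant. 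Hence the unit $\id_{\udl{\sC}} \to \goodwillieApprox\sphericalAdjective \id_{\udl{\sC}}$ is an equivalence, which is precisely spherical excisiveness of $\id_{\udl{\sC}}$.

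The main anticipated obstacle is verifying that the hypotheses of \cref{alphthm:excisive_approximation_left_adjoint} truly apply here for every parametrised cube --- in particular that the pointed, parametrised bicomplete $\udl{\sC}$ is genuinely $\sigma$-differentiable for the spherical excisable poset $\sigma$, and that the explicit formula for $\goodwillieApprox\sphericalAdjective \id_{\udl{\sC}}$ interacts with the suspension equivalences in the predicted way, so that the colimit really collapses onto the identity rather than merely receiving an equivalence from it. Once these verifications are in hand, the chain ``spherical invertibility $\Rightarrow$ spherical excisiveness of $\id_{\udl{\sC}}$ $\Rightarrow$ singleton excisiveness of $\id_{\udl{\sC}}$ $\Rightarrow$ $\basecat$-semiadditivity of $\udl{\sC}$'' closes the argument.
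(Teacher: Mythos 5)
Your proposal is correct and follows essentially the same route as the paper: spherical invertibility makes the transition maps $\Omega^wF\Sigma^w$ in the explicit formula for $\goodwillieApprox_{\sigma_w}$ into equivalences, so $\id_{\udl{\sC}}$ is spherically $w$--excisive, hence singleton $w$--excisive, and \cref{keylem:excisive_implies_semiadditivity_induction} then yields semiadditivity. The differentiability obstacle you flag is exactly the one the paper addresses in \cref{rmk:applying_theorem_A_indirectly}, where it is observed that $\Omega^w$ commuting with sequential colimits (automatic here since $\Omega^w$ is an equivalence) suffices in place of full $\sigma$--differentiability.
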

	
	In fact, the theorem is proved using the weaker hypothesis of \textit{spherical faithfulness} (\cref{defn:spherically_faithful_category}), and to lend a convincing scope to the result, we supply plenty of examples of locally short atomic orbital categories in \cref{example:atomic_orbital}. This theorem may be viewed as a vast generalisation  of the classical result above (and thus also of \cite[Thm. 3.26]{dottoMoi}). Perhaps more philosophically, it shows that such results are not endemic just to the equivariant setting for finite groups and that they have a distinctly algebraic or categorical flavour independent of any geometric arguments. It bears mentioning, however, that our framework and results do \textit{not} apply to equivariance with respect to general compact Lie groups. Currently it seems that there is something irreducibly geometric at this level of generality. In this sense, \cref{alphthm:spherically_faithful_implies_semiadditivity} may be seen as a continuation in the development of our understanding of atomic orbital categories.

	\vspace{1mm}
	
	Finally, under presentability hypotheses, we will use \cref{alphthm:spherically_faithful_implies_semiadditivity} to give a new characterisation of parametrised stability, thus answering \cref{question:alternative_characterisation_stability} in the affirmative.
	
	\begin{alphThm}[c.f. \cref{mainThm:sphere_invertible_equivalent_to_parametrised_stable}]\label{alphThm:sphere_invertible_equivalent_to_parametrised_stable}
		Suppose $\basecat$ is a locally short atomic orbital category. Let $\udl{\sC}$ be a pointed parametrised presentable $\basecat$--category. Then $\udl{\sC}$ is parametrised stable if and only if it is spherically invertible. 
		%\todo{K: can we copy HA.1.4.2.27 to remove presentability hypothesis?} 
		
	\end{alphThm}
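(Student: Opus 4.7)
The plan is to prove the two implications separately, with \cref{alphthm:spherically_faithful_implies_semiadditivity} doing the heavy lifting in one direction.

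For the reverse implication, assume $\udl{\sC}$ is spherically invertible. Since spherical invertibility a fortiori implies spherical faithfulness, \cref{alphthm:spherically_faithful_implies_semiadditivity} already yields that $\udl{\sC}$ is $\baseCat$--semiadditive, so the only remaining piece is fibrewise stability. To extract this, I would observe that for each $u\in\baseCat$, the simplest parametrised cube indexed over $u$ (the ``singleton'' cube with one-dimensional fibre) recovers the ordinary $\Delta^1$, and its associated parametrised suspension coincides with the fibrewise $\Sigma \colon \udl{\sC}(u)\to\udl{\sC}(u)$. Spherical invertibility thus forces each such $\Sigma$ to be an equivalence. Since each $\udl{\sC}(u)$ is pointed and presentable by hypothesis, the standard fact that a pointed presentable $\infty$--category with invertible suspension is stable then upgrades this to fibrewise stability, and combined with semiadditivity gives parametrised stability.

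For the forward implication, suppose $\udl{\sC}$ is parametrised stable, so it is fibrewise stable and $\baseCat$--semiadditive. The goal is to show that for every parametrised cube $\Delta^1_{\udl{S}}$, the associated parametrised suspension is an equivalence. The strategy is to leverage the complementable structure of parametrised cubes (c.f.\ \cref{lem:complement_decompositions}) to factor the suspension as a composite of (i) ordinary fibrewise suspensions and (ii) indexed norm functors along orbits of $\baseCat$. Fibrewise stability renders the functors in (i) invertible, while $\baseCat$--semiadditivity renders the functors in (ii) invertible (as these are precisely the Wirthm\"uller-type maps that semiadditivity declares to be equivalences). The composite is therefore an equivalence, establishing spherical invertibility.

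The main obstacle is the explicit factorisation needed in the forward direction. The expected approach is a length induction on the parametrised cube, peeling off one atomic direction at a time via the Boolean decompositions underlying the excisable poset formalism (analogous to the length induction used for \cref{alphthm:spherically_faithful_implies_semiadditivity}); the locally short hypothesis on $\baseCat$ ensures termination. A more conceptual alternative is to observe that in a parametrised stable category the parametrised spheres become invertible objects in an appropriate Picard groupoid, so that suspension along any parametrised cube is automatically an autoequivalence. Either route sidesteps any geometric input, remaining consistent with the purely categorical style of the paper.
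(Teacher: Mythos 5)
Your easy direction (spherically invertible $\Rightarrow$ stable) is essentially the paper's: semiadditivity comes from \cref{mainthm:spherically_faithful_implies_semiadditivity}, and fibrewise stability from invertibility of the ordinary suspension, which is a special case of spherical invertibility. One small correction there: the cube whose fibre is $\Delta^1$ is $w_*\udl{\Delta}^1$ for $w=\id_V$, and $\Sigma^{\id_V}$ is the colimit over $\Delta^1\backslash\{1\}=\{0\}$, i.e.\ the \emph{identity} functor, not $\Sigma$; the ordinary fibrewise suspension is rather $\Sigma^{(\id_V)_+}$, i.e.\ the suspension attached to the fold map $V\sqcup V\rightarrow V$ (equivalently, apply \cref{prop:calculus_of_spheres}(1)). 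This is easily repaired and does not affect the argument.

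The forward direction (stable $\Rightarrow$ spherically invertible) has a genuine gap. Your proposed factorisation of $\Sigma^w$ into ``fibrewise suspensions and indexed norm functors along orbits, the latter rendered invertible by semiadditivity'' conflates two different norms. Semiadditivity makes the \emph{additive} norm $w_!\Rightarrow w_*$ an equivalence; but these are functors $w_*w^*\udl{\sC}\rightarrow\udl{\sC}$, not endofunctors that could ``become invertible'', and $\Sigma^w$ does not factor through them: already for $\basecat=\orbit_G$ and a single orbit $G/H\rightarrow G/G$, $\Sigma^w$ is smashing with the reduced permutation representation sphere, which is not a composite of ordinary suspensions and induction/restriction functors. \cref{prop:calculus_of_spheres}(2) only reduces to single orbits; it does not decompose a single orbit further. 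The ingredient that actually makes $S^{w_+}$ invertible is the \emph{multiplicative} norm: by \cref{prop:calculus_of_spheres}(3) one has $S^{w_+}\simeq w_{\otimes}S^1$ in $\udl{\spc}_*$, and in parametrised spectra the norm functors are symmetric monoidal, hence carry the invertible object $S^1$ to an invertible object; desuspending once via \cref{prop:calculus_of_spheres}(1) makes $S^w$ invertible in $V^*\myuline{\spectra}$. One then transports this to $\udl{\sC}$ using that a pointed parametrised \emph{presentable} stable category is a module over $V^*\myuline{\spectra}$, so that $\Sigma^w$ on $V^*\udl{\sC}$ is identified with $\Sigma^w\otimes\id$. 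Your ``conceptual alternative'' (the spheres lie in a Picard groupoid) names exactly the statement that has to be proved, but supplies no mechanism for it, so as written it is circular; the symmetric monoidality of the norms on parametrised spectra together with the module structure (where presentability is genuinely used) is the missing content.
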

	
	In a forthcoming joint work \cite{barthelHilmanKonovalov} of the first author  with Tobias Barthel and Nikolai Konovalov, \cref{alphThm:sphere_invertible_equivalent_to_parametrised_stable} will serve as one of the key ingredients to provide a new universal property for the category of $d$--excisive endofunctors on spectra.

	\subsubsection*{Structural overview}
	
	We develop the foundations of parametrised excisiveness in \cref{section:excision_for_posets} by formulating the notion of an excisable poset and proving the fundamental \cref{alphthm:excisive_approximation_left_adjoint}. In the next \cref{section:parametrised_cubical_excision}, we specialise the preceding theory to atomic orbital base categories to prove  \cref{alphthm:spherically_faithful_implies_semiadditivity} and \cref{alphThm:sphere_invertible_equivalent_to_parametrised_stable} by introducing the parametrised spheres. In \cref{section:decomposition_of_colimits}, we record a parametrised enhancement of the decomposition results of \cite{horevYanovski}.
	
	\subsubsection*{Prerequisites and notations}
	This article is written in the formalism of parametrised higher categories as introduced and developed in \cite{parametrisedIntroduction,expose1Elements,nardinExposeIV,nardinShah,shahThesis,shahPaperII}. For a convenient one--stop reference on this, see also for example the survey given in \cite[\textsection 2-4]{kaifPresentable}. Most of our notations will be based on this latter source. Since they will play such a prevalent role, we highlight the following notations so as to prevent any ambiguity. 
	
	Let  $\basecat$ be a base category. We shall write $\cat_{\basecat}$ for the category $\func(\basecat\op,\cat)$ of $\basecat$--categories and $\udl{\cat}$ for the $\basecat$--category of $\basecat$--categories. For a morphism $w\colon W \rightarrow V$ in $\finite_{\basecat}$, we write the associated induction--restriction--coinduction adjunctions as
	\begin{center}
		\begin{tikzcd}
			\cat_{\basecat_{/V}}\ar[rr, "w^*"] & & \cat_{\basecat_{/W}}. \ar[ll, bend right = 30, "w_!"' description] \ar[ll, bend left = 26, "w_*" description]
		\end{tikzcd}
	\end{center}
	For a $\basecat$--category $\udl{\sC}\in\cat_{\basecat}$ and an object $V\in\finite_{\basecat}$, we write $V^*\udl{\sC}$ for the associated $\basecat_{/V}$--category obtained by restriction along the functor $\basecat_{/V}\rightarrow \basecat$.
	
	\subsubsection*{Acknowledgements}
	We thank Tobias Barthel, Bastiaan Cnossen, Jesper Grodal, Markus Land,  Maxime Ramzi, and especially Emanuele Dotto for many useful and enlightening discussions on this subject. This work is a wide--reaching extension of the work done by KH as part of his PhD thesis \cite[Ch. 6]{kaifThesis}, and so he is grateful for the support of the Danish National Research Foundation through the
	Copenhagen Centre for Geometry and Topology (DNRF151). KH is also supported by the European Research Council (ERC) under Horizon Europe (grant
	No. 101042990) and would like to thank the Max Planck Institute for Mathematics (MPIM) in Bonn for its hospitality. SL would like thank the Mathematisches Institut der Universität Bonn, where much of the work on this project was undertaken. SL is currently a member of the faculty of Mathematics of Universit\"at Regensburg.
	
	\section{Parametrised excision for posets}\label{section:excision_for_posets}
	
	We introduce and prove  basic properties of the notions of excisable posets in \cref{subsec:excisable_posets} and of their associated faces in \cref{subsec:face_decompositions}; we then apply these preliminaries  in \cref{subsec:excision} to develop the concept of parametrised excision and prove \cref{alphthm:excisive_approximation_left_adjoint}.
	
	\subsection{Excisable posets}\label{subsec:excisable_posets}
	
	We begin by defining some terms and recording some easy observations on posets.
	
	\begin{defn}\label{defn:parametrised_posets}
		A \textit{parametrised poset} is an object in $\posetCategory_{\baseCat}\coloneqq\func(\baseCat\op,\posetCategory)\subseteq\cat_{\baseCat}$.
		\begin{itemize}
			\item A parametrised poset is said to be \textit{distributive} if it admits fibrewise finite products and coproducts which we write as $\wedge$ and $\vee$ respectively, and such that the canonical map $(a\wedge c)\vee(b\wedge c)\rightarrow (a\vee b)\wedge c$ is an equivalence. We will write $\distributiveInitialObject$ and $\distributiveFinalObject$ for the initial and final objects, respectively.
			\item If furthermore, for every object $a$, there exists another object $a^c$ such that $a\vee a^c\simeq \distributiveFinalObject$ and $a\wedge a^c\simeq \distributiveInitialObject$, then we say that the parametrised poset is \textit{complementable}.
		\end{itemize}
		When the context is clear, we will often omit to mention the adjective ``parametrised'' and just refer to these objects as posets.
	\end{defn}
	
	\begin{rmk}
		For simplicity we have phrased the definition above for parametrized categories over a small category $\basecat$. However, all the results in this section generalize immediately to the context of internal categories in any topos $\mathcal{B}$ in the sense of \cite{Martini2022Yoneda,MartiniWolf2024}.
	\end{rmk}
	
	\begin{rmk}
		In posets, since the space of morphisms between objects are either empty or contractible, we see that coproducts also compute pushouts, and similarly, products compute pullbacks. Hence, for example when $S$ is a finite set, in the poset of subsets $\poset(S)$, coproducts are given by unions of subsets and products are given by intersections of subsets.
	\end{rmk}
	
	\begin{rmk}\label{rmk:smashing_with_initial_object}
		For posets, we always have that $x\wedge \distributiveInitialObject = \distributiveInitialObject$ since the existence of the map $x\wedge \distributiveInitialObject\rightarrow \distributiveInitialObject$ ensures that the mapping space out of $x\wedge \distributiveInitialObject$ is nonempty, and so contractible.
	\end{rmk}
	
	\begin{lem}[Idempotence of poset (co)products]\label{lem:idempotence_poset_(co)products}
		Let $\udl{\posetExample}\in\posetCategory_{\baseCat}$. Then for any $x\in\udl{\posetExample}$, $x\wedge x \simeq x$ and $x\vee x \simeq x$, if these (co)products exist.
	\end{lem}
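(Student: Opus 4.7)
The key observation is that in a parametrised poset, all verifications reduce fibrewise to ordinary posets, where mapping spaces are either empty or contractible. Hence to establish an equivalence $a \simeq b$ between two objects, it suffices to produce a map in each direction: any two parallel maps are automatically equal, so both composites are forced to be identities.

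With this in mind, the plan is the following. For each fibre over $u \in \basecat$ and each $x \in \posetExample(u)$, I will produce maps $x \wedge x \to x$ and $x \to x \wedge x$ directly from the universal property of the binary product. The projection $\pi_1 \colon x \wedge x \to x$ is part of the defining cone. Conversely, the pair $(\id_x, \id_x)$ of maps $x \to x$ assembles, via the universal property of the product, into a unique map $\Delta \colon x \to x \wedge x$. The composite $\pi_1 \circ \Delta$ is the identity of $x$ by definition of the pairing; the composite $\Delta \circ \pi_1$ is some endomorphism of $x \wedge x$ in a poset fibre, hence equal to $\id_{x \wedge x}$ since the mapping space is contractible. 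Dually, the coproduct case produces $\nabla \colon x \vee x \to x$ from $(\id_x, \id_x)$ and uses the coproduct inclusion $\iota_1 \colon x \to x \vee x$, with the same two--step verification.

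This fibrewise equivalence is canonical, so it is automatically natural in $u \in \basecat$ (any naturality square in a parametrised poset commutes for poset reasons), giving the equivalence in $\posetCategory_{\basecat}$.

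There is essentially no obstacle here; the lemma is a direct consequence of the fact that any category whose hom spaces are $(-1)$--truncated is idempotent--complete for all binary (co)limits in a trivial way. The only mild subtlety is to remember that the statement is conditional on the existence of the (co)product in each fibre, which the hypothesis grants.
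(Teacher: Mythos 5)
Your proof is correct. The paper argues instead by Yoneda: it computes $\map_{\udl{\posetExample}}(x\vee x,y)\simeq \map_{\udl{\posetExample}}(x,y)\times\map_{\udl{\posetExample}}(x,y)\simeq\map_{\udl{\posetExample}}(x,y)$, using that $\varnothing\times\varnothing\simeq\varnothing$ and $\ast\times\ast\simeq\ast$, and reads off the equivalence $x\vee x\simeq x$ from the universal property. You instead exhibit explicit mutually inverse maps ($\Delta$ and $\pi_1$, resp.\ $\iota_1$ and $\nabla$) and invoke the rigidity of posets to see that both composites are identities. Both arguments rest on exactly the same fact --- hom spaces in a poset are $(-1)$--truncated --- so the difference is purely one of packaging: the Yoneda computation is a one--line verification of the universal property, while your version makes the section--retraction data explicit and then collapses it. Your closing remarks on fibrewise reduction and naturality are fine but not really needed, since the (co)products in a parametrised poset are fibrewise by definition and the paper's own proof does not belabour this point either.
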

	\begin{proof}
		We only do the case of coproducts, since the other dual. Let $y\in\udl{\posetExample}$. Then
		\[\map_{\udl{\posetExample}}(x\vee x,y)\simeq \map_{\udl{\posetExample}}(x,y)\times \map_{\udl{\posetExample}}(x,y)\simeq\map_{\udl{\posetExample}}(x,y) \]
		where the second equivalence is since $\varnothing\times \varnothing\simeq \varnothing$ and $\ast\times \ast \simeq \ast$ in spaces.
	\end{proof}
	
	\begin{lem}[Saturation of (co)products]\label{lem:saturation_of_(co)products}
		Let $\udl{\posetExample}\in\posetCategory_{\baseCat}$. Then for any $x,a\in \udl{\posetExample}$, $x\vee(x\wedge a)\simeq x$ and $x\wedge(x\vee a)\simeq x$, if these (co)products exist.
	\end{lem}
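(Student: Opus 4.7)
The plan is to mimic the strategy of the preceding \cref{lem:idempotence_poset_(co)products}: translate everything into a statement about mapping spaces and use that in a poset these mapping spaces are either empty or contractible, then invoke the parametrised Yoneda lemma. By duality it suffices to prove the first identity $x\vee(x\wedge a)\simeq x$; the second follows by applying the first in $\udl{\posetExample}\vop$.

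First I would compute, for an arbitrary test object $y\in\udl{\posetExample}$, the mapping space
\[\map_{\udl{\posetExample}}(x\vee(x\wedge a),y)\simeq \map_{\udl{\posetExample}}(x,y)\times \map_{\udl{\posetExample}}(x\wedge a,y)\]
using the universal property of the coproduct. The key observation is that the canonical map $x\wedge a\to x$ (which always exists since $x\wedge a$ is a product with $x$ as one factor) induces, by precomposition, a map $\map_{\udl{\posetExample}}(x,y)\to \map_{\udl{\posetExample}}(x\wedge a,y)$. In a poset these spaces are subterminal, so the existence of such a map forces $\map_{\udl{\posetExample}}(x,y)$ to be empty whenever $\map_{\udl{\posetExample}}(x\wedge a,y)$ is, and contractible-implies-contractible in the other direction is automatic from having a map into a subterminal. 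Hence the product is empty or contractible in lockstep with $\map_{\udl{\posetExample}}(x,y)$, yielding
\[\map_{\udl{\posetExample}}(x\vee(x\wedge a),y)\simeq \map_{\udl{\posetExample}}(x,y).\]

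Finally, I would invoke the Yoneda lemma fibrewise (since the statement is pointwise in $\baseCat\op$ and $\udl{\posetExample}$ is just a presheaf of posets) to conclude $x\vee(x\wedge a)\simeq x$. There is really no main obstacle here: the whole argument is a one-line computation of mapping spaces together with the trivial fact that $\ast\times\ast\simeq\ast$ and $\varnothing\times\varnothing\simeq\varnothing$, and the proof is the exact dual counterpart of \cref{lem:idempotence_poset_(co)products}. The only thing to be slightly careful about is that we are not using distributivity, merely the existence of the relevant (co)products, which matches the hypotheses of the statement.
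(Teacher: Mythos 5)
Your proposal is correct and follows essentially the same route as the paper's proof: reduce to the coproduct case by duality, expand $\map_{\udl{\posetExample}}(x\vee(x\wedge a),y)$ as a product of mapping spaces, and observe that precomposition along $x\wedge a\rightarrow x$ forces $\map_{\udl{\posetExample}}(x\wedge a,y)\simeq\ast$ whenever $\map_{\udl{\posetExample}}(x,y)\simeq\ast$, so the projection to $\map_{\udl{\posetExample}}(x,y)$ is an equivalence. Your additional remarks (the fibrewise Yoneda step and the observation that distributivity is not needed) are consistent with, and implicit in, the paper's argument.
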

	\begin{proof}
		We only do the first case, since the second is dual. Let $y\in\udl{\posetExample}$ and consider
		\[\map_{\udl{\posetExample}}(x\vee(x\wedge a),y)\simeq \map_{\udl{\posetExample}}(x,y)\times \map_{\udl{\posetExample}}(x\wedge a,y) \longrightarrow \map_{\udl{\posetExample}}(x,y).\] We argue that this map is an equivalence. If $\map_{\udl{\posetExample}}(x,y)\simeq \varnothing$, then both sides will be $\varnothing$ in spaces, and so is an equivalence in this case. If $\map_{\udl{\posetExample}}\simeq \ast$, then there exists a map $x\rightarrow y$, and so by precomposition with $x\wedge a\rightarrow x$, we also see that $\map_{\udl{\posetExample}}(x\wedge a,y)\simeq \ast$. Thus, the map of interest is an equivalence again.
	\end{proof}

	We next work out several elementary properties of complementable posets before coming to the definition of an excisable poset, which will play a central role throughout the article. The complementability assumption will allow us to decompose our posets as a product of two smaller complementable posets in very pleasant ways.

	\begin{cons}[Smashing subposet]\label{cons:smashing_subposet}
		Let $\udl{\posetExample}$ be a complementable parametrised poset and $x\in \udl{\posetExample}$. Write $\udl{\posetExample}_{x}$ for the full subcategory of $\udl{\posetExample}$ spanned by the essential image of the idempotent functor $x\wedge-\colon \udl{\posetExample}\rightarrow \udl{\posetExample}$ from \cref{lem:idempotence_poset_(co)products}. The poset $\udl{\posetExample}_x$ clearly has initial object $\distributiveInitialObject$ since $x\wedge\distributiveInitialObject\simeq \distributiveInitialObject$, and by \cref{lem:saturation_of_(co)products}, it has a final object given by $x$. Furthermore, by  distributivity,  we also see that  $\udl{\posetExample}_x\subseteq \udl{\posetExample}$ is closed under nontrivial finite (co)products, and it is immediate to check that $\udl{\posetExample}_x$ is again distributive. Hence, $\udl{\posetExample}_x$ is another complementable parametrised poset. Moreover, it is also immediate to see that $\udl{\posetExample}_x\subseteq \udl{\posetExample}$ is downward--closed.
	\end{cons}
	
	\begin{lem}[Complement decompositions]\label{lem:complement_decompositions}
		Let $\udl{\posetExample}$ be a complementable parametrised poset and $x\in \udl{\posetExample}$.  Then the functors $(x\wedge-)\times (x^c\wedge-)\colon \udl{\posetExample}\rightarrow \udl{\posetExample}_{x}\times \udl{\posetExample}_{x^c}$ and  $-\vee-\colon \udl{\posetExample}_{x}\times \udl{\posetExample}_{x^c}\rightarrow \udl{\posetExample}$ are inverse equivalences to each other.
	\end{lem}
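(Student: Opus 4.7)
The plan is to verify this fiberwise by exhibiting explicit natural isomorphisms between the two compositions and the identity, using only the algebraic identities already established for complementable posets (distributivity, idempotence, saturation, and the defining relations $x\vee x^c \simeq \distributiveFinalObject$, $x\wedge x^c \simeq \distributiveInitialObject$). Since both subposets $\udl{\posetExample}_x$ and $\udl{\posetExample}_{x^c}$ are full and both candidate functors are defined by restricting the ambient $\wedge$ and $\vee$, everything will be parametrically functorial once the pointwise identifications are given.

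For the composition $\udl{\posetExample} \to \udl{\posetExample}_x \times \udl{\posetExample}_{x^c} \to \udl{\posetExample}$, I compute using distributivity and the complement relation:
\[
(x \wedge a) \vee (x^c \wedge a) \simeq (x \vee x^c) \wedge a \simeq \distributiveFinalObject \wedge a \simeq a,
\]
and this chain of equivalences is natural in $a$.

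For the other composition, I first note that every object $b \in \udl{\posetExample}_x$ satisfies $x \wedge b \simeq b$: by construction $b$ lies in the essential image of $x \wedge -$, say $b \simeq x \wedge b'$, so idempotence of $x \wedge -$ gives $x \wedge b \simeq x \wedge x \wedge b' \simeq x \wedge b' \simeq b$. Analogously $x^c \wedge c \simeq c$ for $c \in \udl{\posetExample}_{x^c}$, and the ``cross'' terms vanish: for $c \in \udl{\posetExample}_{x^c}$ written as $c \simeq x^c \wedge c'$, we have $x \wedge c \simeq x \wedge x^c \wedge c' \simeq \distributiveInitialObject \wedge c' \simeq \distributiveInitialObject$ by \cref{rmk:smashing_with_initial_object}. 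Now distributivity yields
\[
x \wedge (b \vee c) \simeq (x \wedge b) \vee (x \wedge c) \simeq b \vee \distributiveInitialObject \simeq b,
\]
and symmetrically $x^c \wedge (b \vee c) \simeq c$, so the composition is identified with the identity on $\udl{\posetExample}_x \times \udl{\posetExample}_{x^c}$.

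The main (mild) obstacle is bookkeeping: one must verify that the equivalences are genuinely natural (not just pointwise), and that the target of $(x \wedge -) \times (x^c \wedge -)$ really lies in $\udl{\posetExample}_x \times \udl{\posetExample}_{x^c}$. The first point is automatic because all identities used are derived from universal properties of $\wedge$ and $\vee$, and the second is the defining property of $\udl{\posetExample}_x$ (and its analog for $x^c$). Parametrically over $\basecat$, each of these equivalences is compatible with the restriction functors since $\wedge$, $\vee$, and complementation are preserved by the transition maps of $\udl{\posetExample}$, so the pointwise equivalences assemble into equivalences of parametrised posets.
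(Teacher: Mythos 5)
Your proof is correct and follows essentially the same route as the paper: the paper's proof consists of exactly your first computation $(x\wedge a)\vee(x^c\wedge a)\simeq(x\vee x^c)\wedge a\simeq a$ and declares the rest immediate. Your verification of the other composite via distributivity, idempotence, and $x\wedge x^c\simeq\distributiveInitialObject$ is just that ``immediate'' part written out, so nothing differs in substance.
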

	\begin{proof}
		This is an immediate consequence of the fact that for any $a\in \udl{\posetExample}$, the natural map 
		\[(x\wedge a)\vee (x^c\wedge a) \xlongrightarrow{\simeq} (x\vee x^c)\wedge a \simeq \distributiveFinalObject\wedge a\simeq a\]
		is an equivalence.
	\end{proof}
	
	\begin{cor}[Smashing (co)localisations]\label{cor:smashing_colocalisations}
		Let $\udl{\posetExample}$ be a complementable parametrised poset and $x\in \udl{\posetExample}$. Then we have the Bousfield (co)localisations
		\begin{center}
			\begin{tikzcd}
				\udl{\posetExample }\ar[rr, "(x\wedge -)"description, two heads] && \udl{\posetExample}_x.\ar[ll, bend right =40, hook, "(\distributiveInitialObject\vee-)"'description]\ar[ll, bend right =-40, hook, "(x^c\vee-)"description]
			\end{tikzcd}
		\end{center}
	\end{cor}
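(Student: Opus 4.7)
My plan is to verify the corollary by constructing each of the two adjunctions directly, exhibiting $(x \wedge -) \colon \udl{\posetExample} \to \udl{\posetExample}_x$ as both a left and a right adjoint to a suitable fully faithful inclusion. Since we are dealing with parametrised posets, mapping spaces are fibrewise either empty or contractible, so each adjunction will amount to a pair of order comparisons that can be checked internally and will automatically be natural in $\baseCat\op$.

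I first observe that both candidate inclusions are fully faithful. The functor $(\distributiveInitialObject \vee -) \colon \udl{\posetExample}_x \to \udl{\posetExample}$ is literally the inclusion from \cref{cons:smashing_subposet}. For $(x^c \vee -)$, full faithfulness reduces to showing that $x^c \vee y \leq x^c \vee y'$ forces $y \leq y'$ whenever $y, y' \leq x$; applying $x \wedge -$ and using distributivity together with $x \wedge x^c \simeq \distributiveInitialObject$ and $x \wedge y \simeq y$ (by saturation, since $y \in \udl{\posetExample}_x$) produces exactly $y \leq y'$.

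The colocalisation $(\distributiveInitialObject \vee -) \dashv (x \wedge -)$ is then immediate: for $y \in \udl{\posetExample}_x$ and $z \in \udl{\posetExample}$, the inequality $x \wedge z \leq z$ supplies one implication, while the other uses $y \leq x$ to promote $y \leq z$ into $y \leq x \wedge z$. For the localisation $(x \wedge -) \dashv (x^c \vee -)$, given $z \in \udl{\posetExample}$ and $y \in \udl{\posetExample}_x$, the decomposition $z \simeq (x \wedge z) \vee (x^c \wedge z)$ from \cref{lem:complement_decompositions} yields $z \leq y \vee x^c \simeq x^c \vee y$ once $x \wedge z \leq y$, and conversely $x \wedge (x^c \vee y) \simeq (x \wedge x^c) \vee (x \wedge y) \simeq y$ gives $x \wedge z \leq y$ from $z \leq x^c \vee y$. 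No real obstacle arises: the whole argument is a bit of routine bookkeeping with the distributivity and complementability axioms, all expressible directly in the internal poset language.
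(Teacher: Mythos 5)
Your argument is correct, but it takes a genuinely different route from the paper's. You verify both adjunctions by hand: full faithfulness of the two sections, the comparison showing $(\distributiveInitialObject\vee-)\dashv(x\wedge-)$ via $y\le x$, and the comparison showing $(x\wedge-)\dashv(x^c\vee-)$ via the decomposition $z\simeq(x\wedge z)\vee(x^c\wedge z)$ and the identity $x\wedge(x^c\vee y)\simeq y$. This is sound, and your remark that the fibrewise checks assemble automatically is justified because in a parametrised poset the candidate units and counits are unique whenever they exist, so no coherence needs to be tracked. The paper instead argues purely formally: since $\udl{\posetExample}_{x^c}$ has an initial and a final object, the projection $\udl{\posetExample}_{x^c}\rightarrow\terminalTCat$ already sits in a Bousfield (co)localisation with sections $\distributiveInitialObject$ and $x^c$, and applying $\udl{\posetExample}_x\times-$ and transporting along the equivalence $\udl{\posetExample}\simeq\udl{\posetExample}_x\times\udl{\posetExample}_{x^c}$ of \cref{lem:complement_decompositions} turns these sections into exactly $(\distributiveInitialObject\vee-)$ and $(x^c\vee-)$ and the projection into $(x\wedge-)$. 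The paper's route buys brevity and fits its stated ``adjunction-only'' philosophy, whereas yours makes the order-theoretic content explicit and in passing re-derives the key computation $x\wedge(x^c\vee y)\simeq y$ that the paper establishes separately in \cref{lem:intersection_left_adjoint}. One small economy you could make: the explicit full-faithfulness check for $(x^c\vee-)$ is redundant once the adjunction is in place, since the counit $x\wedge(x^c\vee y)\rightarrow y$ being an equivalence already implies that the right adjoint is fully faithful.
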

	\begin{proof}
		Since $\udl{\posetExample}_{x^c}$ has an initial and final object by \cref{cons:smashing_subposet}, we have a Bousfield (co)localisation 
		\begin{center}
			\begin{tikzcd}
				\udl{\posetExample}_{x^c}\ar[rr, two heads] && \terminalTCat.\ar[ll, bend right =30, hook, "\distributiveInitialObject"']\ar[ll, bend right =-30, hook, "x^c"]
			\end{tikzcd}
		\end{center}
		Thus by applying $\udl{\posetExample}_x\times-$ to this Bousfield (co)localisation and using the decomposition from \cref{lem:complement_decompositions}, we obtain the desired adjunctions.
	\end{proof}

	\begin{lem}\label{lem:intersection_left_adjoint}
		Suppose $\udl{\posetExample}$ is a complementable parametrised poset and $x\in \udl{\posetExample}$. Then the functor $x\vee-\colon \udl{\posetExample}\rightarrow \udl{\posetExample}$ admits a left adjoint given by $x^c\wedge -$.
	\end{lem}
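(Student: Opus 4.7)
The plan is to verify the adjunction $x^c \wedge - \dashv x \vee -$ directly by checking the unit/counit inequalities, exploiting the fact that in a poset the only data of an adjunction is the equivalence of propositions $x^c \wedge a \leq b \iff a \leq x \vee b$ for all $a,b \in \udl{\posetExample}$. Since $\udl{\posetExample}$ is a parametrised poset, mapping spaces are either empty or contractible and naturality is automatic; so after fixing an object $u \in \basecat$ and working in the fibre $\posetExample(u)$, the statement reduces to this logical equivalence.

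For the forward direction, I would use the complement decomposition of Lemma \ref{lem:complement_decompositions}: for any $a$, the identity
\[ a \simeq (x \wedge a) \vee (x^c \wedge a) \]
holds. Then if $x^c \wedge a \leq b$, both $x \wedge a \leq x \leq x \vee b$ and $x^c \wedge a \leq b \leq x \vee b$, so their join $a$ is also $\leq x \vee b$. For the backward direction, suppose $a \leq x \vee b$; applying $x^c \wedge -$ (which preserves the order) and using distributivity, together with $x^c \wedge x \simeq \distributiveInitialObject$ from complementability and $\distributiveInitialObject \vee y \simeq y$, yields
\[ x^c \wedge a \;\leq\; x^c \wedge (x \vee b) \;\simeq\; (x^c \wedge x) \vee (x^c \wedge b) \;\simeq\; x^c \wedge b \;\leq\; b. \]

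A second, more structural route is also available: under the equivalence $\udl{\posetExample} \simeq \udl{\posetExample}_x \times \udl{\posetExample}_{x^c}$ of Lemma \ref{lem:complement_decompositions}, the functor $x \vee -$ corresponds to the pair (constant at the final object $x$, identity on $\udl{\posetExample}_{x^c}$), while $x^c \wedge -$ corresponds to (constant at the initial object $\distributiveInitialObject$, identity). Since on any poset with initial and final object the constant-at-initial functor is left adjoint to the constant-at-final functor, and the identity is self-adjoint, the desired adjunction is produced factor by factor. I expect no real obstacle here — the only thing to keep track of is that all constructions and equivalences are parametrised, but since the complement decomposition is itself an equivalence in $\posetCategory_{\basecat}$ (its formation is fibrewise and commutes with the transition functors, which preserve the relevant (co)products by complementability), the adjunction produced fibrewise automatically assembles into a parametrised adjunction.
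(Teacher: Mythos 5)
Your first argument is correct and is essentially the paper's proof: the paper verifies the same biconditional by computing mapping spaces, using the decomposition $a \simeq (x\wedge a)\vee(x^c\wedge a)$ for one direction and the distributivity computation $x^c\wedge(x\vee b)\simeq x^c\wedge b$ for the other, exactly as you do. Your alternative structural route via $\udl{\posetExample}\simeq \udl{\posetExample}_x\times\udl{\posetExample}_{x^c}$ is a pleasant repackaging but not needed.
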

	\begin{proof}
		Let $a,b\in \udl{\posetExample}$. First observe that the map $b\rightarrow x\vee b$ induces an equivalence
		\[\map_{\udl{\posetExample}}\big(x^c\wedge a,  b\big)\xlongrightarrow{\simeq}\map_{\udl{\posetExample}}\big(x^c\wedge a, x\vee b\big).\]
		This is because, if the source is equivalent to $\ast$, then so must the target since there are no maps from $\ast$ to $\varnothing$ in spaces; if the source is $\varnothing$, i.e. if there is no map $x^c\wedge a\rightarrow b$, then there is no map $x^c\wedge a\rightarrow x\vee b$ also, since if there were a map, then we obtain a map
		\[x^c\wedge a \simeq x^c\wedge (x^c\wedge a)\longrightarrow x^c \wedge (x\vee b) \simeq (x^c\wedge x)\vee (x^c\wedge b)\simeq x^c\wedge b\longrightarrow b\] 
		where the first equivalence is by \cref{lem:idempotence_poset_(co)products} and the second equivalence is an instance of distributivity. This contradicts the assumption.
		
		Putting these together, we compute:
		\begin{equation*}
			\begin{split}
				\map_{\udl{\posetExample}}(a, x\vee b) &\simeq \map_{\udl{\posetExample}}\big((x\wedge a)\vee (x^c\wedge a), x\vee b\big)\\
				&\simeq \map_{\udl{\posetExample}}\big(x\wedge a, x\vee b\big)\times \map_{\udl{\posetExample}}\big(x^c\wedge a, x\vee b\big)\\
				&\simeq \map_{\udl{\posetExample}}\big(x^c\wedge a, x\vee b\big)\\
				&\simeq \map_{\udl{\posetExample}}\big(x^c\wedge a,  b\big)
			\end{split}
		\end{equation*}
		where the third equivalence is since we always have a morphism $x\wedge a\rightarrow x\rightarrow x\vee b$. This completes the proof.
	\end{proof}
	
	We are now ready to define and elaborate on the properties of an excisable poset, which is supposed to axiomatise the notion of the star of an $(n+1)$--cube (i.e. the subposet of $(\Delta^1)^{\times n}$, thought of as the poset of subsets of the set with $n+1$ elements, consisting of those subsets of size at most 1) that appears in Goodwillie's definition of strongly cocartesian cubes.
	
	\begin{defn}\label{defn:excisable_poset}
		An \textit{excisable poset} is the datum of a complementable poset $\udl{\posetExample}\in\posetCategory_{\baseCat}$ together with a nonempty full subposet $\sigma\colon \subPoset{\udl{\posetExample}}\subseteq \udl{\posetExample}$ which is downward closed, i.e. for all $u\in \baseCat$, if $x\rightarrow y$ is a morphism in $\posetExample(u)$ where $y\in \subPoset{\posetExample}(u)$, then $x$ is also in $\subPoset{\posetExample}(u)$.
	\end{defn}
	
	\begin{obs}
		Note in particular that $\subPoset{\udl{\posetExample}}$ also contains an initial object.
	\end{obs}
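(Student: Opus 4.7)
The plan is to show that the fiberwise initial object $\distributiveInitialObject$ of the ambient complementable poset $\udl{\posetExample}$ lives in the subposet $\subPoset{\udl{\posetExample}}$, and then verify that it still enjoys the initiality property inside $\subPoset{\udl{\posetExample}}$. Concretely, I would fix an object $u\in\baseCat$. Since $\subPoset{\udl{\posetExample}}$ is nonempty (which I read as fiberwise nonempty, as the downward closure condition is fiberwise and the subsequent theory would be vacuous on empty fibres), choose some $y\in\subPoset{\posetExample}(u)$. The complementability of $\udl{\posetExample}$ supplies an initial object $\distributiveInitialObject\in\posetExample(u)$, hence a morphism $\distributiveInitialObject\to y$ in $\posetExample(u)$. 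Downward closure then forces $\distributiveInitialObject\in\subPoset{\posetExample}(u)$, and since $\subPoset{\udl{\posetExample}}\subseteq\udl{\posetExample}$ is full, $\distributiveInitialObject$ remains initial in $\subPoset{\posetExample}(u)$.

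To upgrade this fibrewise statement to the parametrised assertion, I would observe that the collection of fibrewise initial objects $\{\distributiveInitialObject\in\subPoset{\posetExample}(u)\}_{u\in\baseCat}$ is preserved by the restriction functors of $\subPoset{\udl{\posetExample}}$: these are restrictions of those on $\udl{\posetExample}$ (since $\sigma$ is a full inclusion), and the latter preserve the fibrewise initial objects because they preserve fibrewise coproducts (being the restrictions along $\baseCat\op\to\posetCategory$) by the distributivity structure used in \cref{defn:parametrised_posets}. Hence these fibrewise initial objects assemble into a parametrised initial object of $\subPoset{\udl{\posetExample}}$, as desired. There is no real obstacle here — the only point to flag is the interpretation of ``nonempty'' as fibrewise nonempty, which is the only reading consistent with the downward closure axiom being stated fibrewise.
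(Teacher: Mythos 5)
Your argument is exactly the one the paper leaves implicit (the observation is stated without proof): nonemptiness of a fibre gives some $y\in\subPoset{\posetExample}(u)$, the morphism $\distributiveInitialObject\to y$ together with downward closure places $\distributiveInitialObject$ in $\subPoset{\posetExample}(u)$, and fullness plus compatibility with restriction functors makes it a parametrised initial object. This is correct and matches the intended reasoning, including the (reasonable) fibrewise reading of ``nonempty''.
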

	
	\begin{lem}\label{lem:smashing_excisable_poset}
		Let $\udl{\posetExample}$ be a complementable parametrised poset, $x\in\udl{\posetExample}$, and $\subPoset{\posetExample}\subseteq \udl{\posetExample}$ a downward--closed subposet. Then writing $\subPoset{\udl{\posetExample}}_x\subseteq \udl{\posetExample}_x$ for the image of the composite $\subPoset{\udl{\posetExample}}\subseteq \udl{\posetExample}\xrightarrow{(x\wedge-)}\udl{\posetExample}_x$, we have that $\subPoset{\udl{\posetExample}}_x\subseteq \udl{\posetExample}_x$ is also downward--closed. 
	\end{lem}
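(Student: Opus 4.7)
The plan is to verify the downward-closure condition fibrewise by a short direct chase, using the fact that $\udl{\posetExample}_x \subseteq \udl{\posetExample}$ is a full subcategory on which the idempotent $x\wedge-$ restricts to the identity, together with downward-closure of $\subPoset{\udl{\posetExample}}$ in the ambient poset $\udl{\posetExample}$.

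Concretely, I would fix $u\in \baseCat$ and a morphism $a \to b$ in $\posetExample(u)_x$ with $b \in \subPoset{\posetExample}(u)_x$. By definition of $\subPoset{\udl{\posetExample}}_x$ as the image of $\subPoset{\udl{\posetExample}}\subseteq \udl{\posetExample}\xrightarrow{x\wedge-}\udl{\posetExample}_x$, I can pick some $b' \in \subPoset{\posetExample}(u)$ with $b = x \wedge b'$. Since $\udl{\posetExample}_x \subseteq \udl{\posetExample}$ is a full subcategory, the given map $a \to b$ may be viewed as a map in $\posetExample(u)$; composing with the counit-like map $b = x\wedge b' \to b'$ (which exists because in a poset the product comes equipped with its projection) then yields a map $a \to b'$ in $\posetExample(u)$.

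At this point downward-closure of $\subPoset{\udl{\posetExample}}$ in $\udl{\posetExample}$ applies directly: since $b' \in \subPoset{\posetExample}(u)$, I conclude $a \in \subPoset{\posetExample}(u)$. Finally, since $a \in \posetExample(u)_x$ was in the essential image of $x\wedge-$, we have $a \simeq x\wedge a$ by the idempotence noted in \cref{lem:idempotence_poset_(co)products}, so $a$ is exhibited as the image under $x\wedge-$ of an object already in $\subPoset{\posetExample}(u)$, hence $a \in \subPoset{\posetExample}(u)_x$, as required.

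There is no serious obstacle: the argument is essentially a one-line chase that the key construction $x\wedge-$ only shrinks posets without introducing extraneous morphisms, and downward-closure is preserved along any such restriction. The only point worth checking carefully is that a morphism $a \to x\wedge b'$ in $\posetExample(u)_x$ really lifts to a morphism $a \to b'$ in $\posetExample(u)$; but this is immediate from fullness of the inclusion together with the projection $x\wedge b' \to b'$ coming from the meet structure.
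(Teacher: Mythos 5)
Your proof is correct and follows essentially the same diagram chase as the paper: lift the morphism to the ambient poset via the projection $x\wedge b'\to b'$, apply downward-closure there, and then exhibit $a$ as $x\wedge a$. The only cosmetic difference is that the paper justifies $a\simeq x\wedge a$ by showing $x^c\wedge a=\distributiveInitialObject$ via the complement, whereas you invoke directly that objects of $\udl{\posetExample}_x$ are fixed by the idempotent $x\wedge-$; both are valid.
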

	\begin{proof}
		Let $u\in \subPoset{\udl{\posetExample}}_x$, so that $u=x\wedge m$ for some $m\in\subPoset{\udl{\posetExample}}$, and suppose we have a morphism $v\rightarrow u$ in $\udl{\posetExample}_x$. Thus, we have the morphism $v\rightarrow u=x\wedge m\rightarrow m$ in $\udl{\posetExample}$ with $m\in\subPoset{\udl{\posetExample}}$. Hence, since $\subPoset{\udl{\posetExample}}\subseteq\udl{\posetExample}$ is downward--closed, we also have that $v\in \subPoset{\udl{\posetExample}}$. Furthermore, since we have the morphism $x^c\wedge v\rightarrow x^c\wedge x\wedge m = \distributiveInitialObject$ in $\udl{\posetExample}$, we see  that $x^c\wedge v=\distributiveInitialObject$ by \cref{rmk:smashing_with_initial_object}, and so $x\wedge v\rightarrow v$ is an equivalence. All in all, we see that $v\in \subPoset{\udl{\posetExample}}_x$ as claimed.
	\end{proof}
	
	To end this subsection, we introduce the analogues of strong cocartesianness and cartesianness in our setup following \cite{stoll}, and establish some technical properties needed in the proof of \cref{alphthm:excisive_approximation_left_adjoint}.
	
	\begin{nota}
		Given a complementable parametrised poset $\udl{\posetExample}$, we let $\mathring{\sigma}\colon\udl{\puncture{\posetExample}}\subseteq \udl{\posetExample}$ be the full subposet away from the initial object. Moreover, given an excisable poset $\sigma$, we write $\initialObject{\sigma}\colon\terminalTCat\hookrightarrow \subPoset{\udl{\posetExample}}$ for the inclusion of the initial object.
	\end{nota}
	
	\begin{terminology}
		Let $f\colon \udl{I}\rightarrow \udl{J}$ be a functor and $\udl{\sC}\in\cat_{\basecat}$. We say that $\udl{\sC}$ is \textit{$f$--left--extensible (resp. $f$--right--extensible)} if the left Kan extensions (resp. right Kan extensions) along $f$ exist in $\udl{\sC}$.
	\end{terminology}
	
	\begin{defn}\label{defn:sigma_cocartesian}
		An object in $\udl{\func}(\udl{\posetExample},\udl{\sC})$ is said to be \textit{$\sigma$--cocartesian} if it is in the image of the fully faithful functor $\sigma_!\colon \udl{\func}(\subPoset{\udl{\posetExample}},\udl{\sC})\hookrightarrow \udl{\func}({\udl{\posetExample}},\udl{\sC})$. It is said to be \textit{cartesian} if it is in the image of the fully faithful functor $\puncture{\sigma}_*\colon \udl{\func}(\udl{\puncture{\posetExample}},\udl{\sC})\hookrightarrow \udl{\func}({\udl{\posetExample}},\udl{\sC})$. We write $\cocartesianFunc{\sigma}(\udl{\posetExample},\udl{\sC})$ and $\cartesianFunc(\udl{\posetExample},\udl{\sC})$ for the full subcategories of $\sigma$--cocartesian and cartesian functors, respectively.
	\end{defn}

	\begin{prop}\label{prop:squashing_preserves_cocartesianness}
		Let $\sigma$ be an excisable poset, $x\in \udl{\posetExample}$, and $D\in \cocartesianFunc{\sigma}(\udl{\posetExample},\udl{\sC})$. Then the functor $D\circ (x\vee -)\colon \udl{\posetExample}\rightarrow \udl{\sC}$ is also $\sigma$--cocartesian.
	\end{prop}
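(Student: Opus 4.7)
My plan is to verify that the counit $\sigma_! \sigma^* (D \circ (x \vee -)) \to D \circ (x \vee -)$ is an equivalence. Applying the pointwise formula for left Kan extension along the fully faithful $\sigma$, it suffices to show that at each $a \in \udl{\posetExample}$ the natural map
\[\colim_{s \in \subPoset{\udl{\posetExample}}_{/a}} D(x \vee s) \longrightarrow D(x \vee a)\]
is an equivalence. The idea is to expand both sides as colimits of $D$ itself (rather than of $D \circ (x \vee -)$) by invoking the assumed $\sigma$-cocartesianness of $D$: the target rewrites as $\colim_{t \in \subPoset{\udl{\posetExample}}_{/(x \vee a)}} D(t)$, and each summand $D(x \vee s)$ of the source rewrites similarly as $\colim_{t \in \subPoset{\udl{\posetExample}}_{/(x \vee s)}} D(t)$. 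Reorganising the resulting iterated colimit as a single colimit of $D$ over the indexing category
\[E \coloneqq \{(s, t) : s \in \subPoset{\udl{\posetExample}}_{/a},\, t \in \subPoset{\udl{\posetExample}},\, t \leq x \vee s\},\]
the comparison becomes the map on colimits induced by the projection $p \colon E \to \subPoset{\udl{\posetExample}}_{/(x \vee a)},\ (s, t) \mapsto t$ (which is well-defined since $t \leq x \vee s \leq x \vee a$).

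The remaining task is then to verify that $p$ is cofinal. The key insight, motivated by the adjunction $(x^c \wedge -) \dashv (x \vee -)$ from \cref{lem:intersection_left_adjoint}, is that for every $t_0 \in \subPoset{\udl{\posetExample}}_{/(x \vee a)}$, the comma category $t_0 \downarrow p$ admits $(x^c \wedge t_0, t_0)$ as an initial object. Membership in $E$ rests on three ingredients: downward closure of $\subPoset{\udl{\posetExample}}$ forces $x^c \wedge t_0 \in \subPoset{\udl{\posetExample}}$; the distributivity identity $x \vee (x^c \wedge t_0) = x \vee t_0$ (using $x^c \wedge x = \distributiveInitialObject$) yields $t_0 \leq x \vee (x^c \wedge t_0)$; and the inequality $x^c \wedge t_0 \leq x^c \wedge (x \vee a) = x^c \wedge a \leq a$ places it over $a$. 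Initiality then follows from the chain $x^c \wedge t_0 \leq x^c \wedge t \leq x^c \wedge (x \vee s) = x^c \wedge s \leq s$ available for any other $(s, t) \in E$ with $t_0 \leq t$. Since $t_0 \downarrow p$ has an initial object it is weakly contractible, $p$ is cofinal, and the desired equivalence of colimits follows.

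I expect the main obstacle to be the parametrised bookkeeping rather than the cofinality step itself, which is elementary once the correct candidate initial object is identified via the adjunction. Concretely, one needs to confirm that left Kan extension along our fully faithful inclusion of downward-closed parametrised posets is computed by the evident parametrised slice formula, and that cofinality of $p$ continues to hold when the slices are interpreted parametrisedly. Fortunately, all of the ingredients used --- distributivity, downward closure, and the complement $x^c$ --- are natural in the base variable, so I anticipate a fibrewise reduction over $\basecat$ suffices to conclude.
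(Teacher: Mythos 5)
Your argument is correct in its combinatorial substance, but it takes a genuinely different route from the paper's proof. The paper never invokes the pointwise formula or cofinality: it uses \cref{lem:intersection_left_adjoint} to rewrite $(x\vee-)^*$ as $(x^c\wedge-)_!$, observes that $x^c\wedge-$ restricts to an endofunctor of $\subPoset{\udl{\posetExample}}$ by downward closure, and then concludes by a purely formal chain of identifications $\sigma_!\sigma^*(x^c\wedge-)_!\sigma_!\overline{D}\simeq\sigma_!(x^c\wedge-)_!\overline{D}\simeq(x\vee-)^*\sigma_!\overline{D}$ using full faithfulness of $\sigma$. You instead unwind everything to a comparison of colimits over slice posets and prove cofinality of the projection $p$ by exhibiting $(x^c\wedge t_0,t_0)$ as an initial object of each comma category; your verifications of membership in $E$ and of initiality (via $x^c\wedge t_0\le x^c\wedge(x\vee s)=x^c\wedge s\le s$) are all correct, and it is pleasing that the same adjunction $(x^c\wedge-)\dashv(x\vee-)$ that drives the paper's proof surfaces in your guess for the initial object.

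The one place you underestimate the work is the closing paragraph. Parametrised left Kan extensions and colimits are \emph{not} computed fibrewise --- they involve indexed coproducts over the base --- so ``a fibrewise reduction over $\basecat$'' is not literally available; this is exactly why the authors announce in the introduction that they avoid cofinality arguments in favour of adjunction manipulations. Your argument can nonetheless be completed: one needs the parametrised pointwise Kan extension formula and the parametrised version of Quillen's Theorem A (both in Shah's work), together with the observation that your candidate initial object is given by a formula stable under base change and hence is a genuinely parametrised initial object of the parametrised comma category. So the gap is one of machinery rather than of ideas, but it is real machinery, and the paper's formal proof sidesteps it entirely at the cost of being less transparent about where the equivalence ``comes from''.
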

	\begin{proof}
		We need to show that the adjunction counit $\sigma_!\sigma^*(x\vee-)^*D\rightarrow (x\vee-)^*D$ is an equivalence. Recall that we had an adjunction $x^c\wedge-\colon \udl{\posetExample}\rightleftharpoons \udl{\posetExample} : x\vee-$ from \cref{lem:intersection_left_adjoint}, and so in particular, we have $(x\vee-)^*\simeq (x^c\wedge-)_!$. Next, we notice that there is a dashed factorisation in the diagram
		\begin{center}
			\begin{tikzcd}
				\udl{\posetExample} \rar["x^c\wedge-"] & \udl{\posetExample}\\
				\subPoset{\udl{\posetExample}} \uar["\sigma", hook]\rar[dashed, "x^c\wedge-"] & \subPoset{\udl{\posetExample}}.\uar["\sigma", hook]
			\end{tikzcd}
		\end{center}
		This is because for $a\in\subPoset{\udl{\posetExample}}$, the adjunction counit $x^c\wedge(x\vee a)\rightarrow a$ may be precomposed with $x^c\wedge a\rightarrow x^c\wedge(x\vee a)$ to give a map $x^c\wedge a\rightarrow a$. Hence, since $\subPoset{\udl{\posetExample}}$ was downward--closed, $x^c\wedge a$ is also in $\subPoset{\udl{\posetExample}}$. Since $D$ was $\sigma$--cocartesian, we may write $D=\sigma_!\overline{D}$. Now, we consider the computation:
		\begin{equation*}
			\begin{split}
				\sigma_!\sigma^*(x\vee-)^*D &\simeq \sigma_!\sigma^*(x\vee-)^*\sigma_!\overline{D}\\
				& \simeq \sigma_!\sigma^*(x^c\wedge-)_!\sigma_!\overline{D} \\
				&\simeq \sigma_!\sigma^*\sigma_!(x^c\wedge-)_!\overline{D} \\
				&\simeq \sigma_!(x^c\wedge-)_!\overline{D} \\
				&\simeq (x^c\wedge-)_!\sigma_!\overline{D} \\
				&\simeq (x\vee-)^*\sigma_!\overline{D} = (x\vee-)^*D
			\end{split}
		\end{equation*}
		where the fourth equivalence used that $\sigma^*\sigma_! \simeq \id$ by virtue of fully faithfulness.
	\end{proof}
	
	We now work towards the cartesian analogue of the result above.
	
	\begin{lem}\label{lem:collapse_Cartesianness}
		Let $\udl{P}, \udl{I}, \udl{\sC}\in\cat_{\baseCat}$ where $\udl{P}$ and $\udl{I}$ have initial objects,  $\udl{I}$ has a final object not equivalent to the initial object, and $\udl{\sC}$ has the appropriate Kan extensions. Then for any $F\colon \udl{P}\rightarrow \udl{\sC}$, the diagram $F\circ \pi\colon \udl{P}\times \udl{I}\xrightarrow{\pi} \udl{P}\xrightarrow{F}\udl{\sC}$ is cartesian. 
	\end{lem}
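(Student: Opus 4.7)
The plan is to show cartesianness of $F\circ\pi$ by checking that the unit of the adjunction $\mathring{\sigma}^* \dashv \mathring{\sigma}_*$ at $F\circ\pi$ is an equivalence. By fully faithfulness of $\mathring{\sigma}$, this unit is automatic on $\udl{\puncture{\udl{P}\times\udl{I}}}$. Moreover, $(\distributiveInitialObject_P, \distributiveInitialObject_I)$ is initial in $\udl{P}\times\udl{I}$, so the pointwise Kan extension formula at this point collapses to the limit over all of $\udl{\puncture{\udl{P}\times\udl{I}}}$. Hence the content of the lemma reduces to establishing the equivalence
\[F(\distributiveInitialObject_P) \xlongrightarrow{\simeq} \lim_{\udl{\puncture{\udl{P}\times\udl{I}}}} F\circ \pi.\]

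To establish this, I would introduce the restricted projection $q\coloneqq \pi|_{\udl{\puncture{\udl{P}\times\udl{I}}}}\colon \udl{\puncture{\udl{P}\times\udl{I}}} \to \udl{P}$, note that $F\circ\pi|_{\udl{\puncture{\udl{P}\times\udl{I}}}} = q^*F$, and argue that $q$ is initial (i.e.\ restriction along $q$ preserves limits). Then $\lim_{\udl{\puncture{\udl{P}\times\udl{I}}}} q^* F \simeq \lim_{\udl{P}} F \simeq F(\distributiveInitialObject_P)$, the last equivalence using the initial object of $\udl{P}$. The initiality of $q$ is checked via the standard criterion: for each $p\in\udl{P}$, the parametrised slice $q_{/p} = \{(p', i')\in \udl{\puncture{\udl{P}\times\udl{I}}} : p'\le p\}$ must be weakly contractible. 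I claim $q_{/p}$ admits a final object $(p, \ast_I)$, where $\ast_I$ denotes the parametrised final object of $\udl{I}$. Indeed, $(p, \ast_I)\not\simeq (\distributiveInitialObject_P, \distributiveInitialObject_I)$ precisely because $\ast_I \not\simeq \distributiveInitialObject_I$ by hypothesis, so $(p, \ast_I)$ lies in $\udl{\puncture{\udl{P}\times\udl{I}}}$; and for any $(p', i')$ in the slice the relation $(p', i') \le (p, \ast_I)$ holds since $p'\le p$ and $i'\le \ast_I$ as $\ast_I$ is final. A parametrised poset with a final object is weakly contractible, completing the criterion.

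The main subtlety is ensuring the initiality argument is robust in the parametrised setting: the slice $q_{/p}$, the identification of $(p, \ast_I)$ as its parametrised final object, and the implication ``final object $\Rightarrow$ weakly contractible'' must all be appropriately parametrised. However, since $\ast_I$ is $\baseCat$-natural and the relevant poset comparisons are given by the evident structure maps of $\udl{P}\times \udl{I}$, the fibrewise argument should assemble over $\baseCat$ without difficulty. Conceptually, the proof is a two-step collapse: first along the $\udl{I}$-direction via the parametrised final $\ast_I$, then along $\udl{P}$ via the parametrised initial $\distributiveInitialObject_P$.
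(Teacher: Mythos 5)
Your argument is correct, but it takes a genuinely different route from the paper's. You reduce cartesianness to the single equivalence $F(\distributiveInitialObject)\xrightarrow{\simeq}\lim_{\puncture{(\udl{P}\times\udl{I})}}q^*F$ at the initial object and then prove it by showing that the restricted projection $q\colon \puncture{(\udl{P}\times\udl{I})}\rightarrow\udl{P}$ is initial, checking that each slice $q_{/p}$ has a final object $(p,\ast_{\udl{I}})$, which lies in the punctured poset precisely because the final object of $\udl{I}$ is not its initial object. The paper instead argues purely with adjunctions: the final object of $\udl{I}$ gives $\pi$ a right adjoint $r\colon p\mapsto(p,\ast_{\udl{I}})$, which by the same hypothesis factors as $r=i\circ\overline{r}$ with $\overline{r}$ right adjoint to the restricted projection $\overline{\pi}$, whence $\pi^*F\simeq r_*F\simeq i_*\overline{r}_*F\simeq i_*\overline{\pi}^*F$ exhibits $F\circ\pi$ as right Kan extended along $i$ in one line. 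The geometric input is identical --- your final object of the slice is exactly the value of the paper's $\overline{r}$ --- but your route passes through the parametrised cofinality criterion (weak contractibility of parametrised slices) and the pointwise Kan extension formula, which is exactly the kind of machinery the paper states it is deliberately avoiding in favour of adjunction manipulations; as you acknowledge yourself, assembling the fibrewise slice argument over $\baseCat$ is the delicate point, and the adjunction proof sidesteps it entirely while also delivering the full right-Kan-extension statement with no pointwise reduction. Both proofs are valid; yours is heavier on parametrised foundations, the paper's is shorter and self-contained given \cref{lem:complement_decompositions}-style adjoint bookkeeping.
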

	\begin{proof}
		Since $\underline{I}$ has a final object, the map $\pi$ has a right adjoint $r$. Moreover because the final object is different then initial object, the projection $\overline{\pi}\colon (\udl{P}\times \udl {I})\backslash \varnothing \rightarrow \udl{P}$ also has a right adjoint $\overline{r}$. It is easy to see that we have the solid commuting diagram
		\begin{center}
			\begin{tikzcd}
				\underline{P} \dar["\overline{r}"]\rar["r",shift right = 1]&\underline{P}\times\underline{I}\lar[bend right = 40, dashed , "\pi"']\\
				(\underline{P}\times\underline{I})\backslash\varnothing\ar[ur, "i"',hook]\ar[u, bend left = 50, "\overline{\pi}",dashed]
			\end{tikzcd}
		\end{center}
		Note that $r^*\dashv\pi^*$ and $\overline{r}^*\dashv \overline{\pi}^*$. Therefore we compute that 
		\[\pi^* F \simeq r_* F \simeq i_* \overline{r}_* F \simeq i_* \overline{\pi}^* F,\] proving the claim.
		% In particular, this means that $r^*\dashv\pi^*$ and $\overline{r}^*\dashv \overline{\pi}^*$. Therefore, by considering the left adjoint diagram which commutes by the above observations, we see that the triangle 
		% \begin{center}
			%    \begin{tikzcd}
				%        \underline{\func}(\underline{P},\underline{\sC}) \dar["\overline{\pi}^*"']\rar["\pi^*"]&\underline{\func}(\underline{P}\times\underline{I},\underline{\sC})\\
				%       \underline{\func}((\underline{P}\times\underline{I})\backslash\distributiveInitialObject,\underline{\sC})\ar[ur, "i_*"',hook]
				%    \end{tikzcd}
			% \end{center}
		% also commutes, whence the claim.
	\end{proof}

	\begin{prop}\label{prop:squashing_induces_cartesianness}
		Let $\udl{\sC}\in\cat_{\baseCat}$, $\sigma$ an excisable poset, and $x\neq\distributiveInitialObject\in \udl{\posetExample}$. Then for any $D\colon \udl{\posetExample}\rightarrow\udl{\sC}$, the diagram $D\circ (x\vee-)$ is cartesian.
	\end{prop}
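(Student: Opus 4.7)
The plan is to exploit the complement decomposition of \cref{lem:complement_decompositions} to factor $x\vee-$ through a projection onto $\udl{\posetExample}_{x^c}$, and then invoke the collapse Cartesianness \cref{lem:collapse_Cartesianness}.

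First, by \cref{lem:complement_decompositions}, the functors $(x\wedge-)\times(x^c\wedge-)$ and $-\vee-$ provide mutually inverse equivalences $\udl{\posetExample}\simeq \udl{\posetExample}_x \times \udl{\posetExample}_{x^c}$. Each object $u$ of $\udl{\posetExample}_x$ is of the form $x\wedge m$, so by \cref{lem:saturation_of_(co)products} we have $x\vee u \simeq x\vee(x\wedge m)\simeq x$. Writing a general object of $\udl{\posetExample}$ as $u\vee v$ with $(u,v)\in \udl{\posetExample}_x\times\udl{\posetExample}_{x^c}$, we therefore get
\[ x\vee(u\vee v)\simeq (x\vee u)\vee v \simeq x\vee v,\]
which says that $x\vee-\colon \udl{\posetExample}\to\udl{\posetExample}$ factors, under the decomposition, as the projection $\pi\colon \udl{\posetExample}_x\times\udl{\posetExample}_{x^c}\to \udl{\posetExample}_{x^c}$ followed by the restriction $\udl{\posetExample}_{x^c}\hookrightarrow \udl{\posetExample}\xrightarrow{x\vee-}\udl{\posetExample}$. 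In particular $D\circ (x\vee-)\simeq F\circ \pi$ for the appropriate $F\colon \udl{\posetExample}_{x^c}\to\udl{\sC}$.

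Next, by \cref{cons:smashing_subposet} both $\udl{\posetExample}_x$ and $\udl{\posetExample}_{x^c}$ have initial object $\distributiveInitialObject$, and $\udl{\posetExample}_x$ has a final object $x$; since $x\neq\distributiveInitialObject$ by hypothesis, the hypotheses of \cref{lem:collapse_Cartesianness} are met with $\udl{P}=\udl{\posetExample}_{x^c}$ and $\udl{I}=\udl{\posetExample}_x$. That lemma then guarantees that $F\circ \pi$ is right Kan extended from the subposet obtained by removing $(\distributiveInitialObject,\distributiveInitialObject)$. Under the equivalence $\udl{\posetExample}\simeq \udl{\posetExample}_x\times \udl{\posetExample}_{x^c}$ this removed point is precisely $\distributiveInitialObject\in \udl{\posetExample}$, so its complement is exactly $\udl{\puncture{\posetExample}}$; thus $D\circ(x\vee-)$ lies in the image of $\puncture{\sigma}_*$, i.e. is cartesian.

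The only real content is the algebraic factorisation in the first step; the collapse Cartesianness lemma then does the rest. A minor bookkeeping point is that the statement of \cref{prop:squashing_induces_cartesianness} imposes no hypothesis on $\udl{\sC}$, whereas \cref{lem:collapse_Cartesianness} asks for ``appropriate Kan extensions''; however, inspecting that lemma's proof, the required right Kan extension is produced via restriction along an explicit right adjoint $v\mapsto (v,x)$ of the inclusion (which lands in the non-initial locus precisely because $x\neq\distributiveInitialObject$), so it exists for free regardless of $\udl{\sC}$.
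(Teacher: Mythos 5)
Your proof is correct and follows essentially the same route as the paper's: factor $x\vee-$ through the projection $\udl{\posetExample}_x\times\udl{\posetExample}_{x^c}\to\udl{\posetExample}_{x^c}$ via the complement decomposition of \cref{lem:complement_decompositions}, then apply \cref{lem:collapse_Cartesianness} using that $x\neq\distributiveInitialObject$ makes the initial and final objects of $\udl{\posetExample}_x$ distinct. Your explicit verification of the factorisation via \cref{lem:saturation_of_(co)products} and the remark on why the needed right Kan extension exists unconditionally are details the paper leaves implicit, but the argument is the same.
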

	\begin{proof}
		Using \cref{lem:complement_decompositions}, we see that the functor $(x\vee-)\colon \udl{\posetExample}\rightarrow\udl{\posetExample}$ factors as
		\[(x\vee-)\colon \udl{\posetExample}\simeq \udl{\posetExample}_{x^c}\times \udl{\posetExample}_x \xlongrightarrow{\pi} \udl{\posetExample}_{x^c} \xlongrightarrow{(x\vee-)}\udl{\posetExample}\]
		Since $x\neq \distributiveInitialObject$, $\udl{\posetExample}_x$ has initial and final objects which are not equivalent. Thus, we may apply \cref{lem:collapse_Cartesianness} to see that postcomposing with $F\colon \udl{\posetExample}\rightarrow \udl{\sC}$ yields a cartesian diagram, as wanted.
	\end{proof}

	\subsection{Face decompositions}\label{section:basicCubeYoga}\label{subsec:face_decompositions}
	
	We now axiomatise the important manoeuvre of decomposing cubes into its faces. We first work towards formulating and proving a generalisation of the classical statement (c.f. for instance \cite[Prop. 6.1.113]{lurieHA}) that if a cube is strongly cocartesian, then so are all its faces. The materials here are not needed for the proof of \cref{alphthm:excisive_approximation_left_adjoint} and the proofs are rather technical, and so the reader should feel free to skip this subsection on first reading.
	
	To define the faces of a complementable parametrised poset we require the following definition.
	
	\begin{defn}
		Given a complementable parametrised poset, we say $a,d\in \udl{L}$ are disjoint if $a\wedge d = \emptyset$. We say three elements $a,d,z\in \udl{\posetExample}$ are \textit{a decomposition of $\udl{\posetExample}$} if $a,d$ and $z$ are pairwise disjoint and $a\vee d\vee z = \distributiveFinalObject$.
	\end{defn}
	
	\begin{rmk}
		Note that given two disjoint elements $a,d \in \udl{\posetExample}$, there is a unique $z\in \udl{\posetExample}$ such that $a,d,z$ is a decomposition of $\udl{L}$. In fact, $z$ must be equal to $(a\vee d)^c$.
	\end{rmk}
	
	\begin{cons}[Face posets along triple decompositions]\label{cons:face_posets}\label{cons:intersectingSingletons}
		Let $\udl{\posetExample}$ be a complementable parametrised poset and let $a, d, z\in\udl{\posetExample}$ be a decomposition of $\udl{\posetExample}$. Thus recalling the notations from \cref{cons:smashing_subposet} and the Bousfield (co)localisation from \cref{cor:smashing_colocalisations}, we obtain the Bousfield (co)localisations
		\begin{center}
			\begin{tikzcd}
				\udl{\posetExample}\ar[rrrr, "\cap_{z}\coloneqq((a\vee d)\wedge -)"description, two heads] &&&& \udl{\posetExample}_{a\vee d}\ar[llll, bend right =40, hook, "A_z\coloneqq (\distributiveInitialObject\vee-)"'description]\ar[llll, bend right =-40, hook, "\Omega_z\coloneqq (z\vee-)"description]&&\udl{\posetExample}_{a\vee d}\ar[rrrr, "\cap\coloneqq (a\wedge -)"description, two heads] &&&& \udl{\posetExample}_a.\ar[llll, bend right =40, hook, "A\coloneqq (\distributiveInitialObject\vee-)"'description]\ar[llll, bend right =-40, hook, "\Omega\coloneqq (d\vee-)"description] 
			\end{tikzcd}
		\end{center}
		The \textit{$a$--face associated to the decomposition $a,d,z$} is defined to be the composite
		\begin{equation}\label{eqn:face_subposet}
			\begin{tikzcd}
				\Phi\coloneqq \Phi_d\coloneqq \Phi_{a,d,z}\colon \udl{\posetExample}_a \rar[hook, "\Omega"] & \udl{\posetExample}_{a\vee d} \rar[hook, "A_z"] & \udl{\posetExample}
			\end{tikzcd}
		\end{equation}
		which is a fully faithful functor. This generalises \cite[Def. 6.1.1.12]{lurieHA} to our setup.
		
	\end{cons}
	
	Futhermore, if we have an excisable structure on the original poset, then there is also an induced excisable structure on the face posets as the following point explains.
	
	\begin{cons}[Excisable structure on face posets]\label{cons:excisable_structure_on_face_posets}
		Suppose we have the structure of an excisable poset $\sigma\colon \subPoset{\udl{\posetExample}}\subseteq \udl{\posetExample}$ and suppose $a,d,z$ are a decomposition of $\udl{\posetExample}$. Then by \cref{lem:smashing_excisable_poset,cons:smashing_subposet}, we also obtain the structure of an induced excisable poset $\sigma_a\colon \subPoset{\udl{\posetExample}}_a\subseteq \udl{\posetExample}$ on $\udl{\posetExample}_a$ by taking $\subPoset{\udl{\posetExample}}_a$ to be the image of $\subPoset{\udl{\posetExample}}\subseteq \udl{\posetExample}\xrightarrow{\cap}\udl{\posetExample}_a$. Now consider the pullbacks
		\begin{equation}\label{eqn:pullback_intersecting_subposets}
			\begin{tikzcd}
				\udl{\I} \rar["j",hook]\dar["\overline{\cap}"', shift right = 1, two heads] \ar[dr, phantom , "\lrcorner"] & \udl{\posetExample}_{a\vee d}\dar["\cap"', two heads] && \udl{\J}\rar[hook, "\tau"] \dar[hook, "i"]\ar[dr, phantom , "\lrcorner"]& \udl{\posetExample}_{a\vee d} \dar["A_z", hook]\\
				\subPoset{\udl{\posetExample}}_a \rar[hook, "\sigma_a"']\ar[u, bend right = 40, shift left = 1,dashed, "\overline{\Omega}"' yshift = -6] & \udl{\posetExample}_a. \ar[u, bend right = 40, dashed, "\Omega"'] && \subPoset{\udl{\posetExample}} \rar["\sigma",hook] & \udl{\posetExample}
			\end{tikzcd}
		\end{equation}
		For the left square in \cref{eqn:pullback_intersecting_subposets}, since the  arrow $\cap$ admits a fully faithful right adjoint, by standard abstract nonsense, so does the functor $\overline{\cap}$, which we call $\overline{\Omega}$; by construction, this also satisfies that the canonical map $j\circ \overline{\Omega}\rightarrow \Omega\circ \sigma_a$ is an equivalence. For the right square, since both $\sigma\colon \subPoset{\udl{\posetExample}}\subseteq \udl{\posetExample}$ and $A_z\colon \udl{\posetExample}_{a\vee d}\subseteq \udl{\posetExample}$ are downward--closed, so is the inclusion $\tau\colon \udl{\J}\subseteq \udl{\posetExample}_{a\vee d}$, and so it provides an excisable poset structure on $\udl{\posetExample}_{a\vee d}$. 
		
		Moreover, since by definition $\subPoset{\udl{\posetExample}}\subseteq \udl{\posetExample}\xrightarrow{\cap}\udl{\posetExample}_a$ lands in $\subPoset{\udl{\posetExample}}_a$ and since $\cap\colon \udl{\posetExample}\rightarrow\udl{\posetExample}_a$ factors as $\udl{\posetExample}\xrightarrow{\cap_z}\udl{\posetExample}_{a\vee d}\xrightarrow{\overline{\cap}}\udl{\posetExample}_a$, we get a square
		\begin{center}
			\begin{tikzcd}
				\udl{\J} \rar[hook, "\tau"]\dar["\overline{\cap}"', shift right = 0, two heads] & \udl{\posetExample}_{a\vee d}\dar["\cap"', two heads]\\
				\subPoset{\udl{\posetExample}}_a \rar[hook, "\sigma_a"'] & \udl{\posetExample}_a, 
			\end{tikzcd}
		\end{center}
		whence $\tau\colon\udl{\J}\subseteq \udl{\posetExample}_{a\vee d}$ factorises via an inclusion $k\colon \udl{\J}\hookrightarrow \udl{\I}$.
	\end{cons}
	
	\begin{prop}\label{prop:faces_of_strong_cocartesian}
		Let $\udl{\sC}$ be a parametrised cocomplete category,  $\sigma \colon\subPoset{\udl{\posetExample}}\subseteq \udl{\posetExample}$ be an excisable poset, and $a\in \udl{\posetExample}$. Let $X\colon \udl{\posetExample}\rightarrow\udl{\sC}$ be a $\sigma$--cocartesian diagram. Then every $a$--face of $X$ is $\sigma_a$--cocartesian.
	\end{prop}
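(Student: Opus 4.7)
The plan is to reduce the statement to a short formal adjunction argument, bypassing the auxiliary pullback constructions $\udl{\I}$ and $\udl{\J}$ entirely. The key observation is that the face functor $\Phi = A_z\circ\Omega\colon \udl{\posetExample}_a\to\udl{\posetExample}$ admits a left adjoint: both $A_z$ and $\Omega$ are the right adjoints appearing in \cref{cor:smashing_colocalisations}, with respective left adjoints $\cap_z = ((a\vee d)\wedge-)$ and $\cap = (a\wedge-)$. Composing these yields an adjunction $\cap_a\dashv\Phi$, where $\cap_a \coloneqq \cap\circ\cap_z = (a\wedge-)\colon\udl{\posetExample}\to \udl{\posetExample}_a$.

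I then invoke the standard fact that an adjunction $f\dashv g$ of (parametrised) posets induces an equivalence $f_!\simeq g^*$ on (parametrised) functor categories, provided the relevant Kan extensions exist (which is guaranteed by the parametrised cocompleteness of $\udl{\sC}$). In our case this can also be seen by hand, since the unit of $\cap_a\dashv\Phi$ identifies $\Phi(y) = d\vee y$ as the final object of the slice $\cap_a\downarrow y$, collapsing the colimit formula for $(\cap_a)_!X(y)$ to $X(\Phi(y))=\Phi^*X(y)$. In any case, we obtain the natural equivalence
\[\Phi^* \simeq (\cap_a)_!\colon \udl{\func}(\udl{\posetExample},\udl{\sC})\longrightarrow\udl{\func}(\udl{\posetExample}_a,\udl{\sC}).\]

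With this in hand the rest is bookkeeping. Since $X$ is $\sigma$--cocartesian, we may write $X\simeq \sigma_!\bar X$ for some $\bar X\colon \subPoset{\udl{\posetExample}}\to\udl{\sC}$, and by functoriality of left Kan extension
\[\Phi^* X \simeq (\cap_a)_!\sigma_!\bar X \simeq (\cap_a\circ\sigma)_!\bar X.\]
By the very definition of $\subPoset{\udl{\posetExample}}_a$ in \cref{cons:excisable_structure_on_face_posets} as the essential image of $\cap_a\circ\sigma$, this composite factors as $\sigma_a\circ h$ for a canonical (surjective) map of parametrised posets $h\colon \subPoset{\udl{\posetExample}}\to\subPoset{\udl{\posetExample}}_a$. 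Hence $\Phi^*X \simeq (\sigma_a)_!(h_!\bar X)$, exhibiting $\Phi^*X$ as manifestly $\sigma_a$--cocartesian.

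The only conceptually nontrivial ingredient is the identification $\Phi^*\simeq (\cap_a)_!$, which I expect to be the main hurdle in making the argument completely rigorous in the parametrised context. Once granted, the conclusion is essentially immediate from the definition of the induced excisable structure on the face poset, and in particular does not require engaging with the more intricate pullback diagrams or the right adjoint $\overline{\Omega}$ introduced in \cref{cons:excisable_structure_on_face_posets}.
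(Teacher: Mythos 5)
There is a genuine gap: the claimed adjunction $\cap_a=(a\wedge-)\dashv\Phi_{a,d,z}$ is false whenever $z\neq\distributiveInitialObject$. You have read the adjunctions of \cref{cor:smashing_colocalisations} the wrong way around for $A_z$: the corollary gives $A_z\dashv\cap_z\dashv\Omega_z$, so $A_z=(\distributiveInitialObject\vee-)$ is a \emph{left} adjoint of $\cap_z$, not its right adjoint, and the composite $\Phi=A_z\circ\Omega$ is a right adjoint followed by a left adjoint, hence not formally a right adjoint of anything. Concretely, the right adjoint of $(a\wedge-)\colon\udl{\posetExample}\to\udl{\posetExample}_a$ is $(a^c\vee-)=((d\vee z)\vee-)$, which agrees with $\Phi_d=(d\vee-)$ only when $z=\distributiveInitialObject$. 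For a counterexample to your adjunction take $b=z\neq\distributiveInitialObject$ and $y=\distributiveInitialObject$: then $\map_{\udl{\posetExample}_a}(a\wedge z,\distributiveInitialObject)\simeq\map(\distributiveInitialObject,\distributiveInitialObject)\simeq\ast$, while $\map_{\udl{\posetExample}}(z,\Phi(\distributiveInitialObject))=\map(z,d)=\varnothing$ since $z\wedge d=\distributiveInitialObject$ and $z\neq\distributiveInitialObject$. For the same reason the final object of the slice $\cap_a\downarrow y$ is $a^c\vee y$ rather than $d\vee y$, so your by-hand verification fails as well. The resulting identity $\Phi^*\simeq(\cap_a)_!$ is therefore wrong: in the ordinary $3$--cube $\poset(\{1,2,3\})$ with $a=\{1\}$, $d=\{2\}$, $z=\{3\}$, one computes $((\cap_a)_!X)(\distributiveInitialObject)\simeq X(\{2,3\})$ (a colimit over the subposet of subsets not containing $1$, which has final object $\{2,3\}$), whereas $\Phi^*X(\distributiveInitialObject)=X(\{2\})$.

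What your argument does establish is the proposition for the single top face $d=a^c$, $z=\distributiveInitialObject$; this is essentially the step $\Omega^*\simeq\cap_!$ in the paper's proof, which is applied inside $\udl{\posetExample}_{a\vee d}$, where $d$ genuinely is the complement of $a$ and the adjunction $\cap\dashv\Omega$ is available. The missing content is the restriction along $A_z\colon\udl{\posetExample}_{a\vee d}\hookrightarrow\udl{\posetExample}$, which is not governed by any adjunction. The paper handles it by showing directly, using the pointwise Kan extension formula and the fact that $\udl{\posetExample}_{a\vee d}$ is downward closed in $\udl{\posetExample}$, that $Y=A_z^*X$ is cocartesian for the induced excisable structure $\tau$ of \cref{eqn:pullback_intersecting_subposets}; only then does the adjunction argument for $\Omega$ finish the job. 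That intermediate step, and hence the pullback diagrams you hoped to bypass, cannot be avoided by the adjunction formalism alone.
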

	\begin{proof}
		We borrow the notation from \cref{cons:excisable_structure_on_face_posets}. We fix the $a$-face $\Phi\colon \udl{\posetExample}_a\hookrightarrow\udl{\posetExample}$ from \cref{eqn:face_subposet} associated to the decomposition $a,d,z$ of $\udl{\posetExample}$. Our goal is to show that the functor $\Phi^*X\in \udl{\func}(\udl{\posetExample}_a,\udl{\sC})$ is $\sigma_a$-cocartesian, in other words that the map $\sigma_{a!}\sigma_a^*\Phi^*X\rightarrow\Phi^*X$ is an equivalence.
		
		First consider the right square in \cref{eqn:pullback_intersecting_subposets}. A simple computation using the formula for pointwise Kan-extensions and the fact that $\udl{\posetExample}_{a\vee d}$ is downward closed in $\udl{\posetExample}$ shows that $Y\coloneqq A_z^*X\in\udl{\func}(\udl{\posetExample}_{a\vee d},\udl{\sC})$ is $\tau$--cocartesian, i.e. the map $\tau_!\tau^*Y\rightarrow Y$ is an equivalence. Now consider the diagram
		\begin{center}
			\begin{tikzcd}
				\udl{\posetExample}_a\ar[rr, bend left = 30, "\Phi^*X"] \rar[shift right = 1,"\Omega"'] & \udl{\posetExample}_{a\vee d} \rar["Y"] \lar[shift right =1, "{\cap}"']& \udl{\sC} \\
				\subPoset{\udl{\posetExample}}_a\uar[hook, "\sigma_a"] \rar["\overline{\Omega}"', shift right = 1] & \udl{\I} \uar["j"', hook]\lar[shift right = 1, "\overline{\cap}"']\\
				& \udl{\J}\uar["k"', hook]\ar[uu, bend right = 40, "\tau"', hook]
			\end{tikzcd}
		\end{center}
		coming from \cref{cons:excisable_structure_on_face_posets}, where the squares commute. The adjunctions  induce adjunctions $\Omega^*\dashv \cap^*$ and $\overline{\Omega}^*\dashv \overline{\cap}^*$  on the respective functor categories valued in $\udl{\sC}$, and so in particular we get $\sigma_{a!}\overline{\Omega}^*\simeq \sigma_{a!}\overline{\cap}_!\simeq \cap_!j_!\simeq {\Omega}^*j_!$. Moreover, we also have 
		$j^*Y\simeq j^*\tau_!\tau^*Y\simeq j^*j_!k_!k^*j^*Y\simeq k_!k^*j^*Y$, using that $j$ is fully faithful and that $Y$ is $\tau$-cocartesian. Thus, putting everything together, we obtain an equivalence
		\[\Phi^*X\simeq \Omega^*Y\simeq \Omega^*j_!j^*Y\simeq \sigma_{a!}\overline{\Omega}^*j^*Y\simeq \sigma_{a!}\sigma_a^*\Omega^*Y\simeq \sigma_{a!}\sigma_a^*\Phi^*X,\] as was to be shown.
	\end{proof}
	
	Next, we shall prove a generalisation of the fact that if all the faces of a cube are cartesian, then so is the cube itself. We will require the following lemma.
	
	\begin{lem}\label{lem:adjoint_of_unioning_with_a_subset}
		Let $\udl{\posetExample}$ be a complementable parametrised poset and let $a,d,z\in \udl{\posetExample}$ be a decomposition of $\udl{\posetExample}$. Then the face functor $\Phi=\Phi_d$ from \cref{eqn:face_subposet} participates in a Bousfield colocalisation
		\begin{center}
			\begin{tikzcd}
				\Phi_d = d\vee(-) \colon \udl{\posetExample}_a \ar[rr, hook, shift left = 1] && \udl{\posetExample}_{d/}  \ar[ll, shift left = 1] \cocolon \cap \coloneqq a\wedge(-).
			\end{tikzcd}
		\end{center}
	\end{lem}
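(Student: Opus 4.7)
The plan is to prove the statement directly at the level of mapping spaces, exploiting the fact that in a poset these spaces are either empty or contractible, and that both the adjunction and fully faithfulness claims are ultimately driven by distributivity and disjointness in the decomposition $a,d,z$. Concretely, I will show that both $\map_{\udl{\posetExample}_{d/}}(d\vee x, y)$ and $\map_{\udl{\posetExample}_a}(x, a\wedge y)$ reduce to the single condition $x \leq y$ in $\udl{\posetExample}$, and then verify separately that the unit of $\Phi_d \dashv \cap$ is an equivalence.

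First I would check that the functors indeed land in the claimed subcategories: for $x \in \udl{\posetExample}_a$ we have $x \leq a$ and a canonical morphism $d \to d \vee x$, so $d \vee x \in \udl{\posetExample}_{d/}$; symmetrically $a\wedge y \leq a$ places $a\wedge y \in \udl{\posetExample}_a$. Next, for the adjunction, I would argue that a morphism $d\vee x \to y$ in $\udl{\posetExample}_{d/}$ amounts to $d\vee x \leq y$ in $\udl{\posetExample}$, which (since $d \leq y$ is automatic from $y \in \udl{\posetExample}_{d/}$) is equivalent to $x \leq y$. Dually, a morphism $x \to a\wedge y$ in $\udl{\posetExample}_a$ amounts to $x \leq a\wedge y$, which (since $x \leq a$ is automatic from $x \in \udl{\posetExample}_a$) is again equivalent to $x \leq y$. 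Both hom-spaces are therefore contractible precisely when $x \leq y$ and empty otherwise, which gives the required natural equivalence and hence the adjunction $\Phi_d \dashv \cap$.

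To finish I would verify that the unit $x \to \cap\Phi_d(x) = a\wedge(d\vee x)$ is an equivalence: by distributivity this equals $(a\wedge d)\vee (a\wedge x)$; the first term vanishes to $\distributiveInitialObject$ because $a,d,z$ is a decomposition and hence $a\wedge d \simeq \distributiveInitialObject$, while the second term simplifies to $x$ by \cref{cons:smashing_subposet} since $x \in \udl{\posetExample}_a$. Thus $\cap\Phi_d(x) \simeq \distributiveInitialObject \vee x \simeq x$, establishing that $\Phi_d$ is fully faithful and upgrading the adjunction to a Bousfield colocalisation.

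The argument presents no substantial obstacle; the only mild care required is in bookkeeping which parts of the defining inequalities are automatic from membership in $\udl{\posetExample}_a$ versus $\udl{\posetExample}_{d/}$, so that both sides genuinely reduce to the same condition. The parametrised enhancement is automatic, as every step above is natural in $\baseCat$ and distributivity holds fibrewise by hypothesis.
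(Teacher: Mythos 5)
Your proposal is correct and follows essentially the same route as the paper: both arguments compute that the two mapping spaces (each a priori empty or contractible) are nonempty precisely when there is a map $x\rightarrow y$ in $\udl{\posetExample}$, using that $d\leq y$ is automatic on one side and $x\leq a$ on the other. Your extra verification that the unit $x\rightarrow a\wedge(d\vee x)$ is an equivalence via distributivity and disjointness is a valid alternative to the paper's implicit appeal to $\Phi_d$ being a composite of fully faithful functors from \cref{eqn:face_subposet}.
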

	\begin{proof}
		We need to show that $\myuline{\map}_{\udl{\posetExample}_{d/}}(d\vee x, t) \simeq \myuline{\map}_{\udl{\posetExample}_a}(x, a\wedge t)$ for all $x\in \udl{\posetExample}_a$ and $t\in \udl{\posetExample}_{d/}$. Note that there is a map $d\rightarrow t$ and that $a\wedge x\rightarrow x$ is an equivalence. Therefore we can compute that both sides (which are equivalent apriori to either $\terminalTCat$ or $\udl{\distributiveInitialObject}$) are nonempty if and only if there is a map $x\rightarrow t$ in $\udl{\posetExample}$, whence the desired equivalence. 
	\end{proof}
	
	\begin{prop}\label{prop:face_of_cartesian}
		Let $\udl{\sC}$ be a parametrised cocomplete category, $\udl{\posetExample}$ be a complementable parametrised poset, and $a\in \udl{\posetExample}$. Let $X\colon \udl{\posetExample}\rightarrow\udl{\sC}$ be a diagram whose $a$--faces are all cartesian. Then $X$ is itself cartesian.
	\end{prop}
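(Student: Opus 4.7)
Writing $P\coloneqq \udl{\posetExample}_a$ and $Q\coloneqq \udl{\posetExample}_{a^c}$, the plan is to use the complementable decomposition $\udl{\posetExample}\simeq P\times Q$ to translate the face hypothesis into a slicewise cartesianness condition, and then upgrade this to cartesianness of $X$ via a pointwise Fubini-style calculation.

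First, by \cref{lem:complement_decompositions}, the join map $(p,q)\mapsto p\vee q$ gives an equivalence $\udl{\posetExample}\simeq P\times Q$. Under this equivalence, for each $d\in Q$, the $a$-face $\Phi_d$ coming from the decomposition $a,d,(a\vee d)^c$ is the slice inclusion $P\times\{d\}\hookrightarrow P\times Q$. Hence the hypothesis translates into: for each $q\in Q$, the slice $X(-,q)\colon P\to\udl{\sC}$ is cartesian, i.e.\ $X(\distributiveInitialObject,q)\simeq \lim_{p\in\puncture P}X(p,q)$, where $\puncture P\subseteq P$ and $\puncture Q\subseteq Q$ denote the subposets away from $\distributiveInitialObject$.

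To prove $X$ is cartesian we must show the unit map $X\to \mathring\sigma_*\mathring\sigma^*X$ (where $\mathring\sigma\colon \puncture{\udl{\posetExample}}\hookrightarrow \udl{\posetExample}$) is an equivalence. Pointwise this is automatic on $\puncture{\udl{\posetExample}}$, so only the equivalence $X(\distributiveInitialObject,\distributiveInitialObject)\simeq \lim_{\puncture{\udl{\posetExample}}} X$ at the initial object must be verified. I would do this by first proving that $\mathring\sigma^* X$ is right Kan extended from $\puncture P\times Q$ along the inclusion $j\colon \puncture P\times Q\hookrightarrow \puncture{\udl{\posetExample}}$. Pointwise on $\puncture P\times Q$ this is automatic by fully faithfulness of $j$; for a point $(\distributiveInitialObject,q)\in\{\distributiveInitialObject\}\times \puncture Q$ the pointwise formula gives $\lim_{(p',q')\in \puncture P\times Q_{q/}} X(p',q')$, which by Fubini on the product equals $\lim_{q'\geq q}\lim_{p'\in\puncture P}X(p',q')\simeq \lim_{q'\geq q} X(\distributiveInitialObject,q')\simeq X(\distributiveInitialObject,q)$, using the slicewise cartesian hypothesis at each $q'\geq q$ and the fact that $q$ is initial in $Q_{q/}$. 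This right Kan extension property yields $\lim_{\puncture{\udl{\posetExample}}}X\simeq \lim_{\puncture P\times Q} X$, and a further Fubini reduces the right-hand side to $\lim_{q\in Q}\lim_{p\in\puncture P}X(p,q)\simeq \lim_{q\in Q}X(\distributiveInitialObject,q)\simeq X(\distributiveInitialObject,\distributiveInitialObject)$, completing the argument.

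The only nontrivial technical ingredient is the Fubini-style interchange of parametrised limits over products of parametrised posets, which I expect to be standard in this formalism; the rest of the argument is formal manipulation with the complementable decomposition of \cref{lem:complement_decompositions} and the pointwise formulas for right Kan extension.
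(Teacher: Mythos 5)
Your argument is correct and, at its core, is the same proof as the paper's: your subposet $\puncture{P}\times Q\subseteq\puncture{\udl{\posetExample}}$ (under the identification $\udl{\posetExample}\simeq\udl{\posetExample}_a\times\udl{\posetExample}_{a^c}$ of \cref{lem:complement_decompositions}) is exactly the paper's $\udl{\M}$, the subposet of elements meeting $a$ nontrivially, and both proofs proceed in the same two steps --- first show $\mathring{\sigma}^*X$ is right Kan extended from this subposet using the faces $\Phi_d$ with $d\neq\distributiveInitialObject$, then compute the remaining limit at $\distributiveInitialObject$ using the face $\Phi_{\distributiveInitialObject}$. The difference is purely one of implementation: you identify each $a$-face $\Phi_d$ with the slice $P\times\{d\}$ and run pointwise limit formulas plus Fubini for the product $P\times Q$, whereas the paper deliberately avoids pointwise formulas and instead chains together the adjunctions $\overline{\cap}^*\dashv\overline{\varphi}^*$, ${\cap}^*\dashv\varphi^*$ from \cref{lem:adjoint_of_unioning_with_a_subset} and base-change equivalences for Kan extensions. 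Your route is more concrete and arguably easier to follow; the paper's is engineered to sidestep the one place where your argument needs extra care in the parametrised setting, namely that the ``points'' $q\in Q$ at which you slice are parametrised $B$-points for varying $B\in\basecat$, so each application of the slicewise cartesian hypothesis and each Fubini interchange must be performed after base change to $\basecat_{/B}$ (and one must take the hypothesis on $a$-faces to be stable under such base change, as the paper does implicitly via ``passing to the appropriate restrictions if necessary''). With that caveat made explicit, and granting the standard Fubini/iterated-limit equivalence $\udl{\func}(P\times Q,\udl{\sC})\simeq\udl{\func}(Q,\udl{\func}(P,\udl{\sC}))$, your proof is complete.
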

	\begin{proof}
		Recalling the notation $\mathring{\sigma}\colon \mathring{\udl{\posetExample}}\subseteq \udl{\posetExample}$ for the inclusion away from the initial object, we need to show that the canonical map $X\rightarrow \mathring{\sigma}_*\mathring{\sigma}^*X$ is an equivalence. We break up the problem into two steps, and for this, consider $\udl{\M}\subseteq \mathring{\udl{\posetExample}}$ the subposet of all objects that intersect nontrivially with $a$, i.e. the pullback
		\begin{center}
			\begin{tikzcd}
				\underline{\M} \rar["m",hook]\dar \ar[dr, phantom, very near start, "\lrcorner"] & \mathring{\udl{\posetExample}}\dar["\cap"]\\
				\mathring{\udl{\posetExample}}_a \rar["\mathring{\sigma}_a",hook] & \udl{\posetExample}_a.
			\end{tikzcd}
		\end{center}
		
		Our first claim is that $\mathring{\sigma}^*X$ is a right Kan extension of $m^*\mathring{\sigma}^*X$, i.e. the map $\mathring{\sigma}^*X\rightarrow m_*m^*\mathring{\sigma}^*X$ is an equivalence. This may be checked for every parametrised point in $\mathring{\udl{\posetExample}}$. By passing to the appropriate restrictions if necessary, without loss of generality, suppose we have a point $d\in\mathring{\udl{\posetExample}}\backslash\udl{\M}$ so that $a\wedge d=\distributiveInitialObject$. Writing $z\coloneqq (a\vee d)^c$, we thus get the triple decomposition $\distributiveFinalObject=a\vee d\vee z$ and along with it, an $a$--face $\Phi_d=\Phi_{a,d,z}\colon \udl{\posetExample}_a\hookrightarrow \udl{\posetExample}$ as in \cref{eqn:face_subposet} which sends $\distributiveInitialObject$ to $d\in\mathring{\udl{\posetExample}}$ (in particular lands in $\mathring{\udl{\posetExample}}\subseteq \udl{\posetExample}$). We will thus show that 
		\[\Phi_d^*\mathring{\sigma}^*X\longrightarrow \Phi_d^*m_*m^*\mathring{\sigma}^*X\]
		is an equivalence. To see this, consider the commuting diagram
		\begin{center}
			\begin{tikzcd}
				\underline{\M} \ar[rr,"m", hook]&& \mathring{\udl{\posetExample}}\ar[dr, "\mathring{\sigma}", hook]\\
				\underline{\M}_{d/} \ar[urr, phantom,  "\rotatebox{90}{\scalebox{1.5}{$\lrcorner$}}"]\ar[rr,"\overline{m}", hook]\dar["{\overline{\cap}}", dashed, shift left = 1] \uar["\overline{\ell}", hook]&& \mathring{\udl{\posetExample}}_{d/} \uar[hook, "\ell"] \rar["\mathring{\overline{\sigma}}", hook] \dar["{\cap}", dashed, shift left = 1]& \udl{\posetExample} \rar["X"] & \underline{\sC}\\
				\mathring{\udl{\posetExample}_a}\ar[rr,"\mathring{\sigma}_a"', hook] \uar["\overline{\varphi}", shift left = 2, hook] && \udl{\posetExample}_a \uar["\varphi", shift left = 2, hook] \ar[ur, "\Phi_d"'] 
			\end{tikzcd}
		\end{center}
		where all squares commute including the one involving the dashed maps, by construction of $\udl{\M}$, and the adjunctions are by \cref{lem:adjoint_of_unioning_with_a_subset}. Observe that since the top square is a pullback involving fully faithful functors between posets, another computation using the formula for pointwise Kan extensions yields that the canonical map $\ell^*m_*\rightarrow \overline{m}_*\overline{\ell}^*$ is an equivalence. Furthermore, we also have that $\overline{\cap}^*\dashv \overline{\varphi}^*$ and $\cap^*\dashv \varphi^*$, so that $\overline{\varphi}^*\simeq \overline{\cap}_*$ and ${\varphi}^*\simeq {\cap}_*$. Therefore, we may now compute:
		\begin{equation*}
			\begin{split}
				\Phi_d^*m_*m^*\mathring{\sigma}^*X &\simeq \varphi^*\ell^*m_*m^*\mathring{\sigma}^*X\\
				&\simeq {\varphi}^*\overline{m}_{*}\overline{\ell}^*m^*\mathring{\sigma}^*X\\
				& \simeq{\varphi}^*\overline{m}_{*}\overline{m}^*\mathring{\overline{\sigma}}^*X \\ 
				& \simeq{\cap}_{*}\overline{m}_{*}\overline{m}^*\mathring{\overline{\sigma}}^*X\\
				&\simeq \mathring{\sigma}_{a*}\overline{{\cap}}_{*}\overline{m}^*\mathring{\overline{\sigma}}^*X \\
				&\simeq \mathring{\sigma}_{a*}\overline{\varphi}^*\overline{m}^*\mathring{\overline{\sigma}}^*X\\
				&\simeq \mathring{\sigma}_{a*}\mathring{\sigma}_{a}^*\Phi_d^*X\\
				&\simeq \Phi_d^*X\\
				&\simeq \Phi_d^*\mathring{\sigma}^*X
			\end{split}
		\end{equation*}
		as desired, where the penultimate equivalence is because the $a$--face $\Phi_d$ is cartesian.
		
		For the second step, recall that ultimately we want to show that the canonical map $X \rightarrow \mathring{\sigma}_*\mathring{\sigma}^*X$ is an equivalence. But since the inclusion $\mathring{\sigma} \colon \mathring{\udl{\posetExample}}\hookrightarrow \udl{\posetExample}$ only adds the initial object $\distributiveInitialObject$, it suffices to show that $\Phi_{\distributiveInitialObject}^*X \rightarrow \Phi_{\distributiveInitialObject}^*\mathring{\sigma}_*\mathring{\sigma}^*X$ is an equivalence. In this case, we would need a similar diagram as in the previous step adjusted by the fact that the case $d =\distributiveInitialObject$ is special and slightly simpler:
		\begin{center}
			\begin{tikzcd}
				\underline{\M} \rar["m",hook] \dar["\overline{{\cap}}", dashed, shift left = 2]& \mathring{\udl{\posetExample}} \rar["\mathring{\sigma}", hook] & \udl{\posetExample} \rar["X"] \dar["{\cap}", dashed, shift left = 2]& \underline{\sC}\\
				\mathring{\udl{\posetExample}_a} \ar[rr,"\mathring{\sigma}_a"', hook] \uar["\overline{\Phi}_{\distributiveInitialObject}", shift left = 2, hook] && \udl{\posetExample}_a.\uar["{\Phi}_{\distributiveInitialObject}", shift left = 2, hook] 
			\end{tikzcd}
		\end{center}
		Similarly as in the previous step, we may now compute:
		\begin{equation*}
			\begin{split}
				\Phi_{\distributiveInitialObject}^*\mathring{\sigma}_*\mathring{\sigma}^*X &\simeq \Phi_{\distributiveInitialObject}^*\mathring{\sigma}_*m_*m^*\mathring{\sigma}^*X\\
				&\simeq \cap_{*}\mathring{\sigma}_*m_*m^*\mathring{\sigma}^*X\\
				&\simeq \mathring{\sigma}_{a*}\overline{{\cap}}_{*}m^*\mathring{\sigma}^*X\\
				&\simeq \mathring{\sigma}_{a*}\overline{{\Phi}}_{\distributiveInitialObject}^*m^*\mathring{\sigma}^*X\\
				&\simeq \mathring{\sigma}_{a*}\mathring{\sigma}_a^*\Phi_{\distributiveInitialObject}^*X\\
				&\simeq \Phi_{\distributiveInitialObject}^*X
			\end{split}
		\end{equation*}
		where the first equivalence is by the first step above, and the last equivalence is since the $a$--face $\Phi_{\distributiveInitialObject}$ is cartesian. This completes the proof of the proposition.
	\end{proof}

	\subsection{Excision}\label{subsec:excision}
	
	Armed with the requisite theory of posets, we are ready to formulate what excisiveness means in our context and work towards proving \cref{alphthm:excisive_approximation_left_adjoint} on the formula for excisive approximations. As already alluded to in the introduction, as in ordinary Goodwillie calculus, under presentability hypotheses, say, the mere existence of this approximation is always true. The real value of the theorem, which necessitates the work in this subsection, is that this approximation has an explicit formula given by \cref{cons:excisive_approximation_functors}. This will play a crucial role in our proof of \cref{mainthm:spherically_faithful_implies_semiadditivity} in \cref{section:parametrised_cubical_excision}.
	
	\begin{defn}\label{defn:excisive_functors}
		Let $F\colon \udl{\sC}\rightarrow \udl{\D}$ be a functor where $\udl{\sC}$ is $\sigma$--left--extensible and $\udl{\D}$ is $\puncture{\sigma}$--right--extensible. We say that $F$ is \textit{$\sigma$--excisive} if the diagram in $\cat_{\baseCat}$
		\begin{center}
			\begin{tikzcd}
				\udl{\func}(\udl{\posetExample},\udl{\sC})\rar["F"] & \udl{\func}(\udl{\posetExample},\udl{\D})\\
				\udl{\func}(\subPoset{\udl{\posetExample}},\udl{\sC}) \uar[hook, "\sigma_!"]\ar[r,dashed]& \udl{\func}(\udl{\puncture{\posetExample}},\udl{\sC})\uar[hook, "\puncture{\sigma}_*"']
			\end{tikzcd}
		\end{center}
		admits the dashed factorisation. In other words, it sends $\sigma$--cocartesian diagrams to cartesian ones. We denote by $\udl{\excisiveCat}^{\sigma}(\udl{\sC},\udl{\D})$ for the full subcategory of $\udl{\func}(\udl{\sC},\udl{\D})$ on the $\sigma$--excisive functors.
	\end{defn}
	
	\begin{rmk}
		Since $\puncture{\sigma}_*\colon \udl{\func}(\udl{\puncture{\posetExample}},\udl{\sC})\rightarrow \udl{\func}(\udl{\posetExample},\udl{\D})$ is fully faithful, it is really just a property that the dashed factorisation exists.
	\end{rmk}
	
	\begin{rmk}
		Note that in \cref{defn:excisive_functors} we have required the factorisation to happen in $\cat_{\baseCat}$. Thus, this notion is stable under basechange, i.e. for all $u\in\baseCat$, restriction to  $\cat_{\baseCat_{/u}}$ still yields a factorisation of $\baseCat_{/u}$--categories.
	\end{rmk}
	
	In order to state the theorem, we will need one last piece of terminology.
	
	\begin{defn}
		Let $\sigma$ be an excisable poset. A category $\udl{\D}\in\cat_{\baseCat}$ is said to be \textit{$\sigma$--differentiable} if it admits sequential colimits and $\puncture{\udl{\posetExample}}$--indexed limits, and that these commute with each other.
	\end{defn}
	
	We now begin to work on proving our first main result in earnest. We first collect some auxiliary constructions and their properties that will go into \cref{cons:excisive_approximation_functors}.

	\begin{cons}[Pre--excisive approximation functor]\label{cons:pre-excisive_approximation_functors}
		Suppose $\udl{\sC}$ has a final object. We construct a functor
		\begin{equation}\label{eqn:goodwillie_T_functor}
			\goodwillieTFunctor_{\sigma}\colon \udl{\func}(\udl{\sC},\udl{\D})\longrightarrow \udl{\func}(\udl{\sC},\udl{\D})
		\end{equation}
		To this end, first consider the functor $\padding_{\sigma}$ given by the composite of fully faithful functors
		\[\padding_{\sigma}\colon \udl{\sC}\xhookrightarrow{\initialObject{\sigma}_*} \udl{\func}(\subPoset{\udl{\posetExample}},\udl{\sC}) \xhookrightarrow[\simeq]{\sigma_!}\cocartesianFunc{\sigma}(\udl{\posetExample},\udl{\sC}).\]
		The functor in \cref{eqn:goodwillie_T_functor} is then defined as the composite 
		\[\goodwillieTFunctor_{\sigma}\colon \udl{\func}(\udl{\sC},\udl{\D}) \xlongrightarrow{(-)^{\udl{\posetExample}}}  \udl{\func}(\udl{\sC}^{\udl{\posetExample}},\udl{\D}^{\udl{\posetExample}})\xlongrightarrow{\padding_{\sigma}^*}\udl{\func}(\udl{\sC},\udl{\D}^{\udl{\posetExample}}) \xlongrightarrow{\initialObject{\sigma}^*\puncture{\sigma}_*\puncture{\sigma}^*}\udl{\func}(\udl{\sC},\udl{\D})\]
		Observe that this admits a natural transformation \begin{equation}\label{eqn:theta_transformation}
			\theta\colon \id\rightarrow \goodwillieTFunctor_{\sigma}
		\end{equation} from the identity functor induced by the adjunction unit $\eta\colon \id\rightarrow \puncture{\sigma}_*\puncture{\sigma}^*$. In more detail, for $F\in\udl{\func}(\udl{\sC},\udl{\D})$, by virtue of the diagram
		\begin{center}
			\begin{tikzcd}
				\udl{\sC} \rar["\padding_{\sigma}"] \ar[dr, equal]& \udl{\sC}^{\udl{\posetExample}} \dar["\initialObject{\sigma}^*"]\rar["F^{\udl{\posetExample}}"] & \udl{\D}^{\udl{\posetExample}} \dar["\initialObject{\sigma}^*"]\\
				& \udl{\sC} \rar["F"] & \udl{\D},
			\end{tikzcd}
		\end{center}
		the map $\theta_F\colon F\rightarrow\goodwillieTFunctor_{\sigma}F$ is given by 
		\[\theta_F\colon F \simeq \initialObject{\sigma}^*\padding_{\sigma}^*F^{\udl{\posetExample}} \xlongrightarrow{\quad\initialObject{\sigma}^*\eta_{\padding_{\sigma}^*}\quad}\initialObject{\sigma}^*\puncture{\sigma}_*\puncture{\sigma}^*\padding_{\sigma}^*F^{\udl{\posetExample}} = \goodwillieTFunctor_{\sigma}F \]
	\end{cons}

	\begin{lem}\label{lem:padding_preserves_kan_extensions_and_final_objects}
		Suppose $\udl{\sC}$ is $\sigma$--extensible and has a final object. Then the functor $\padding_{\sigma}\colon \udl{\sC}\rightarrow\udl{\func}(\udl{\posetExample},\udl{\sC})$ preserves $\sigma$--left Kan extensions and final objects.
	\end{lem}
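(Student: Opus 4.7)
The plan is to exploit the factorisation $\padding_\sigma = \sigma_! \circ \initialObject{\sigma}_*$: the first factor is a right adjoint with the explicit ``padding'' pointwise formula (value $X$ at $\distributiveInitialObject$ and $\ast$ at all other $z \in \subPoset{\udl{\posetExample}}$), while the second is a left adjoint and hence commutes with colimits. Both claims will rest on the auxiliary fact that for every $y \in \udl{\posetExample}$, the slice $\subPoset{\udl{\posetExample}}_{/y}$ contains $\distributiveInitialObject$ as a parametrised initial object (by the downward-closure hypothesis on $\subPoset{\udl{\posetExample}}$), is therefore parametrised weakly contractible, and thus $\colim_{\subPoset{\udl{\posetExample}}_{/y}} \tconstant_{\ast} \simeq \ast$.

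For the final object, $\initialObject{\sigma}_*(\ast) \simeq \tconstant_{\ast}$ because $\initialObject{\sigma}_*$ is a right adjoint. The pointwise formula for $\sigma_!$ then yields
\[ \padding_\sigma(\ast)(y) \simeq \sigma_!(\tconstant_{\ast})(y) \simeq \colim_{\subPoset{\udl{\posetExample}}_{/y}} \tconstant_{\ast} \simeq \ast, \]
so that $\padding_\sigma(\ast) \simeq \tconstant_{\ast}$ is the final object of $\udl{\func}(\udl{\posetExample}, \udl{\sC})$.

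For preservation of $\sigma$-left Kan extensions, I would fix $F \colon \subPoset{\udl{\posetExample}} \to \udl{\sC}$ and check that the canonical comparison $\sigma_!(\padding_\sigma \circ F) \to \padding_\sigma \circ \sigma_! F$ is pointwise an equivalence. Since $\sigma_!$ commutes with colimits, the left-hand side at $y \in \udl{\posetExample}$ rewrites as
\[ \sigma_!\bigl( \colim_{z \in \subPoset{\udl{\posetExample}}_{/y}} \initialObject{\sigma}_* F(z) \bigr). \]
Evaluating the inner colimit in $\udl{\func}(\subPoset{\udl{\posetExample}}, \udl{\sC})$ pointwise at each $z' \in \subPoset{\udl{\posetExample}}$ yields $\colim_z F(z) \simeq \sigma_! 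F(y)$ when $z' = \distributiveInitialObject$ and $\colim_z \ast \simeq \ast$ otherwise (again by the auxiliary fact). This matches precisely the padding $\initialObject{\sigma}_*(\sigma_! F(y))$, so applying $\sigma_!$ outside returns $\sigma_!\initialObject{\sigma}_*(\sigma_! F(y)) = \padding_\sigma(\sigma_! F(y))$, as desired.

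The only real technical point is the parametrised weak contractibility of $\subPoset{\udl{\posetExample}}_{/y}$; I expect it to be immediate from the parametrised analogue of the fact that a category with an initial object has trivial classifying space, witnessed directly by the terminal cocone on $\tconstant_{\ast}$ supplied by $\distributiveInitialObject$, though one could alternatively invoke parametrised cofinality results from the BDGNS framework if a direct verification proves awkward.
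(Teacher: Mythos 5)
Your proof is correct and follows essentially the same route as the paper's: both arguments rest on the factorisation $\padding_\sigma\simeq\sigma_!\circ\initialObject{\sigma}_*$, the pointwise formula for $\sigma_!$, and the contractibility of the slices $\subPoset{\udl{\posetExample}}_{/y}$ supplied by the initial object $\distributiveInitialObject\in\subPoset{\udl{\posetExample}}$. The only cosmetic difference is that the paper packages the Kan-extension claim as the commutativity of a two-variable Beck--Chevalley square and reduces it to the final-object statement, whereas you verify the comparison map by a direct pointwise computation; the underlying content is identical.
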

	\begin{proof}
		We first show that it preserves final objects. By construction, we know that $\padding_{\sigma}\simeq \sigma_!\circ \initialObject{\sigma}_*$, and so since $\initialObject{\sigma}_*$ preserves limits, it suffices to argue that $\sigma_!\colon \udl{\func}(\subPoset{\udl{\posetExample}},\udl{\sC})\rightarrow \udl{\func}(\udl{\posetExample},\udl{\sC})$ preserves final objects. Now, the final object in $\udl{\func}(\subPoset{\udl{\posetExample}},\udl{\sC})$ is given by $\constant_{\ast_{\sC}}$. For every $\ell\in\udl{\posetExample}$, we know that 
		\[(\sigma_!\constant_{\ast_{\sC}})(\ell) \simeq \colim\big((\sigma\downarrow \ell) \longrightarrow \subPoset{\udl{\posetExample}}\rightarrow \terminalTCat \xlongrightarrow{{\ast_{\sC}}} \udl{\sC}\big)\]
		Note that $\terminalTCat$ is a groupoid, and so the map $(\sigma\downarrow\ell)\rightarrow \terminalTCat$ factors through the localization $|\sigma\downarrow\ell|$. Because localizations are colimit cofinal we may compute and $(\sigma_!\constant_{\ast_{\sC}})(\ell)$ as a colimit over $|\sigma\downarrow \ell|$. However $|\sigma\downarrow \ell| \simeq \terminalTCat$ since $(\sigma\downarrow \ell)$ has an initial object $\distributiveInitialObject \rightarrow \ell$, and so we see that $(\sigma_!\constant_{\ast_{\sC}})(\ell) \simeq \ast_{\sC}$, as was to be shown. 
		
		Finally, to see that $\padding_{\sigma}$ preserves $\sigma$--left Kan extensions, we just have to show that the composite square in the diagram
		\begin{center}
			\begin{tikzcd}
				\udl{\func}(\udl{\posetExample},\udl{\sC}) \rar["(\id\times\initialObject{\sigma})_*"]& \udl{\func}(\udl{\posetExample}\times \subPoset{\udl{\posetExample}},\udl{\sC}) \rar["(\id\times{\sigma})_!"]& \udl{\func}(\udl{\posetExample}\times \udl{\posetExample},\udl{\sC})\\
				\udl{\func}(\subPoset{\udl{\posetExample}},\udl{\sC}) \rar["(\id\times\initialObject{\sigma})_*"]\uar["\sigma_!", hook]& \udl{\func}(\subPoset{\udl{\posetExample}}\times \subPoset{\udl{\posetExample}},\udl{\sC}) \rar["(\id\times{\sigma})_!"]\uar["(\sigma\times\id)_!", hook]& \udl{\func}(\subPoset{\udl{\posetExample}}\times\udl{\posetExample},\udl{\sC})\uar["(\sigma\times\id)_!", hook]
			\end{tikzcd}
		\end{center}
		commutes. However the right square clearly commutes, and so it suffices to show that Beck--Chevalley left square in the diagram commutes. But since $\initialObject{\sigma}_*$ is given by the final object at every point of $\subPoset{\udl{\posetExample}}$ away from $\distributiveInitialObject$, the required commutation statement is immediate from the fact that $\sigma_!$ preserves final objects, shown above.
	\end{proof}
	
	\begin{obs}\label{obs:commuting_approximations_with_exact_functors}
		Let $F\colon \udl{\D}\rightarrow \udl{\E}$ and  $G\colon \udl{\sC}\rightarrow \udl{\D}$ be functors such that $G$ preserves $\sigma$--left Kan extensions and final objects. Then the canonical maps
		\[\goodwillieTFunctor_{\sigma}(F\circ G) \rightarrow (\goodwillieTFunctor_{\sigma}F)\circ G\quad\quad\quad\goodwillieApprox_{\sigma}(F\circ G) \rightarrow (\goodwillieApprox_{\sigma}F)\circ G\]
		are equivalences.   The second equivalence follows immediately from the first equivalence. To see that $\goodwillieTFunctor_{\sigma}(F\circ G) \rightarrow (\goodwillieTFunctor_{\sigma}F)\circ G$ is an equivalence, note that $\goodwillieTFunctor_{\sigma}(F\circ G)(-) \simeq \initialObject{\sigma}^*\sigma^*\puncture{\sigma}_*\puncture{\sigma}^*(F\circ G)(\padding_{\sigma}(-)) \simeq \initialObject{\sigma}^*\sigma^*\puncture{\sigma}_*\puncture{\sigma}^*(F\padding_{\sigma}G(-))\simeq (\goodwillieTFunctor_{\sigma}F)\circ G$ where the second equivalence comes from the hypothesis on $G$.
	\end{obs}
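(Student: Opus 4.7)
The plan is to establish the equivalence for $\goodwillieTFunctor_\sigma$ first by unpacking the formula from \cref{cons:pre-excisive_approximation_functors}, and then to deduce the corresponding statement for $\goodwillieApprox_\sigma$ from the (forthcoming) construction of the excisive approximation as the sequential colimit of iterates of $\goodwillieTFunctor_\sigma$ along the natural transformation $\theta$ from \cref{eqn:theta_transformation}.

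For the first equivalence, recall that on an object $c\in\udl{\sC}$ the formula unpacks to $\goodwillieTFunctor_\sigma F(c)\simeq \initialObject{\sigma}^*\puncture{\sigma}_*\puncture{\sigma}^*(F\circ\padding_\sigma(c))$, where $F\circ\padding_\sigma(c)\colon \udl{\posetExample}\to\udl{\D}$ is viewed via postcomposition. Applying this to $F\circ G$ yields $\goodwillieTFunctor_\sigma(F\circ G)(c)\simeq \initialObject{\sigma}^*\puncture{\sigma}_*\puncture{\sigma}^*(F\circ G\circ\padding_\sigma(c))$, so the entire statement reduces to producing a natural equivalence $G\circ\padding_\sigma\simeq\padding_\sigma\circ G$. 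Since $\padding_\sigma=\sigma_!\circ\initialObject{\sigma}_*$ and the hypotheses on $G$ are precisely preservation of $\sigma$-left Kan extensions and of final objects (the latter being what $\initialObject{\sigma}_*$ records on the locus away from $\distributiveInitialObject$, as in the proof of \cref{lem:padding_preserves_kan_extensions_and_final_objects}), the desired intertwining follows by two Beck--Chevalley comparisons: preservation of the final object identifies $G\circ\initialObject{\sigma}_*\simeq\initialObject{\sigma}_*\circ G$, after which preservation of the $\sigma$-left Kan extension identifies $G\circ\sigma_!\simeq\sigma_!\circ G$ on the image of $\initialObject{\sigma}_*$.

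For the second equivalence, once $\goodwillieApprox_\sigma F$ is constructed as $\colim_n \goodwillieTFunctor_\sigma^n F$, a straightforward induction on $n$ using the first equivalence produces compatible equivalences $\goodwillieTFunctor_\sigma^n(F\circ G)\simeq (\goodwillieTFunctor_\sigma^n F)\circ G$. Passing to the sequential colimit and using that $(-)\circ G\colon \udl{\func}(\udl{\D},\udl{\E})\to\udl{\func}(\udl{\sC},\udl{\E})$ preserves all colimits (since colimits in these functor categories are computed pointwise in the target) finishes the argument.

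The main potential obstacle is verifying the commutation $G\circ\padding_\sigma\simeq\padding_\sigma\circ G$ carefully in the parametrised setting: while morally immediate from the hypotheses, one must check that the Beck--Chevalley squares relating $G$ with both $\sigma_!$ and $\initialObject{\sigma}_*$ are genuine equivalences, and in particular unwind that preservation of final objects is exactly what the latter square requires. Once these checks are in place, everything else is formal and parallels the argument of \cref{lem:padding_preserves_kan_extensions_and_final_objects}.
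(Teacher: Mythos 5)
Your proposal is correct and follows essentially the same route as the paper: the paper's one-line computation for $\goodwillieTFunctor_{\sigma}$ rests on exactly the commutation $G\circ\padding_{\sigma}\simeq\padding_{\sigma}\circ G$, which (since $\padding_{\sigma}=\sigma_!\circ\initialObject{\sigma}_*$) is what the two hypotheses on $G$ deliver, and the paper likewise deduces the $\goodwillieApprox_{\sigma}$ statement directly from the $\goodwillieTFunctor_{\sigma}$ one. You merely spell out the two Beck--Chevalley checks and the colimit-passage that the paper leaves implicit.
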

	
	Using the constructions above, we may now provide the putative excisive approximation functor.
	
	\begin{cons}[Excisive approximation functors]\label{cons:excisive_approximation_functors}
		Suppose $\udl{\sC}$ has final objects and $\udl{\D}$ has sequential colimits. We define the \textit{$\sigma$--excisive approximation functor} $\goodwillieApprox_{\sigma}\colon \udl{\func}(\udl{\sC},\udl{\D})\rightarrow \udl{\func}(\udl{\sC},\udl{\D})$ as 
		\[\goodwillieApprox_{\sigma}\coloneqq \colim\big(\id\xlongrightarrow{\theta} \goodwillieTFunctor_{\sigma}\xlongrightarrow{\theta\circ\goodwillieTFunctor_{\sigma}}\goodwillieTFunctor_{\sigma}\circ\goodwillieTFunctor_{\sigma}\xlongrightarrow{\theta\circ\goodwillieTFunctor_{\sigma}\circ\goodwillieTFunctor_{\sigma}}\cdots\big)\]
	\end{cons}

	\begin{lem}[``Rezk, {\cite[Lem. 6.1.1.26]{lurieHA}}'']\label{lem:rezk_goodwillie_lemma}
		Let $\sigma$ be an excisable poset. Let $\underline{\sC}, \udl{\D}\in\cat_{\baseCat}$ have final objects such that $\udl{\sC}$ is $\sigma$--left extensible and and $\underline{\D}$ is $\puncture{\sigma}$--right extensible. Let $F \colon \underline{\sC} \rightarrow \underline{\D}$ be a functor. Suppose $D \colon \udl{\posetExample}\rightarrow \underline{\sC}$ is $\sigma$--cocartesian. Then the canonical map $\theta_F : FD \rightarrow (\goodwillieTFunctor_{\sigma}F)(D)$ constructed above factors as  $\theta_F\colon FD\rightarrow E\rightarrow (\goodwillieTFunctor_{\sigma}F)(D)$ where  $E\in \cartesianFunc(\udl{\posetExample},\underline{\D})$.
	\end{lem}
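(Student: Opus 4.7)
The plan is to construct an intermediate cartesian diagram $E\colon \udl{\posetExample}\to \udl{\D}$ using the fibrewise join functor $\vee\colon \udl{\posetExample}\ttimes \udl{\posetExample}\to \udl{\posetExample}$. Set $\widetilde{D}\coloneqq D\circ \vee$, so that $\widetilde{D}(\ell_1,\ell_2)=D(\ell_1\vee \ell_2)$. Since $D$ is $\sigma$--cocartesian, \cref{prop:squashing_preserves_cocartesianness} ensures each fibre $\widetilde{D}(\ell,-)=D(\ell\vee -)$ is again $\sigma$--cocartesian, and shares its initial value $D(\ell)$ with $\padding_{\sigma}(D(\ell))$. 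I would then define
\[ E(\ell) \coloneqq \lim_{\ell'\in \udl{\puncture{\posetExample}}} F(D(\ell\vee\ell')),\]
formalised as the composite $\initialObject{\sigma}^*\puncture{\sigma}_*\puncture{\sigma}^*$ applied to $F\widetilde{D}(\ell,-)$ pointwise in $\ell$, with functoriality in $\ell$ coming from the first factor of $\widetilde{D}$.

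The cartesianness of $E$ amounts to a Fubini-and-cofinality argument: $\lim_{\ell\in\udl{\puncture{\posetExample}}} E(\ell)\simeq \lim_{\udl{\puncture{\posetExample}}\ttimes \udl{\puncture{\posetExample}}} F\circ D\circ \vee$, and the join functor $\vee\colon \udl{\puncture{\posetExample}}\ttimes \udl{\puncture{\posetExample}}\to \udl{\puncture{\posetExample}}$ is initial in the parametrised sense, since each slice over $\ell''$ is $\udl{\puncture{\posetExample}}_{\leq \ell''}\ttimes \udl{\puncture{\posetExample}}_{\leq \ell''}$, which has the terminal object $(\ell'',\ell'')$ and so is weakly contractible. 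This identifies the double limit with $\lim_{\ell''\in\udl{\puncture{\posetExample}}} F(D(\ell''))$, which equals $E(\distributiveInitialObject)$, giving $E\simeq \puncture{\sigma}_*\puncture{\sigma}^*E$ as required.

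The factorisation $FD\to E\to (\goodwillieTFunctor_{\sigma}F)(D)$ is then produced as follows. The first map is the universal map from the $\distributiveInitialObject$--summand to the limit, induced by $F$ applied to the structure maps $D(\ell)\to D(\ell\vee \ell')$. For the second, I would construct a natural transformation of $\sigma$--cocartesian diagrams $\widetilde{D}(\ell,-)\to \padding_{\sigma}(D(\ell))$; by fully-faithfulness of $\sigma_!$ on $\cocartesianFunc{\sigma}(\udl{\posetExample},\udl{\sC})$, such a transformation is equivalent to one between their $\sigma^*$--restrictions on $\subPoset{\udl{\posetExample}}$. Using that $\sigma^*\padding_{\sigma}(D(\ell))\simeq \initialObject{\sigma}_*D(\ell)$ is the right Kan extension sitting at $D(\ell)$ over $\distributiveInitialObject$ and the final object $\ast_{\udl{\sC}}$ elsewhere, I set the transformation to be the identity on $D(\ell)$ at $\distributiveInitialObject$ and the unique terminal map $D(\ell\vee \ell')\to \ast_{\udl{\sC}}$ at each $\ell'\in \subPoset{\udl{\posetExample}}\setminus \{\distributiveInitialObject\}$. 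Applying $F$, restricting to $\udl{\puncture{\posetExample}}$, and passing to limits yields $E\to (\goodwillieTFunctor_{\sigma}F)(D)$.

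Finally, I would check that the total composite $FD\to E\to (\goodwillieTFunctor_{\sigma}F)(D)$ agrees with $\theta_F(D)$: at each $\ell$ and each $\ell'\in\udl{\puncture{\posetExample}}$, the two limit-summands are $F$ applied respectively to $D(\ell)\to D(\ell\vee\ell')\to \padding_{\sigma}(D(\ell))(\ell')$ and to the direct structure map $D(\ell) = \padding_{\sigma}(D(\ell))(\distributiveInitialObject)\to\padding_{\sigma}(D(\ell))(\ell')$, and these agree by the naturality square of $\widetilde{D}(\ell,-)\to \padding_{\sigma}(D(\ell))$ at the unique arrow $\distributiveInitialObject\to \ell'$ in $\udl{\posetExample}$. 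The main subtlety I anticipate is carrying out the initiality argument rigorously in the parametrised setting and bookkeeping the naturality in $\ell$ throughout the Kan extension manipulations.
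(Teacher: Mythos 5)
Your proposal is correct and, at its core, follows the same route as the paper: you build the same intermediate object $E(\ell)=\lim_{\ell'\in\puncture{\udl{\posetExample}}}F(D(\ell\vee\ell'))$ (the paper packages it as $(\id\times\sigma\initialObject{\sigma})^*(\id\times\puncture{\sigma})_*(\id\times\puncture{\sigma})^*F\vee^*D$), and your two comparison maps and the final compatibility check with $\theta_F$ amount to the same manipulations, phrased via the universal property of the right Kan extension $\initialObject{\sigma}_*$ rather than by successively adjointing $\initialObject{\sigma}^*$ and $\sigma^*$ over as in the text. The one genuine divergence is how cartesianness of $E$ is established. The paper fixes $x\in\puncture{\udl{\posetExample}}$ and notes that $FD(-\vee x)$ is cartesian by \cref{prop:squashing_induces_cartesianness} (itself an adjunction argument via \cref{lem:collapse_Cartesianness}), then uses that cartesian diagrams are closed under the limit defining $E$; you instead apply Fubini and argue that $\vee\colon\puncture{\udl{\posetExample}}\times\puncture{\udl{\posetExample}}\rightarrow\puncture{\udl{\posetExample}}$ is limit--cofinal. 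Your slice computation is right, and the argument goes through, but --- as you anticipate --- it forces you to invoke the parametrised cofinality criterion: the comma categories must be parametrised weakly contractible, not merely fibrewise so, which here holds precisely because $(\ell'',\ell'')$ is a \emph{global} section that is fibrewise terminal; you also still owe the (routine) identification of the resulting equivalence of limits with the unit map $E(\distributiveInitialObject)\rightarrow\lim_{\puncture{\udl{\posetExample}}}\puncture{\sigma}^*E$. The paper's version trades all of this for the already--established \cref{prop:squashing_induces_cartesianness}, consistent with its stated preference for adjunction arguments over comma--category cofinality; your version is a perfectly serviceable alternative once the parametrised cofinality machinery is granted.
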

	\begin{proof}
		We first make three observations:
		\begin{enumerate}[label=(\alph*)]
			\item By construction, the functor $\padding_{\sigma}\colon\udl{\sC}\rightarrow\udl{\sC}^{\udl{\posetExample}}$ satisfies that the adjunction unit  $\sigma^*\padding_{\sigma}\rightarrow \initialObject{\sigma}_*\initialObject{\sigma}^*\sigma^*\padding_{\sigma}$ of functors $\udl{\sC}\rightarrow \udl{\sC}^{\subPoset{\udl{\posetExample}}}$ is an equivalence.
			
			\item Let $x\in\udl{\posetExample}$. By \cref{prop:squashing_preserves_cocartesianness}, the diagram $(x\vee-)^*D\colon \udl{\posetExample}\rightarrow \udl{\sC}$ is $\sigma$--cocartesian.

			\item Consider the composition
			\begin{equation}\label{eqn:rezk_lemma_composition}
				\udl{\func}(\udl{\posetExample}, \underline{\sC}) \xlongrightarrow{F_*} \udl{\func}(\udl{\posetExample}, \underline{\D}) \xlongrightarrow{\vee^*} \udl{\func}(\udl{\posetExample}\times \udl{\posetExample}, \underline{\D})\xlongrightarrow{(\id\times\puncture{\sigma})^*}\udl{\func}(\udl{\posetExample}\times\puncture{\udl{\posetExample}}, \underline{\D}) 
			\end{equation}
			We claim that the composition \cref{eqn:rezk_lemma_composition} factors through $\cartesianFunc(\udl{\posetExample},\udl{\func}(\puncture{\udl{\posetExample}},\udl{\D}))$. This may be checked pointwise by fixing an arbitrary $x\in \puncture{\udl{\posetExample}}$. So let $Y\in\udl{\func}(\udl{\posetExample},\udl{\sC})$. In this case, we get that $((\id\times\puncture{\sigma})^*F\vee^*Y)(x)\colon \udl{\posetExample}\rightarrow\udl{\D}$ is given by the functor $FY\circ (-\vee x)$, which is indeed cartesian by \cref{prop:squashing_induces_cartesianness} as claimed. Observe that, in particular,  $(\id\times\puncture{\sigma})_*(\id\times\puncture{\sigma})^*F\vee^*Y\in\udl{\func}(\udl{\posetExample},\udl{\func}(\udl{\posetExample},\udl{\D}))$ is in fact an object in $\cartesianFunc(\udl{\posetExample},\udl{\func}(\udl{\posetExample},\udl{\D}))$, and so evaluating at $\distributiveInitialObject$ in the second $\udl{\posetExample}$ variable gives that $(\id\times\sigma\initialObject{\sigma})^*(\id\times\puncture{\sigma})_*(\id\times\puncture{\sigma})^*F\vee^*Y\in\cartesianFunc(\udl{\posetExample},\udl{\D})$.
		\end{enumerate}
		
		Now, note that for each $x\in{\udl{\posetExample}}$, we have an identity map $\initialObject{\sigma}^*\sigma^*(x\vee-)^*D\longrightarrow \initialObject{\sigma}^*\sigma^*\padding_{\sigma}(D(x))$ on the object $D(x)\in\udl{\sC}$. Thus, adjointing $\initialObject{\sigma}^*$ over, we obtain a map $\sigma^*(x\vee-)^*D\longrightarrow \initialObject{\sigma}_*\initialObject{\sigma}^*\sigma^*\padding_{\sigma}(D(x))\simeq \sigma^*\padding_{\sigma}(D(x))$, where the equivalence is by point (a). By adjointing now $\sigma^*$ over, we obtain a map $(x\vee-)^*D \simeq \sigma_!\sigma^*(x\vee-)^*D\longrightarrow \padding_{\sigma}(D(x))$ in $\udl{\sC}^{\udl{\posetExample}}$, where the equivalence is by point (b). Letting $x\in{\udl{\posetExample}}$ vary, we obtain a morphism in $\udl{\func}({\udl{\posetExample}},\udl{\sC}^{\udl{\posetExample}})$ \[(\bullet\vee-)^*D \longrightarrow \padding_{\sigma}(D(\bullet))\]  where the $\bullet$ variable corresponds to the first ${\udl{\posetExample}}$ variable. Applying $(\id\times\puncture{\sigma})_*(\id\times\puncture{\sigma})^*F$ to this morphism and precomposing with the adjunction unit gives 
		\begin{equation*}
			F(\bullet\vee-)^*D\longrightarrow(\id\times\puncture{\sigma})_*(\id\times\puncture{\sigma})^*F\big((\bullet\vee-)^*D\big)\longrightarrow (\id\times\puncture{\sigma})_*(\id\times\puncture{\sigma})^*F\padding_{\sigma}(D(\bullet))
		\end{equation*}
		By evaluating at $\distributiveInitialObject$ in the second $\udl{\posetExample}$ variable (i.e. by applying $(\id\times \sigma\initialObject{\sigma})^*$ to this map) and defining $E(\bullet)\coloneqq (\id\times \sigma\initialObject{\sigma})^*(\id\times\puncture{\sigma})_*(\id\times\puncture{\sigma})^*F\big((\bullet\vee-)^*D\big)$, we obtain the morphism
		\[FD(\bullet) \longrightarrow E(\bullet) \longrightarrow (\goodwillieTFunctor_{\sigma}F)(D)(\bullet)\]
		in $\udl{\func}(\udl{\posetExample},\udl{\D})$, which is equivalent to $\theta_F$ from \cref{cons:excisive_approximation_functors} since $\theta_F$ was constructed using the $\puncture{\sigma}^*\dashv\puncture{\sigma}_*$ adjunction unit, which in our present two variable setting, corresponds to the adjunction unit $(\id\times\puncture{\sigma})^*\dashv(\id\times\puncture{\sigma})_*$.  Furthermore, by point (c) above, we know that $E\in\cartesianFunc(\udl{\posetExample},\udl{\D})$. This completes the proof.
	\end{proof}
	
	\begin{lem}[Goodwillie approximations are excisive, ``{\cite[Lem. 6.1.1.33]{lurieHA}}'']\label{lem:approximations_are_excisive}
		Let $\sigma$ be an excisable poset. Let $\underline{\sC}, \udl{\D}\in\cat_{\baseCat}$ have final objects such that $\udl{\sC}$ is $\sigma$--left extensible and and $\underline{\D}$ is $\puncture{\sigma}$--right extensible. Let $F \colon \underline{\sC} \rightarrow\underline{\D}$ be a functor. Then $\goodwillieApprox_{\sigma}F \colon \underline{\sC} \rightarrow \underline{\D}$ is $\sigma$--excisive.
	\end{lem}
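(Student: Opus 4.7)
The plan is to iterate Rezk's Lemma \ref{lem:rezk_goodwillie_lemma} along the telescope defining $\goodwillieApprox_{\sigma}F$ and conclude by invoking the commutation of sequential colimits with $\puncture{\udl{\posetExample}}$--indexed limits in $\udl{\D}$ (i.e.\ the $\sigma$--differentiability that is needed for $\goodwillieApprox_{\sigma}F$ to be defined in the first place), in order to recognise the resulting colimit as cartesian.

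Fix a $\sigma$--cocartesian diagram $D\colon \udl{\posetExample}\to\udl{\sC}$. By \cref{defn:excisive_functors} and \cref{defn:sigma_cocartesian} it suffices to show that $(\goodwillieApprox_{\sigma}F)(D)\in\udl{\func}(\udl{\posetExample},\udl{\D})$ is cartesian. The key observation is that $D$ itself remains $\sigma$--cocartesian, so we may apply \cref{lem:rezk_goodwillie_lemma} to each of the functors $\goodwillieTFunctor_{\sigma}^n F$. This produces, for every $n\ge 0$, a cartesian intermediate $E_n\in\cartesianFunc(\udl{\posetExample},\udl{\D})$ together with a factorisation
\begin{equation*}
(\goodwillieTFunctor_{\sigma}^n F)(D) \longrightarrow E_n \longrightarrow (\goodwillieTFunctor_{\sigma}^{n+1} F)(D)
\end{equation*}
of the canonical map $\theta_{\goodwillieTFunctor_{\sigma}^n F}(D)$. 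Inspecting the explicit formula
\begin{equation*}
E_n(\bullet) = (\id\times\sigma\initialObject{\sigma})^*(\id\times\puncture{\sigma})_*(\id\times\puncture{\sigma})^*(\goodwillieTFunctor_{\sigma}^n F)\big((\bullet\vee-)^*D\big)
\end{equation*}
extracted from the proof of Rezk's lemma, one sees that the factorisation is natural in the functor being approximated. Hence these data splice into a single zig-zag $\mathbb{N}$--indexed diagram $F(D)\to E_0\to (\goodwillieTFunctor_{\sigma}F)(D)\to E_1\to (\goodwillieTFunctor_{\sigma}^2 F)(D)\to\cdots$ in $\udl{\func}(\udl{\posetExample},\udl{\D})$.

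A routine cofinality argument then yields $(\goodwillieApprox_{\sigma}F)(D) \simeq \colim_n (\goodwillieTFunctor_{\sigma}^n F)(D) \simeq \colim_n E_n$, since the two subsequences selecting the $(\goodwillieTFunctor_{\sigma}^n F)(D)$ and the $E_n$ respectively are both cofinal in the full zig-zag. Each $E_n$ is $\puncture{\sigma}$--right Kan extended from $\puncture{\udl{\posetExample}}$ by definition of being cartesian; by the commutation of sequential colimits with $\puncture{\udl{\posetExample}}$--indexed limits in $\udl{\D}$, the sequential colimit $\colim_n E_n$ is again $\puncture{\sigma}$--right Kan extended, and so cartesian. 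Therefore $(\goodwillieApprox_{\sigma}F)(D)$ is cartesian, as required.

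The main obstacle is the second step: ensuring the factorisations produced by Rezk's lemma can be chosen compatibly so as to form a coherent $\mathbb{N}$--shaped diagram, rather than a merely termwise collection of unrelated factorisations. Once the naturality in $F$ of the construction inside \cref{lem:rezk_goodwillie_lemma} has been recorded, the remainder of the argument is the familiar Goodwillie-style cofinality-plus-interchange-of-limits-and-colimits manoeuvre.
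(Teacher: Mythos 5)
Your proposal is correct and follows essentially the same route as the paper's own proof: apply the Rezk lemma (\cref{lem:rezk_goodwillie_lemma}) to each $\goodwillieTFunctor_{\sigma}^n F$ to interpolate cartesian objects $E_n$ into the telescope, recompute $(\goodwillieApprox_{\sigma}F)(D)$ as $\colim_n E_n$, and conclude by the commutation of sequential colimits with $\puncture{\udl{\posetExample}}$--indexed limits. Your explicit attention to splicing the termwise factorisations into a coherent $\mathbb{N}$--indexed zig-zag is a point the paper passes over silently, but it does not change the argument.
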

	\begin{proof}
		Let $D \colon \udl{\posetExample} \rightarrow \underline{\sC}$ be $\sigma$--cocartesian. We want to show that $(\goodwillieApprox_{\sigma}F)(D)$ is cartesian. By definition, we have $(\goodwillieApprox_{\sigma}F)(D)\coloneqq \colim\big(FD\rightarrow (\goodwillieTFunctor_{\sigma}F)(D)\rightarrow(\goodwillieTFunctor_{\sigma}^2F)(D)\rightarrow\cdots\big)$.
		By the Rezk \cref{lem:rezk_goodwillie_lemma}, we have a cartesian factorisation \[(\goodwillieTFunctor_{\sigma}^kF)(D) \rightarrow E_k \rightarrow (\goodwillieTFunctor_{\sigma}^{k+1}F)(D)\] so that we could alternatively have computed $(\goodwillieApprox_{\sigma}F)(D)$ as a sequential colimit of $E_0\rightarrow E_1\rightarrow E_2\rightarrow\cdots$.
		Since each $E_i$ is cartesian in $\underline{\D}$, we thus obtain by $\sigma$--differentiability of $\udl{\D}$ that the sequential colimit $(\goodwillieApprox_{\sigma}F)(D)$ is cartesian, as required.
	\end{proof}

	\begin{lem}[Idempotence of Goodwillie approximations, ``{\cite[Lem. 6.1.1.35]{lurieHA}}'']\label{lem:approximations_are_idempotent}
		Let $\sigma$ be an excisable poset. Let $\underline{\sC}, \udl{\D}\in\cat_{\baseCat}$ have final objects such that $\udl{\sC}$ is $\sigma$--left extensible and  $\underline{\D}$ is $\sigma$--differentiable. Let $F \colon \underline{\sC} \rightarrow\underline{\D}$ be a functor. Then $\goodwillieApprox_{\sigma}\theta \colon \goodwillieApprox_{\sigma}F \rightarrow \goodwillieApprox_{\sigma}\goodwillieTFunctor_{\sigma}F$ is an equivalence. Consequently, the canonical transformation $\goodwillieApprox_{\sigma}F \rightarrow\goodwillieApprox_{\sigma}\goodwillieApprox_{\sigma}F$ is an equivalence.
	\end{lem}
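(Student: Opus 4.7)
The plan is to mirror the classical argument for the analogous statement (i.e.\ the parametrised analogue of Lurie's HA 6.1.1.35) by identifying $\goodwillieApprox_\sigma \theta_F$ with the whiskered transformation $\theta_{\goodwillieApprox_\sigma F}$ using $\sigma$-differentiability, and then invoking Lemma~\ref{lem:approximations_are_excisive}.

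The three key ingredients I will assemble are:
(i) the $\sigma$-differentiability hypothesis on $\udl{\D}$, which implies that the endofunctor $\goodwillieTFunctor_\sigma$ on $\udl{\func}(\udl{\sC},\udl{\D})$ preserves sequential colimits (since this functor is built pointwise from a $\puncture{\udl{\posetExample}}$-indexed limit, which commutes with sequential colimits in $\udl{\D}$ by hypothesis);
(ii) Lemma~\ref{lem:approximations_are_excisive}, saying $\goodwillieApprox_\sigma F$ is $\sigma$-excisive; and
(iii) the fact that for any $\sigma$-excisive $G$, the natural map $\theta_G\colon G \to \goodwillieTFunctor_\sigma G$ is an equivalence --- this follows directly from unwinding the definition $\goodwillieTFunctor_\sigma G(X) \simeq \mathring{\sigma}^*\puncture{\sigma}_*\puncture{\sigma}^*(G\circ \padding_\sigma(X))$, since the diagram $G\circ \padding_\sigma(X)$ is cartesian by $\sigma$-excisiveness of $G$ applied to the $\sigma$-cocartesian diagram $\padding_\sigma(X)$.

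For the first assertion, I combine these as follows. From (i) we obtain a canonical identification
\[ \goodwillieTFunctor_\sigma \goodwillieApprox_\sigma F \;\simeq\; \colim_n \goodwillieTFunctor_\sigma^{n+1} F \;\simeq\; \goodwillieApprox_\sigma \goodwillieTFunctor_\sigma F.\]
Under this identification, $\theta_{\goodwillieApprox_\sigma F}$ is matched with $\goodwillieApprox_\sigma \theta_F$: this is a 2-categorical interchange statement saying that the two whiskerings of $\theta\colon \id\to \goodwillieTFunctor_\sigma$ with the filtered colimit $\goodwillieApprox_\sigma$ agree, which boils down to the naturality squares of $\theta$ against the structure maps $\theta_{\goodwillieTFunctor_\sigma^n F}$ of the defining sequential colimit. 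By (ii) and (iii), $\theta_{\goodwillieApprox_\sigma F}$ is an equivalence, and therefore so is $\goodwillieApprox_\sigma \theta_F$.

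For the consequence, I compute $\goodwillieApprox_\sigma \goodwillieApprox_\sigma F$ directly. Using (i) repeatedly to commute each $\goodwillieTFunctor_\sigma^m$ past the sequential colimit defining $\goodwillieApprox_\sigma F$, and then using cofinality of the diagonal $\mathbb{N}\hookrightarrow \mathbb{N}\times \mathbb{N}$, we get
\[ \goodwillieApprox_\sigma \goodwillieApprox_\sigma F \;\simeq\; \colim_m \colim_n \goodwillieTFunctor_\sigma^{m+n} F \;\simeq\; \colim_k \goodwillieTFunctor_\sigma^k F \;\simeq\; \goodwillieApprox_\sigma F, \]
and one verifies by tracking indices that the canonical map $\goodwillieApprox_\sigma F \to \goodwillieApprox_\sigma \goodwillieApprox_\sigma F$ corresponds to the identity under these equivalences. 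Alternatively, one applies the first part iteratively to obtain equivalences $\goodwillieApprox_\sigma \goodwillieTFunctor_\sigma^n F \simeq \goodwillieApprox_\sigma F$ for all $n$; then since $\goodwillieApprox_\sigma \goodwillieApprox_\sigma F = \colim_n \goodwillieTFunctor_\sigma^n \goodwillieApprox_\sigma F$ with each transition an equivalence (as each $\goodwillieTFunctor_\sigma^n \goodwillieApprox_\sigma F$ is excisive, hence $\theta$ is an equivalence on it by (iii)), the colimit is equivalent to its first term $\goodwillieApprox_\sigma F$ via the canonical map.

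The main obstacle will be the interchange identification in the first part --- morally clean but requiring care. Concretely, both $\theta_{\goodwillieApprox_\sigma F}$ and $\goodwillieApprox_\sigma\theta_F$ restrict along the cocone inclusion $\goodwillieTFunctor_\sigma^n F \to \goodwillieApprox_\sigma F$ to maps into (either version of) $\goodwillieTFunctor_\sigma\goodwillieApprox_\sigma F \simeq \goodwillieApprox_\sigma\goodwillieTFunctor_\sigma F$, and the two such restrictions differ pointwise by the maps $\theta_{\goodwillieTFunctor_\sigma^n F}$ versus $\goodwillieTFunctor_\sigma^n\theta_F$; these are identified under the canonical equivalence from (i) by the coherent naturality data of $\theta$ against the iterated transition maps. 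The rest of the proof is essentially a bookkeeping exercise.
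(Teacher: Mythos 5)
Your proposal is correct and follows essentially the same route as the paper's proof: both use $\sigma$--differentiability to commute $\goodwillieTFunctor_{\sigma}$ past the sequential colimit defining $\goodwillieApprox_{\sigma}$ (yielding $\goodwillieApprox_{\sigma}\goodwillieTFunctor_{\sigma}F\simeq \goodwillieTFunctor_{\sigma}\goodwillieApprox_{\sigma}F$), then combine \cref{lem:approximations_are_excisive} with the observation that $\theta_G$ is an equivalence for any $\sigma$--excisive $G$ (since $\padding_{\sigma}$ takes values in $\sigma$--cocartesian diagrams) to deduce that $\goodwillieApprox_{\sigma}\theta_F$ is an equivalence. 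The only cosmetic difference is that the paper phrases the commutation as $\goodwillieApprox_{\sigma}$ commuting with the $\puncture{\sigma}_*$ right Kan extension rather than $\goodwillieTFunctor_{\sigma}$ preserving sequential colimits, which amounts to the same thing.
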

	\begin{proof}
		By $\sigma$--differentiability of $\udl{\D}$, the functor $\goodwillieApprox_{\sigma}\colon \udl{\func}(\udl{\sC},\udl{\D})\rightarrow \udl{\func}(\udl{\sC},\udl{\D})$ commutes with $\sigma$--right Kan extensions, and so  the canonical map \[\goodwillieApprox_{\sigma}\goodwillieTFunctor_{\sigma}F \longrightarrow \initialObject{\sigma}^*\sigma^*\puncture{\sigma}_*\puncture{\sigma}^*\big(\goodwillieApprox_{\sigma}(F\circ \padding_{\sigma})\big)\xrightarrow{\simeq} \initialObject{\sigma}^*\sigma^*\puncture{\sigma}_*\puncture{\sigma}^*\big((\goodwillieApprox_{\sigma}F)\circ \padding_{\sigma}\big) = \goodwillieTFunctor_{\sigma}\goodwillieApprox_{\sigma}F\] is an equivalence, where the second equivalence is by \cref{lem:padding_preserves_kan_extensions_and_final_objects} and \cref{obs:commuting_approximations_with_exact_functors}. But then $\goodwillieApprox_{\sigma}F$ was $\sigma$--excisive by \cref{lem:approximations_are_excisive}, and so since $\padding_{\sigma}\in\udl{\func}(\udl{\sC},\cocartesianFunc{\sigma}(\udl{\posetExample},\udl{\sC}))$, we obtain that the canonical map 
		$\goodwillieApprox_{\sigma}F\rightarrow\initialObject{\sigma}^*\sigma^*\puncture{\sigma}_*\puncture{\sigma}^*\big((\goodwillieApprox_{\sigma}F)\circ \padding_{\sigma}\big)$ is an equivalence. Therefore, by the commuting triangle
		\begin{center}
			\begin{tikzcd}
				\goodwillieApprox_{\sigma}F \rar["\goodwillieApprox_{\sigma}\theta_F"]\ar[dr,"\canonical"', "\simeq"]& \goodwillieApprox_{\sigma}\goodwillieTFunctor_{\sigma}F \dar["\canonical","\simeq"']\\
				& \initialObject{\sigma}^*\sigma^*\puncture{\sigma}_*\puncture{\sigma}^*\big((\goodwillieApprox_{\sigma}F)\circ \padding_{\sigma}\big)
			\end{tikzcd}
		\end{center}
		we get that $\goodwillieApprox_{\sigma}\theta_F$ is an equivalence, as required.
	\end{proof}

	We are at last ready to state and prove \cref{alphthm:excisive_approximation_left_adjoint}.
	
	\begin{thm}[Excisive Bousfield localisation]\label{mainthm:excisive_approximation_left_adjoint}
		Let $\sigma\colon \subPoset{\udl{\posetExample}}\subseteq \udl{\posetExample}$ be an excisable poset. Let $\udl{\sC}$ have a final object and $\udl{\D}$ is $\sigma$--differentiable. Then the fully faithful inclusion $\udl{\excisiveCat}^{\sigma}(\udl{\sC},\udl{\D})\subseteq \udl{\func}(\udl{\sC},\udl{\D})$ admits a left adjoint given by $\goodwillieApprox_{\sigma}$. Furthermore, if $\udl{\D}$ has finite parametrised limits and they commute with sequential colimits, then $\goodwillieApprox_{\sigma}$ is even parametrised left exact.
	\end{thm}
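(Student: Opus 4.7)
The plan is to combine \cref{lem:approximations_are_excisive,lem:approximations_are_idempotent} with one small observation --- that $\sigma$--excisive functors are fixed by $\goodwillieTFunctor_{\sigma}$ --- in order to invoke the standard criterion for Bousfield localisations, and then to inspect the explicit formula of \cref{cons:pre-excisive_approximation_functors,cons:excisive_approximation_functors} for the left-exactness assertion. The single missing ingredient is the following: \emph{if $G$ is $\sigma$--excisive, then $\theta_G\colon G \to \goodwillieTFunctor_{\sigma}G$ is an equivalence.} To establish this, I would unwind \cref{cons:pre-excisive_approximation_functors}: by construction $\theta_G$ is obtained by applying $\initialObject{\sigma}^{*}$ to the adjunction unit $G^{\udl{\posetExample}}\padding_{\sigma} \to \puncture{\sigma}_{*}\puncture{\sigma}^{*} G^{\udl{\posetExample}}\padding_{\sigma}$. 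Now $\padding_{\sigma}(x)$ is $\sigma$--cocartesian by construction, and since $G$ is $\sigma$--excisive the composite $G\padding_{\sigma}(x)$ is cartesian, i.e. lies in the essential image of $\puncture{\sigma}_{*}$, so that unit is already an equivalence. Passing to the sequential colimit in the tower defining $\goodwillieApprox_{\sigma}$ then yields that the natural transformation $\eta\colon F \to \goodwillieApprox_{\sigma}F$ is an equivalence whenever $F$ is $\sigma$--excisive.

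From this the adjunction assembles formally. By \cref{lem:approximations_are_excisive}, $\goodwillieApprox_{\sigma}F$ is $\sigma$--excisive for every $F$, so the previous step provides that the whiskering $\eta_{\goodwillieApprox_{\sigma}F}\colon \goodwillieApprox_{\sigma}F \to \goodwillieApprox_{\sigma}\goodwillieApprox_{\sigma}F$ is an equivalence; meanwhile \cref{lem:approximations_are_idempotent} gives that the other whiskering $\goodwillieApprox_{\sigma}\eta_{F}$ is likewise an equivalence. Both whiskerings of the unit $\eta\colon \id \Rightarrow \goodwillieApprox_{\sigma}$ being invertible is exactly the standard criterion for a Bousfield localisation, whence the fully faithful inclusion $\udl{\excisiveCat}^{\sigma}(\udl{\sC},\udl{\D}) \subseteq \udl{\func}(\udl{\sC},\udl{\D})$ admits $\goodwillieApprox_{\sigma}$ as its left adjoint.

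For parametrised left exactness under the additional hypothesis, I would observe that \cref{cons:pre-excisive_approximation_functors} presents $\goodwillieTFunctor_{\sigma}$ as a composite of restrictions along $\padding_{\sigma}$, $\puncture{\sigma}$, and $\initialObject{\sigma}$ together with the right Kan extension $\puncture{\sigma}_{*}$, each of which preserves parametrised limits pointwise in the $\udl{\D}$ variable. Hence every iterate $\goodwillieTFunctor_{\sigma}^{k}$ preserves finite parametrised limits, and the sequential-colimit formula of \cref{cons:excisive_approximation_functors} combined with the hypothesis that sequential colimits in $\udl{\D}$ commute with finite parametrised limits then delivers that $\goodwillieApprox_{\sigma}$ preserves finite parametrised limits. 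The main obstacle in the whole programme is the unpacking of $\theta_{G}$ in the first paragraph; once that identification with the displayed adjunction unit is made, everything downstream is a formal consequence of the two preceding lemmas and the limit-preservation properties of restrictions and right Kan extensions between posets.
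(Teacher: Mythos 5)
Your proposal is correct and follows essentially the same route as the paper: the paper likewise observes that a $\sigma$--excisive $F$ is fixed by $\goodwillieTFunctor_{\sigma}$ (because $\padding_{\sigma}$ takes values in $\sigma$--cocartesian diagrams, so the $\puncture{\sigma}_*\puncture{\sigma}^*$--unit is already invertible), iterates to get $F\xrightarrow{\;\simeq\;}\goodwillieApprox_{\sigma}F$, and then verifies the Bousfield localisation criterion by combining \cref{lem:approximations_are_excisive} (for $\theta_{\goodwillieApprox_{\sigma}F}$) with \cref{lem:approximations_are_idempotent} (for $\goodwillieApprox_{\sigma}\theta_{F}$), with left exactness read off from the explicit formula in \cref{cons:excisive_approximation_functors}. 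No gaps.
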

	\begin{proof}
		By \cref{lem:approximations_are_excisive}, the functor $\goodwillieApprox_{\sigma}\colon \udl{\func}(\udl{\sC},\udl{\D})$ factors through $\udl{\excisiveCat}^{\sigma}(\udl{\sC},\udl{\D})$. On the other hand, if $F\in\udl{\func}(\udl{\sC},\udl{\D})$ were $\sigma$--excisive, then since $\padding_{\sigma}\in\udl{\func}(\udl{\sC},\cocartesianFunc{\sigma}(\udl{\posetExample},\udl{\sC}))$,  we see that $F \rightarrow \goodwillieTFunctor_{\sigma}F$ is an equivalence (which in particular implies that $\goodwillieApprox_{\sigma}F$ is also $\sigma$--excisive). Iterating this observation, we get that $F \rightarrow \goodwillieApprox_{\sigma}F$ is an equivalence.
		
		To see that it is a Bousfield localisation we just need to show that  the two maps
		\[\goodwillieApprox_{\sigma}\theta_F,\: \theta_{\goodwillieApprox_{\sigma}F} \colon \goodwillieApprox_{\sigma}F \longrightarrow \goodwillieApprox_{\sigma}\goodwillieApprox_{\sigma}F\] are equivalences. The case of $\goodwillieApprox_{\sigma}\theta_{F}$ is covered already by \cref{lem:approximations_are_idempotent}, whereas that of $\theta_{\goodwillieApprox_{\sigma}F}$ is also done since $\goodwillieApprox_{\sigma}F$ was $\sigma$--excisive. This completes the proof of the first part. The second statement about parametrised left exactness of $\goodwillieApprox_{\sigma}$ under the extra hypothesis on $\udl{\D}$ is immediate from the explicit construction of $\goodwillieApprox_{\sigma}$ given in \cref{cons:excisive_approximation_functors}.
	\end{proof}

	The next result is a generalisation of the Goodwillie calculus fact that $(n-1)$--excisive functors are automatically also $n$--excisive, and for it, recall the induced excisable poset structure $\sigma_a$ associated to an object $a\in\udl{\posetExample}$ in an excisable poset $\sigma\colon \subPoset{\udl{\posetExample}}\subseteq \udl{\posetExample}$.

	\begin{prop}\label{prop:excision_implications_along_inclusions_of_orbits}
		Let $\sigma$ be an excisable poset, $a\in\udl{\posetExample}$ an object, and $F\colon \udl{\sC}\rightarrow\udl{\D}$ a $\sigma_a$--excisive functor. Then $F$ is also $\sigma$--excisive.
	\end{prop}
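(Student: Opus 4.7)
The plan is to directly combine the two face--decomposition results from \cref{subsec:face_decompositions}, namely \cref{prop:faces_of_strong_cocartesian} and \cref{prop:face_of_cartesian}, which together say that $\sigma$--cocartesianness is detected on $a$--faces (as $\sigma_a$--cocartesianness), while cartesianness is also detected on $a$--faces. Since $\sigma_a$--excisiveness of $F$ is precisely the statement that it turns $\sigma_a$--cocartesian diagrams on $\udl{\posetExample}_a$ into cartesian ones, the result should follow with no further ingredient.

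More concretely, I would proceed as follows. Let $D\colon \udl{\posetExample}\rightarrow \udl{\sC}$ be an arbitrary $\sigma$--cocartesian diagram; the goal is to show that $F\circ D\colon \udl{\posetExample}\rightarrow \udl{\D}$ is cartesian. Fix any decomposition $a,d,z$ of $\udl{\posetExample}$ and let $\Phi=\Phi_{a,d,z}\colon \udl{\posetExample}_a\hookrightarrow \udl{\posetExample}$ be the corresponding $a$--face from \cref{cons:face_posets}. By \cref{prop:faces_of_strong_cocartesian}, the restricted diagram $\Phi^*D\in \udl{\func}(\udl{\posetExample}_a,\udl{\sC})$ is $\sigma_a$--cocartesian. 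Applying the $\sigma_a$--excisive functor $F$ and using that pullback along $\Phi$ commutes with postcomposition by $F$, we conclude that
\[
\Phi^*(F\circ D)\; \simeq\; F\circ \Phi^*D\; \in\; \udl{\func}(\udl{\posetExample}_a,\udl{\D})
\]
is cartesian. Since the decomposition $a,d,z$ was arbitrary, every $a$--face of $F\circ D$ is cartesian. Finally, \cref{prop:face_of_cartesian} promotes this to the assertion that $F\circ D$ itself is cartesian, which is exactly $\sigma$--excisiveness of $F$.

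There is no real obstacle here, as the heavy lifting has already been carried out in establishing \cref{prop:faces_of_strong_cocartesian,prop:face_of_cartesian}; this proposition is essentially a formal consequence. The one detail worth double--checking is the mild compatibility that postcomposition with $F$ commutes with $\Phi^*$ (so that $F$ being $\sigma_a$--excisive really does apply to $\Phi^*D$), but this is immediate from the fact that $\Phi^*$ is given by precomposition while postcomposition by $F$ acts on the target variable.
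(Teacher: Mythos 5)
Your proof is correct and follows exactly the same route as the paper: apply \cref{prop:faces_of_strong_cocartesian} to see that all $a$--faces of a $\sigma$--cocartesian diagram are $\sigma_a$--cocartesian, use $\sigma_a$--excisiveness of $F$ to make each face of $F\circ D$ cartesian, and conclude with \cref{prop:face_of_cartesian}. The commutation of $\Phi^*$ with postcomposition by $F$ that you flag is indeed immediate and is left implicit in the paper.
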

	\begin{proof}
		Let $X\colon \udl{\posetExample}\rightarrow\udl{\sC}$ be a $\sigma$--cocartesian diagram. By \cref{prop:faces_of_strong_cocartesian}, we know that all $a$--faces of $X$ are $\sigma_a$--cocartesian. Hence, by hypothesis, every $a$--face of $FX\colon \udl{\posetExample}\rightarrow\udl{\D}$ is $\sigma_a$--cartesian. By \cref{prop:face_of_cartesian}, this implies that $FX$ is cartesian, as required.
	\end{proof}
	
	\begin{rmk}
		As a consequence of \cref{mainthm:excisive_approximation_left_adjoint} and \cref{prop:excision_implications_along_inclusions_of_orbits}, we obtain a natural transformation $\goodwillieApprox_{\sigma}\rightarrow \goodwillieApprox_{\sigma_a}$ for every object $a\in\udl{\posetExample}$. In particular, if we have a countable number of excisable posets equipped with an object $(\sigma_i\colon \subPoset{\udl{\posetExample}}_i\subseteq \udl{\posetExample}_i, a_i\in\udl{\posetExample}_i)$ such that $\sigma_i\cong \sigma_{i+1,a_{i+1}}$, then will obtain a Goodwillie tower 
		\[\id\longrightarrow \cdots \rightarrow \goodwillieApprox_{\sigma_2} \rightarrow \goodwillieApprox_{\sigma_1}\rightarrow \goodwillieApprox_{\sigma_0}\]
		associated to this sequence of excisable posets.
	\end{rmk}

	\section{Parametrised cubical excision}\label{section:parametrised_cubical_excision}
	Our goal is to employ the abstract theory of \cref{section:excision_for_posets} to prove \cref{alphthm:spherically_faithful_implies_semiadditivity} and \cref{alphThm:sphere_invertible_equivalent_to_parametrised_stable} by specialising to the case of atomic orbital base categories. We first recall the cubical constructions from \cite[$\S3$]{kaifNoncommMotives} in \cref{subsection:recollections_parametrised_cubes} and further develop some basic properties, a crucial one being \cref{prop:productsOfStronglyCocartesianCubes} that external indexed products of singleton cocartesian cubes are again singleton cocartesian; using these cubes, we construct the parametrised spheres that will generalise the regular representation spheres from the equivariant context in \cref{subsection:parametrised_spheres}; we then justify in \cref{subsec:cubical_excision} that we may port the theory of \cref{section:excision_for_posets} to the present context of cubes  before proceeding to prove \cref{alphthm:spherically_faithful_implies_semiadditivity} and \cref{alphThm:sphere_invertible_equivalent_to_parametrised_stable} in \cref{subsec:semiadditivity_stability}.

	\subsection{Cubes}
	\label{subsection:recollections_parametrised_cubes}
	
	\begin{defn}[{\cite[Def. 4.1]{nardinExposeIV}}]\label{DefinitionAtomicOrbital}
		Let $\baseCat$ be a small category. We say that it is  \textit{atomic} if whenever we have $f : W\rightarrow V$ and $g : V \rightarrow W$ in ${\baseCat}$ such that $g\circ f$ is an equivalence, then $f$ and $g$ were already inverse equivalences; and we say that it is \textit{orbital} if the finite coproduct cocompletion $\finite_{\baseCat}$ admits finite pullbacks. 
	\end{defn}
	
	Throughout this section,  $\basecat$ will always be an atomic orbital category unless otherwise specified. For examples of such categories, we refer the reader to \cref{example:atomic_orbital}. We first repeat some cubical constructions and facts established in \cite[\textsection 3]{kaifNoncommMotives} for the reader's convenience. 
	
	\begin{nota}
		We write $\underline{\Delta}^1\in \udl{\cat}$ for the constant parametrised category with value $\Delta^1$, and we will often be vague about the base category we are parametrising over since we will often need to change base categories when we consider $\udl{\Delta}^1$.
	\end{nota}
	
	\begin{defn}\label{defn:parametrised_cubes}
		Let $w\colon W \rightarrow V$ be a morphism in $\in{\finite}_{\baseCat}$ and let $\underline{\sC}$ be a $\baseCat$--category.  By \textit{the parametrised $w$--cube} we will mean the $\baseCat_{/V}$--category $w_*\underline{\Delta}^1$ and by  \textit{a parametrised $w$--cube in $\underline{\sC}$} we will mean a $\baseCat_{/V}$--functor $Q\colon w_*\underline{\Delta}^1\rightarrow V^*\underline{\sC}$. 
	\end{defn}
	%\todo[inline]{K: I've been thinking a lot about this to no avail actually. Do we know that these cubes are parametrised finite categories (ie can they be built in finitely many steps by finite indexed coproducts and pushouts)? I tried to think of strongly finiteness of parametrised posets and alternatively, tried to use that $\func(f_*\sC,\D)=f_{\otimes}\func(\sC,\D)$ for presentable $\D$ before, but  I failed}

	\begin{obs}\label{obs:top_points_in_cubes}
		An important feature of the parametrised cubes $w_*\udl{\Delta}^1$ is that the only global points they have are the initial and final objects $\distributiveInitialObject$ and $\distributiveFinalObject$ since a map $\terminalTCat\rightarrow w_*\udl{\Delta}^1$ corresponds by adjunction to a map $\terminalTCat\rightarrow \udl{\Delta}^1$. The other parametrised points of $w_*\udl{\Delta}^1$ will thus come from $B$--points $b_!\terminalTCat\rightarrow w_*\udl{\Delta}^1$ where $b\colon B\rightarrow V$ is the unique map in $\basecat_{/V}$ (where $b\in\basecat_{/V}$  is not the final object).
	\end{obs}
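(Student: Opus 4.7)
The plan is to unpack the observation essentially by tracing through the standard induction-restriction-coinduction adjunctions recalled in the prerequisites and notations of the introduction. The core fact is that $w_* \udl{\Delta}^1$ is, by construction, a right Kan extension / coinduction, so mapping into it from the terminal object is governed by the corresponding left adjoint.

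First I would identify what a ``global point'' means here: since we are now working over $\basecat_{/V}$, it is a $\basecat_{/V}$-functor $\terminalTCat \to w_* \udl{\Delta}^1$. Using the adjunction $w^* \dashv w_*$, such a functor corresponds to a $\basecat_{/W}$-functor $w^* \terminalTCat \to \udl{\Delta}^1$. The easy input is that $w^* \terminalTCat \simeq \terminalTCat$, since restriction along any base-change sends the terminal $\basecat_{/V}$-category to the terminal $\basecat_{/W}$-category. So the datum is reduced to a $\basecat_{/W}$-functor $\terminalTCat \to \udl{\Delta}^1$, and since $\udl{\Delta}^1$ is the constant parametrised category with value $\Delta^1$, this is equivalent to a plain object of $\Delta^1$. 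There are exactly two such, and inspection shows that the vertex $0$ corresponds under these adjunction equivalences to the constant $w_*\udl{\Delta}^1$-diagram at $0$, which is the initial object $\distributiveInitialObject$ of $w_*\udl{\Delta}^1$, while $1$ gives the constant diagram at $1$, which is the final object $\distributiveFinalObject$.

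For the second sentence, which locates the other parametrised points, I would repeat the same adjunction argument one level more generally. A $B$-point $b_! \terminalTCat \to w_* \udl{\Delta}^1$ for $b\colon B \to V$ in $\basecat_{/V}$ (with $b$ not the identity) corresponds under $b_! \dashv b^*$ to a $\basecat_{/B}$-functor $\terminalTCat \to b^* w_* \udl{\Delta}^1$. One may then apply base-change (using orbitality of $\basecat$) to rewrite $b^* w_* \udl{\Delta}^1$ in terms of the pullback of $w$ along $b$, and the argument from the first paragraph produces nontrivial sections that do not come from global points; this then exhausts the parametrised points of $w_* \udl{\Delta}^1$ by the standard fact that any parametrised point of a $\basecat_{/V}$-category factors through some $b_!\terminalTCat$ for $b \in \basecat_{/V}$.

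I do not foresee a genuine obstacle here: the observation is formal and amounts to two uses of adjunction together with the trivial computations $w^* \terminalTCat \simeq \terminalTCat$ and $\map(\terminalTCat, \udl{\Delta}^1) \simeq \Delta^1$. The only thing to be careful about is bookkeeping the base categories, since throughout this discussion we freely pass between $\basecat$, $\basecat_{/V}$, and $\basecat_{/W}$, and one must check that the two objects identified as $\distributiveInitialObject$ and $\distributiveFinalObject$ genuinely agree with the initial and final objects of $w_*\udl{\Delta}^1$ as a complementable poset (this is immediate from the fact that coinduction preserves limits, so the initial and final objects of $\udl{\Delta}^1$ are sent to those of $w_*\udl{\Delta}^1$).
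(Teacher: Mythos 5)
Your argument is essentially the paper's own one‑line justification: the statement is an inline observation whose entire proof is the adjunction $w^*\dashv w_*$ together with $w^*\terminalTCat\simeq\terminalTCat$ and the constancy of $\udl{\Delta}^1$, and your description of the $B$--points agrees with \cref{cons:pointsInCubesAreSubsets}. The only imprecision is your final parenthetical: since $w_*$ preserves limits it identifies the image of $1$ with the final object, but the identification of the image of $0$ with $\distributiveInitialObject$ does not follow from limit--preservation; it follows instead from the fibrewise decomposition $b^*w_*\udl{\Delta}^1\simeq\prod_i u_{i*}g_i^*\udl{\Delta}^1$, under which the constant--at--$0$ section restricts to $(0,\dots,0)$, the initial object of each fibre cube.
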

	
	\begin{fact}[{\cite[Prop. 3.1.2]{kaifNoncommMotives}}]\label{prop:parametrisedCubesAreFibrewiseCubes}
		The parametrised cubes $w_*\underline{\Delta}^1$ are all parametrised posets, i.e. they belong to the full subcategory $\func(\baseCat_{/V}\op,\mathrm{Poset})\subseteq \func(\baseCat_{/V}\op,\cat)$. In fact, these are fibrewise given by cubes of various dimensions.    
	\end{fact}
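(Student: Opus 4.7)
The plan is to compute the value $(w_*\udl{\Delta}^1)(u)$ explicitly for each $u\colon U\to V$ in $\basecat_{/V}$ and show it is a finite cube of dimension $n = |\pi_0(U \times_V W)|$; the poset claim then follows immediately since cubes are posets. The starting point is the adjunction $w^* \dashv w_*$ combined with the Yoneda lemma: writing $y_u$ for the representable presheaf on $\basecat_{/V}$, we have
\[(w_*\udl{\Delta}^1)(u) \simeq \map_{\cat_{\basecat_{/V}}}(y_u, w_*\udl{\Delta}^1) \simeq \map_{\cat_{\basecat_{/W}}}(w^*y_u, \udl{\Delta}^1).\]
Since $\udl{\Delta}^1$ is constant with value $\Delta^1$ and the constant functor $\cat \to \cat_{\basecat_{/W}}$ is right adjoint to $\colim_{\basecat_{/W}\op}$, this mapping space is further identified with $\map_{\cat}(\colim_{\basecat_{/W}\op} w^*y_u, \Delta^1)$.

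The crux is identifying the colimit as a finite discrete set. Unwinding definitions, $w^*y_u(c\colon C \to W)$ is the set of maps $f\colon C \to U$ in $\basecat$ with $uf = wc$, so the category of elements of $w^*y_u$ has as objects triples $(C,c,f)$, equivalently maps of orbits $C$ into the pullback $U \times_V W$. By orbitality, this pullback exists in $\finite_\basecat$ and decomposes as $\coprod_{i=1}^{n} U_i$ for some finite collection of orbits $U_i$; since any map out of an orbit $C$ into a finite coproduct factors uniquely through a single summand, the category of elements decomposes as $\coprod_{i=1}^{n} \basecat_{/U_i}$. Each of these slices has a terminal object, hence contractible classifying space, so the colimit in $\cat$ is $\coprod_{i=1}^{n} \ast$, producing
\[(w_*\udl{\Delta}^1)(u) \simeq \map_{\cat}\Big(\coprod_{i=1}^{n} \ast, \Delta^1\Big) \simeq (\Delta^1)^n,\]
exactly the claimed cube of dimension $n$.

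Functoriality in $u$ produces maps between these cubes, and any functor between posets is automatically a morphism of posets; so $w_*\udl{\Delta}^1$ factors through $\func(\basecat_{/V}\op, \posetCategory)$. The main anticipated obstacle is bookkeeping: aligning the various opposites, slice conventions, and directions of the parametrised adjunctions, and verifying that the colimit appearing in the adjunction really is computed in $\cat$ (and hence reduces, for the set-valued presheaf $w^*y_u$, to the classifying space of its Grothendieck construction). It is worth noting that only orbitality is used for this fact; atomicity plays no role here.
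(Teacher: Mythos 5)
Your argument is correct, and it is worth noting at the outset that the paper does not prove this statement at all: it is imported as a Fact from \cite[Prop.~3.1.2]{kaifNoncommMotives}, so there is no internal proof to compare against line by line. Your computation is, however, essentially the standard one, and it is the same mechanism the paper itself uses in neighbouring constructions (e.g.\ \cref{cons:pointsInCubesAreSubsets} and the observation following \cref{setting:pointedReducedExcisive}, where the pullback \cref{eqn:pullback_beck_chevalley} yields $f^*w_*\simeq \prod_i u_{i*}g_i^*$): orbitality decomposes $U\times_V W$ into orbits $U_1,\dots,U_n$, and then one factor of $\Delta^1$ appears per orbit. The only packaging difference is that you reach this via Yoneda, the colimit--constant adjunction, and contractibility of the classifying spaces of the slices $\basecat_{/U_i}$ (each having a terminal object), whereas the reference reaches it via the Beck--Chevalley base-change formula $u^*w_*\simeq \prod_i u_{i*}g_i^*$ followed by taking global sections of a constant diagram over each $\basecat_{/U_i}$; these are the same computation seen from the category of elements rather than from the functor categories. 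Two small points of hygiene: what you denote $\map_{\cat_{\basecat_{/V}}}(y_u,-)$ and $\map_{\cat}(-,\Delta^1)$ must be the functor \emph{categories} $\func_{\basecat_{/V}}(y_u,-)$ and $\func(-,\Delta^1)$ (the value of a parametrised category at $u$ is a category, not a space), which requires the adjunctions you invoke to be compatible with cotensoring over $\cat$ --- true, but it is the content of the bookkeeping you defer. Your closing remark is also accurate: only orbitality (existence and orbit decomposition of $U\times_V W$ in $\finite_{\basecat}$) is used; atomicity enters elsewhere, e.g.\ in \cref{cons:singletonInclusion}.
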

	
	\begin{cons}[Points in cubes are subsets]\label{cons:pointsInCubesAreSubsets}
		Since we are considering base categories like $\basecat_{/V}$, without loss of generality, we may suppose $\basecat$ has a final object $V$ and $w\colon W \rightarrow V$ is the unique map to the final object. Let $B\in\basecat$ and $b\colon B \rightarrow V$ the unique map. We would like to describe a $B$--point $y\colon b_!\terminalTCat\rightarrow w_*\underline{\Delta}^1$. First of all, write $b^*W = U_1\amalg \cdots \amalg U_n$ for the decomposition of $b^*W$ into orbits over $B$ and $u_i\colon U_i\rightarrow B$ for the structure maps. Adjoining over $y$ we get a map
		\[\terminalTCat \longrightarrow b^*w_*w^*\underline{\Delta}^1 \simeq u_{1*}u_1^*\underline{\Delta}^1\times\cdots\times u_{k*}u_k^*\underline{\Delta}^1\]
		Adjoining over each of the component yields maps
		\[u_1^*\terminalTCat\longrightarrow u_1^*\underline{\Delta}^1\] which is either the constant functor at 0 or 1. Hence the $B$--point $y$ can be viewed as a choice of orbits in the decomposition  $b^*W = U_1\amalg \cdots \amalg U_n$.
	\end{cons}

	The following is the key construction in the theory of parametrised cubes.
	
	\begin{cons}[Singleton inclusion, {\cite[Cons. 3.1.5]{kaifNoncommMotives}}]\label{cons:singletonInclusion}
		Let $w\colon W\rightarrow V$ be a map in $\underline{\finite}_{\baseCat}$. We would like to construct a map
		\begin{equation}\label{eqn:fan_inclusion}
			\varphi_w \colon (w_!\terminalTCat)\tcone\longrightarrow w_*\underline{\Delta}^1
		\end{equation}
		which generalises the inclusion of the subsets of size at most 1 in Goodwillie's definition of strong cocartesianness.  For this, we first construct a map
		\[\psi_w\colon w_!\terminalTCat \longrightarrow w_*\underline{\Delta}^1.\]
		First note by atomic orbitality that we have the pullback in $\finite_{\baseCat}$
		\begin{equation}\label{eqn:singletonInclusionPullback}
			\begin{tikzcd}
				W\coprod C \ar[dr, phantom , very near start , "\lrcorner"]\rar["\id\coprod c"]\dar["\id\coprod \overline{c}"'] & W\dar["w"]\\
				W\rar["w"'] & V
			\end{tikzcd}
		\end{equation}
		where $C$ is some object in $\finite_\baseCat$. In particular, we have the decomposition
		\[w^*w_*\underline{\Delta}^1\simeq \id_*\id^*\underline{\Delta}^1\times \overline{c}_*c^*\underline{\Delta}^1\simeq \underline{\Delta}^1\times \overline{c}_*c^*\underline{\Delta}^1 \]
		Now by adjunction, to construct $\psi_w$ it suffices to construct its adjoint $\overline{\psi}_w\colon \terminalTCat \rightarrow w^*w_*\underline{\Delta}^1$. By the decomposition above, this is equivalent to constructing maps 
		\[\terminalTCat \rightarrow \underline{\Delta}^1\quad\text{and}\quad \big(\terminalTCat\rightarrow \overline{{c}}_*c^*\underline{\Delta}^1\big) \Leftrightarrow\big(\overline{c}^*\terminalTCat\simeq c^*\terminalTCat \rightarrow c^*\underline{\Delta}^1\big)\] To this end, we declare the first map to be the inclusion of the target $\{1\} \rightarrow \Delta^1$ and the second map to be $c^*$ applied to the inclusion of the source $\{0\} \rightarrow \Delta^1$. This yields the map $\psi_w$, which one should think of the inclusion of the singletons in a cube. By \cite[Cons. 3.1.3]{kaifNoncommMotives}, the map $\psi_w$ constructed above induces the desired map \cref{eqn:fan_inclusion}.
		
	\end{cons}
	
	\begin{rmk}
		The maps $\varphi_w$ generalise the inclusion of those subsets of size at most 1 into a cube. Namely if $T$ is a final object of $\basecat$ and $W$ is equivalent to a finite coproduct $\coprod_n T$ living over $T$ via the fold map, then $w_*\udl{\Delta}^1$ is the constant $\basecat$-category on the category $\prod_n \Delta^1$ and the map $\varphi_w$ is precisely given levelwise by the standard inclusion of subsets of size at most 1. See \cite[Ex. 3.1.6]{kaifNoncommMotives} for more details.
	\end{rmk}

	\begin{fact}\label{fact:basics_of_singleton_inclusion}
		Here are some basic properties of the construction above.
		\begin{enumerate}[label=(\arabic*)]
			\item The singleton inclusion map is stable under basechange \cite[Rmk. 3.1.7]{kaifNoncommMotives} in that for $b\colon B \rightarrow V$ a map in $\basecat$, writing $\overline{w}\colon b^*W\rightarrow B$ for the basechange of $w\colon W \rightarrow V$ along $b$, we get  $b^*\varphi_w \simeq \varphi_{\overline{w}}$.
			
			\item The map $\varphi_w\colon (w_!\underline{\ast})\tcone \rightarrow w_*\underline{\Delta}^1$ is fully faithful by \cite[Cor. 3.1.8]{kaifNoncommMotives}.
		\end{enumerate}
	\end{fact}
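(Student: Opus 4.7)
The plan is to reduce both claims to fibrewise and adjoint computations, leveraging Beck--Chevalley together with \cref{prop:parametrisedCubesAreFibrewiseCubes}.

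For part (1), I would unwind the definition: $\psi_w$ was specified by its adjoint $\overline{\psi}_w\colon \terminalTCat\to w^*w_*\udl{\Delta}^1$, itself constructed on the two components of the decomposition $w^*W\simeq W\amalg C$ coming from the pullback \cref{eqn:singletonInclusionPullback}. Basechanging that pullback along $b\colon B\to V$ produces the analogous decomposition $\overline{w}^*(b^*W)\simeq b^*W\amalg b^*C$. The key identifications I need are the Beck--Chevalley equivalences $b^*w_*\simeq \overline{w}_*\overline{b}^*$ and $b^*w_!\simeq \overline{w}_!\overline{b}^*$, together with the fact that $\overline{b}^*\udl{\Delta}^1\simeq \udl{\Delta}^1$ and $\overline{b}^*\terminalTCat\simeq \terminalTCat$ (constancy under basechange). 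With these in hand, $b^*\overline{\psi}_w$ is routinely identified with $\overline{\psi}_{\overline{w}}$ component by component, giving $b^*\psi_w\simeq\psi_{\overline{w}}$; passing to the associated cones yields $b^*\varphi_w\simeq\varphi_{\overline{w}}$ since basechange commutes with the $\tcone$--construction.

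For part (2), I would invoke \cref{prop:parametrisedCubesAreFibrewiseCubes} to reduce to a posetal calculation: both $(w_!\terminalTCat)\tcone$ and $w_*\udl{\Delta}^1$ are parametrised posets, so fully faithfulness of $\varphi_w$ can be checked pointwise on $\basecat_{/V}$. By part (1), pointwise verification reduces to a single basechange $b\colon B\to V$, whence $b^*W$ decomposes into orbits $U_1\amalg\cdots\amalg U_n$. The fibre of $w_*\udl{\Delta}^1$ at $B$ is then the product $\prod_i (u_{i*}\udl{\Delta}^1)(B)$, and via \cref{cons:pointsInCubesAreSubsets} the parametrised points of the source correspond to $\distributiveInitialObject$ together with one ``singleton'' point per orbit $U_i$. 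The map $\varphi_w$ is precisely the inclusion of this sub--poset of ``size at most one'' configurations, which is a full sub--poset inclusion and therefore automatically fully faithful.

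I anticipate the main obstacle will be the bookkeeping in part (1): one must carefully match the two asymmetric choices in the definition of $\overline{\psi}_w$ (the inclusion $\{1\}\hookrightarrow\Delta^1$ on the $W$--summand versus the $c^*$--pullback of $\{0\}\hookrightarrow\Delta^1$ on the $C$--summand) with their basechanged counterparts. This is where the compatibility of $\overline{c}$ and $c$ with $b$ under pullback needs to be applied with some care. Once these identifications are aligned, the remaining manipulations are formal adjunction yoga, and part (2) follows with no further work beyond the fibrewise reduction.
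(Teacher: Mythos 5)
The first thing to note is that the paper does not prove this statement at all: it is recorded as a \emph{Fact} whose two parts are imported by citation from \cite[Rmk.~3.1.7]{kaifNoncommMotives} and \cite[Cor.~3.1.8]{kaifNoncommMotives}. So there is no internal proof to compare against; what you have written is a reconstruction of the external arguments. As such a reconstruction, part (1) is on the right track: the singleton inclusion $\psi_w$ is defined through its adjoint on the two summands of $w^*w_*$ coming from the atomicity decomposition of \cref{eqn:singletonInclusionPullback}, the Beck--Chevalley equivalences $b^*w_*\simeq\overline{w}_*\overline{b}^*$ and $b^*w_!\simeq\overline{w}_!\overline{b}^*$ hold by orbitality, and the pullback of the diagonal summand is again the diagonal summand, so the two asymmetric choices match up after basechange; passing to cones is harmless since $(-)\tcone$ commutes with restriction. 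This is essentially the argument in the cited remark.

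Part (2) is where you should be more careful. Reducing fully faithfulness to a fibrewise check is fine, but your description of the fibre of the \emph{source} is off: \cref{cons:pointsInCubesAreSubsets} describes the $B$--points of the \emph{target} $w_*\udl{\Delta}^1$ (choices of sub-collections of orbits of $b^*W$), whereas the fibre of $(w_!\terminalTCat)\tcone$ at $b\colon B\to V$ is $\map_{\finite_{\basecat_{/V}}}(B,W)$ with an initial object adjoined. By atomicity a map $B\to W$ over $V$ corresponds to an orbit of $b^*W$ mapping \emph{isomorphically} to $B$, so the source fibre is not ``one singleton per orbit $U_i$'' but only one per orbit equivalent to $B$; the image of $\varphi_w$ in the cube $(\Delta^1)^{\times n}$ is $\distributiveInitialObject$ together with that sub-collection of singleton vertices. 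The full-subposet conclusion survives this correction, but two points still need justification rather than assertion: (i) that $w_!\terminalTCat$ is fibrewise discrete (so that the source really is a parametrised poset and endomorphism spaces are contractible --- this is where atomicity is doing work, and it is not supplied by \cref{prop:parametrisedCubesAreFibrewiseCubes}, which concerns the target only), and (ii) that distinct maps $B\to W$ over $V$ hit distinct singleton vertices, i.e.\ injectivity on objects. With those two points supplied, your argument closes; without them it is a sketch with the key atomicity input left implicit.
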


	\begin{terminology}
		It will be convenient to use also the phrase \textit{singleton cocartesian} to denote any parametrised cube which is left Kan extended from the singletons, i.e. those which are $\varphi_w$--cocartesian in the sense of \cref{defn:sigma_cocartesian}. This allows us to be vague about which $w$ we are using. Analogously, we will use the term \textit{singleton cartesian} for the obvious dual notion. More precisely, a cubical diagram is singleton cartesian if it is right Kan extended along \[(w_!\ast)\tcocone \simeq ((w_!\ast)\tcone)\op\xhookrightarrow{\varphi_w\op} (w_*\udl{\Delta}^1)\op \simeq w_*\udl{\Delta}^1.\]
	\end{terminology}
	
	Having set up the basic language, our next task is to further develop the theory by proving that the external indexed product of any singleton cocartesian cube is again singleton cocartesian in \cref{prop:productsOfStronglyCocartesianCubes}. This seems to us an under--appreciated fact in ordinary Goodwillie calculus since the inductive proof is relatively painless there. We have no  recourse to such inductive proofs in this setting, and so the proof we provide will be necessarily categorical and ``coordinate--free''. 
	
	To this end, we first formalise the process of taking the ``external indexed product'' of diagrams and record a basic property of this construction.
	\begin{cons}[External indexed products of diagrams]\label{cons:indexedProductComparisonInternalHom}
		Let $a \colon V \rightarrow A$ be a map in $\basecat$. Let $\underline{I}, \underline{\sC} \in \cat_{\basecat_{/V}}$. We now construct a natural map
		\[\varepsilon^*\colon a_*\underline{\func}(\underline{I},\underline{\sC}) \longrightarrow \underline{\func}(a_*\underline{I},a_*\underline{\sC})\] which, informally, takes a diagram $X\colon \udl{I}\rightarrow\udl{\sC}$ and produces the new diagram $a_*X=\varepsilon^*X \colon a_*\udl{I}\rightarrow a_*\udl{\sC}$. This is achieved as follows: let $\varepsilon\colon a^*a_*\underline{I}\rightarrow \underline{I}$ be the adjunction counit. We then obtain a map
		\[\underline{\func}(\underline{I},\underline{\sC}) \xlongrightarrow{\varepsilon^*} \underline{\func}(a^*a_*\underline{I},\underline{\sC})\]
		Applying $a_*$ to this map and noting that $a_*\underline{\func}(a^*a_*\underline{I},\underline{\sC})\simeq \underline{\func}(a_*\underline{I},a_*\underline{\sC})$, we obtain the desired map. 
		
		Now suppose that $\underline{I}$ has an initial object. Observe that by atomicity of $\basecat$, the adjunction counit $\varepsilon\colon a^*a_*\underline{I}\rightarrow\underline{I}$ is given by projecting onto the $\underline{I}$ component in the decomposition $a^*a_*\underline{I} \simeq \underline{I}\times c_*c^*\underline{I}$ associated to the decomposition $V\times_AV \simeq V\amalg C$. Since $c_*c^*\underline{I}$ has an initial object, we have an adjunction $\distributiveInitialObject \colon \terminalTCat \rightleftharpoons c_*c^*\underline{I}\cocolon \pi$ where $\pi$ is the unique functor. Taking the product with $\underline{I}$ then yields the adjunction
		\[\distributiveInitialObject\colon \underline{I}\rightleftharpoons a^*a_*\underline{I} \cocolon \varepsilon\]
	\end{cons}

	\begin{prop}\label{prop:leftKanExtensionIndexedProductInterchange}
		Let $j\colon \underline{I}\rightarrow\underline{J}$ be a functor which preserves the initial object. Then the square
		\begin{center}
			\begin{tikzcd}
				a_*\underline{\func}(\underline{I},\underline{\sC})\dar["j_!"']\rar["\varepsilon^*"] & \underline{\func}(a_*\underline{I},a_*\underline{\sC})\dar["(a_*j)_!"']\\
				a_*\underline{\func}(\underline{J},\underline{\sC}) \rar["\varepsilon^*"] & \underline{\func}(a_*\underline{J},a_*\underline{\sC})
			\end{tikzcd}
		\end{center}
		commutes.
	\end{prop}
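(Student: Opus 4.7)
The plan is to rewrite both horizontal arrows of the square as $a_*$ applied to a left Kan extension and thereby reduce commutativity to that of a much simpler underlying square of $\basecat_{/V}$-categories. From \cref{cons:indexedProductComparisonInternalHom} we have the adjunction $\distributiveInitialObject \dashv \varepsilon$ between $\udl{I}$ and $a^*a_*\udl{I}$ (and similarly for $\udl{J}$), so on functor categories the restriction functor $\varepsilon^*$ coincides with left Kan extension $\distributiveInitialObject_!$. Unpacking the construction of the horizontal arrow in the statement, it therefore factors as
\[ a_*\udl{\func}(\udl{I}, \udl{\sC}) \xlongrightarrow{a_*(\distributiveInitialObject^{\udl{I}}_!)} a_*\udl{\func}(a^*a_*\udl{I}, \udl{\sC}) \xlongrightarrow{\simeq} \udl{\func}(a_*\udl{I}, a_*\udl{\sC}),\]
and analogously for the bottom row.

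Next, I would identify the right-hand vertical $(a_*j)_!$ with $a_*((a^*a_*j)_!)$ under the internal-hom equivalence $a_*\udl{\func}(a^*(-),\udl{\sC}) \simeq \udl{\func}(-, a_*\udl{\sC})$ coming from the adjunction $a^* \dashv a_*$. It is cleanest to do this at the level of right adjoints: naturality of the internal-hom equivalence in the first argument yields that $a_*((a^*a_*j)^*)$ corresponds to $(a_*j)^*$, and passing to left adjoints on both sides then gives the desired identification $a_*((a^*a_*j)_!) \simeq (a_*j)_!$.

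With these two reformulations, the whole square in the statement becomes $a_*$ applied to the square
\[
\begin{tikzcd}
\udl{\func}(\udl{I},\udl{\sC}) \ar[r,"\distributiveInitialObject^{\udl{I}}_!"] \ar[d,"j_!"'] & \udl{\func}(a^*a_*\udl{I},\udl{\sC}) \ar[d,"(a^*a_*j)_!"] \\
\udl{\func}(\udl{J},\udl{\sC}) \ar[r,"\distributiveInitialObject^{\udl{J}}_!"] & \udl{\func}(a^*a_*\udl{J},\udl{\sC}),
\end{tikzcd}
\]
and by the functoriality of left Kan extension under composition, this commutes as soon as the underlying diagram
\[
\begin{tikzcd}
\udl{I} \ar[r,"\distributiveInitialObject^{\udl{I}}"] \ar[d,"j"'] & a^*a_*\udl{I} \ar[d,"a^*a_*j"] \\
\udl{J} \ar[r,"\distributiveInitialObject^{\udl{J}}"] & a^*a_*\udl{J}
\end{tikzcd}
\]
of $\basecat_{/V}$-categories commutes. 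By the atomicity of $\basecat$, we have decompositions $a^*a_*\udl{K} \simeq \udl{K} \times c_*c^*\udl{K}$ under which $\distributiveInitialObject^{\udl{K}}$ is identified with the inclusion $\id \times \distributiveInitialObject$ and $a^*a_*j$ with $j \times c_*c^*j$. Since $j$ preserves initial objects by hypothesis, so does $c_*c^*j$ (which is checked fibrewise), and both compositions send an object $i$ to $(j(i), \distributiveInitialObject)$. The main obstacle will be the middle step: correctly invoking the internal-hom equivalence $a_*\udl{\func}(a^*(-),-) \simeq \udl{\func}(-, a_*-)$ in a way functorial enough to produce the identification $a_*((a^*a_*j)_!) \simeq (a_*j)_!$; all remaining verifications are formal adjunction manipulations combined with the atomicity decomposition.
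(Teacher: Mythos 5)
Your argument is correct and rests on the same two inputs as the paper's proof: the adjunction $\distributiveInitialObject\dashv\varepsilon$ from \cref{cons:indexedProductComparisonInternalHom}, and the commutativity of the square of index categories formed by $\distributiveInitialObject\colon\udl{I}\to a^*a_*\udl{I}$, $j$, $a^*a_*j$, and $\distributiveInitialObject\colon\udl{J}\to a^*a_*\udl{J}$, which holds precisely because $j$ preserves the initial object (via the atomicity decomposition $a^*a_*\udl{K}\simeq\udl{K}\times c_*c^*\udl{K}$, exactly as you say). The difference is presentational, but it is worth pointing out because it dissolves the step you flag as the main obstacle: rather than converting the horizontal arrows into left Kan extensions $\distributiveInitialObject_!$ and then wrestling with the identification $a_*((a^*a_*j)_!)\simeq(a_*j)_!$, the paper passes the \emph{entire} square to right adjoints, using that $\varepsilon^*\dashv\distributiveInitialObject^*$ (which follows from $\distributiveInitialObject\dashv\varepsilon$ together with the fact that indexed products preserve adjunctions). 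In the resulting right-adjoint square every arrow is a restriction functor, so commutativity is immediate from the commuting square of index categories, and one never needs the existence or the naturality of intermediate Kan extensions such as $(a^*a_*j)_!$. Your route goes through, but the cleanest way to justify your middle step is, as you yourself suggest, to argue at the level of right adjoints throughout --- at which point you have essentially rediscovered the paper's argument.
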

	\begin{proof}
		From the adjunction $\distributiveInitialObject\dashv \varepsilon$ of \cref{cons:indexedProductComparisonInternalHom} together with the fact that indexed products preserve adjunctions \cite[Lem. 4.3.2]{kaifPresentable}, we obtain an adjunction $\varepsilon^*\colon a_*\underline{\func}(\underline{I},\underline{\sC}) \rightleftharpoons\underline{\func}(a_*\underline{I},a_*\underline{\sC}) \cocolon \distributiveInitialObject^*$. Now passing everything in sight to their right adjoints in the square of the proposition yields the square
		\begin{center}
			\begin{tikzcd}
				a_*\underline{\func}(\underline{I},\underline{\sC}) & \underline{\func}(a_*\underline{I},a_*\underline{\sC})\lar["\distributiveInitialObject^*"']\\
				a_*\underline{\func}(\underline{J},\underline{\sC}) \uar["j^*"]& \underline{\func}(a_*\underline{J},a_*\underline{\sC})\uar["(a_*j)^*"]\lar["\distributiveInitialObject^*"']
			\end{tikzcd}
		\end{center}
		which clearly commutes since 
		\begin{center}
			\begin{tikzcd}
				\underline{I} \rar["\distributiveInitialObject"]\dar["j"'] & a^*a_*\underline{I} \dar["a^*a_*j"] \\
				\underline{J} \rar["\distributiveInitialObject"]& a^*a_*\underline{J} 
			\end{tikzcd}
		\end{center}
		commutes by our assumption that $j$ preserves the initial object. Hence the left adjoint square commutes too, as required.
	\end{proof}

	\begin{cons}[Pushforward of singletons] \label{cons:pushforwardOfSingeltons}
		Let $w\colon W \rightarrow V$ be in $\finite_\basecat$ and $a\colon V \rightarrow A$ be in $\basecat$. Recall the map $\distributiveInitialObject \colon (w_!\terminalTCat)\tcone\rightarrow a^*a_*(w_!\terminalTCat)\tcone$ given in \cref{cons:indexedProductComparisonInternalHom}. We would like to construct a factorisation of it that looks like
		\begin{center}
			\begin{tikzcd}
				&  a^*(a_!w_!\terminalTCat)\tcone\dar["a^*\theta"]\\
				(w_!\terminalTCat)\tcone\rar["\distributiveInitialObject"']\ar[ur, "z"] &  a^*a_*(w_!\terminalTCat)\tcone
			\end{tikzcd}
		\end{center}
		By construction all functors in the triangle above will preserve initial objects and so it suffices to construct the factorisation 
		\begin{center}
			\begin{tikzcd}
				&  a^*a_!w_!\terminalTCat\dar["a^*\overline{\theta}"]\\
				w_!\terminalTCat\rar["\distributiveInitialObject"']\ar[ur, "\overline{z}"] &  a^*a_*(w_!\terminalTCat)\tcone, 
			\end{tikzcd}
		\end{center}
		which we may then extend uniquely to a triangle of the first kind. We simply set $\overline{z}$ to be the adjunction unit, then by adjunction there is a unique $\overline{\theta}$ making the triangle commute. We use this choice, thus completing the construction.
		
		Crucially, the map $\theta$ interacts well with the maps
		\[
		\varphi_w\colon (w_!\terminalTCat)\tcone \rightarrow w_*\underline{\Delta}^1
		\] of \cref{cons:singletonInclusion}, by which we mean that the composite
		\[(a_!w_!\terminalTCat)\tcone \xlongrightarrow{\theta} a_*(w_!\terminalTCat)\tcone\xlongrightarrow{a_*\varphi_w} a_*w_*\underline{\Delta}^1\] is equivalent to $\varphi_{aw}$. To prove this, we observe that since all three functors preserve the initial object, it suffices to show that the composite 
		\[a_!w_!\terminalTCat\xlongrightarrow{\overline{\theta}} a_*(w_!\terminalTCat)\tcone\xlongrightarrow{a_*\varphi_w} a_*w_*\underline{\Delta}^1\]
		is equivalent to $\psi_{aw}$, the singleton inclusion map. Using the orbital decomposition $V\times_AV\simeq V\amalg C$ and adjoining the $a_!$ over, this composite yields the composite
		\begin{center}
			\begin{tikzcd}
				w_!\terminalTCat\rar["\distributiveInitialObject"]\dar[equal] & a^*a_*(w_!\terminalTCat)\tcone\ar[rr,"a^*a_*\varphi_w"] \dar[equal]&  &a^*a_*w_*\underline{\Delta}^1\dar[equal]\\
				w_!\terminalTCat\rar["\mathrm{incl}\times \distributiveInitialObject"] & (w_!\terminalTCat)\tcone\times c_*c^*(w_!\terminalTCat)\tcone\ar[rr,"\varphi_w\times c_*c^*\varphi_w"] & & w_*\underline{\Delta}^1\times c_*c^*w_*\underline{\Delta}^1.
			\end{tikzcd}
		\end{center}
		Hence, the datum of this composite is equivalent to the data of the maps
		\[w_!\terminalTCat \xlongrightarrow{\psi_w}w_*\underline{\Delta}^1\quad\quad\quad\quad w_!\terminalTCat\xlongrightarrow{\distributiveInitialObject} c_*c^*w_*\underline{\Delta}^1\]
		This is precisely the data of maps defining the map $\psi_{aw} \colon a_!w_!\terminalTCat \rightarrow a_*w_*\underline{\Delta}^1$.
	\end{cons}
	
	We are now ready to show that indexed products preserve singleton cocartesian cubes.
	
	\begin{prop}\label{prop:productsOfStronglyCocartesianCubes}
		Suppose $X\colon w_*\underline{\Delta}^1\rightarrow\underline{\sC}$ is a singleton cocartesian $w$--cube and let $a\colon V\rightarrow A$ be in $\basecat$. In this case, $a_*X\colon a_*w_*\underline{\Delta}^1\rightarrow a_*\underline{\sC}$ is then a singleton cocartesian $a\circ w$--cube.
	\end{prop}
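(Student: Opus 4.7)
The plan is to combine the interchange formula between indexed product and left Kan extension given by \cref{prop:leftKanExtensionIndexedProductInterchange} with the factorization $\varphi_{aw}\simeq (a_*\varphi_w)\circ\theta$ provided by \cref{cons:pushforwardOfSingeltons}. Since $X$ is singleton cocartesian, I first write $X\simeq (\varphi_w)_!\bar X$ where $\bar X\coloneqq \varphi_w^* X$. The singleton inclusion $\varphi_w\colon (w_!\terminalTCat)\tcone\to w_*\udl{\Delta}^1$ preserves the initial object by \cref{cons:singletonInclusion}, so \cref{prop:leftKanExtensionIndexedProductInterchange} applies with $j=\varphi_w$ to give $\varepsilon^*((\varphi_w)_!\bar X)\simeq (a_*\varphi_w)_!(\varepsilon^*\bar X)$. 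Under the identification of $\varepsilon^*$ with the formation of indexed products of functors from \cref{cons:indexedProductComparisonInternalHom}, the left hand side is $a_*X$, so $a_*X \simeq (a_*\varphi_w)_!(\varepsilon^*\bar X)$.

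Second, I set $Y\coloneqq \theta^*(\varepsilon^*\bar X)$ and use \cref{cons:pushforwardOfSingeltons} to get $(\varphi_{aw})_! \simeq (a_*\varphi_w)_!\,\theta_!$, whence $(\varphi_{aw})_! Y \simeq (a_*\varphi_w)_!\,\theta_!\theta^*(\varepsilon^*\bar X)$. Since $\varphi_w$ is fully faithful by \cref{fact:basics_of_singleton_inclusion}(2) and indexed products preserve fully faithful inclusions of posets (being right adjoints that preserve monomorphisms), the functor $a_*\varphi_w$ is fully faithful, and so $(a_*\varphi_w)_!$ is fully faithful as well. The desired equivalence $(\varphi_{aw})_!Y\simeq a_*X$ thus reduces to showing that $\varepsilon^*\bar X$ is itself left Kan extended along $\theta$, i.e.\ that the counit $\theta_!\theta^*(\varepsilon^*\bar X)\to \varepsilon^*\bar X$ is an equivalence.

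This last equivalence is the heart of the argument and the main technical obstacle. To establish it, I would unwind the combinatorics of $a_*(w_!\terminalTCat)\tcone$ using the orbital decomposition $V\times_A V\simeq V\amalg C$ afforded by atomic orbitality, which identifies $a^*a_*(w_!\terminalTCat)\tcone \simeq (w_!\terminalTCat)\tcone \times c_*c^*(w_!\terminalTCat)\tcone$. Under this identification, $\theta$ corresponds fibrewise to the inclusion of the star (initial object together with atoms) into a product of stars, mirroring \cref{cons:singletonInclusion} one level up. Computing $\varepsilon^*\bar X$ pointwise via the $\distributiveInitialObject\dashv \varepsilon$ adjunction from \cref{cons:indexedProductComparisonInternalHom} and matching against the pointwise formula for $\theta_!\theta^*(\varepsilon^*\bar X)$ should yield the equivalence. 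The difficulty is that, in contrast to the classical case, no induction on the dimension of the cube is available here; the verification must be performed coordinate-free, carefully tracking how the atoms of $(a_!w_!\terminalTCat)\tcone$ map via $\theta$ to the ``indicator-tuples'' in the product-of-stars, and recognizing that the resulting pointwise colimit recovers the whole of $\varepsilon^*\bar X$.
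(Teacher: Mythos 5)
Your reduction is sound and follows the same skeleton as the paper's argument: you use \cref{prop:leftKanExtensionIndexedProductInterchange} to obtain $a_*X\simeq (a_*\varphi_w)_!(\varepsilon^*\bar X)$, and the factorisation $(\varphi_{aw})_!\simeq (a_*\varphi_w)_!\circ\theta_!$ from \cref{cons:pushforwardOfSingeltons} to reduce everything to the single claim that $\varepsilon^*\bar X$ lies in the essential image of $\theta_!$. The problem is that this claim --- which you yourself flag as ``the heart of the argument and the main technical obstacle'' --- is exactly where your proof stops. In its place you offer a plan for a pointwise verification (unwind $a_*(w_!\terminalTCat)\tcone$ fibrewise via the decomposition $V\times_AV\simeq V\amalg C$, track atoms and indicator-tuples, compare colimit formulas), together with the admission that no dimension induction is available. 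That is not a proof: computing $\theta_!\theta^*(\varepsilon^*\bar X)$ pointwise requires controlling the comma categories of $\theta$ over arbitrary parametrised points of $a_*(w_!\terminalTCat)\tcone$ and matching the resulting indexed colimits against the values of $\varepsilon^*\bar X$, and nothing in your sketch establishes that this match actually occurs. So the proposal has a genuine gap at its only nontrivial step.

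The gap can be closed with no combinatorics at all, using data you already have in hand. \cref{cons:pushforwardOfSingeltons} provides the commuting triangle $\distributiveInitialObject = (a^*\theta)\circ z$ on $(w_!\terminalTCat)\tcone$. Precomposition gives $\distributiveInitialObject^* = z^*\circ (a^*\theta)^*$ on functor categories valued in $\udl{\sC}$; since $\distributiveInitialObject\dashv\varepsilon$ (\cref{cons:indexedProductComparisonInternalHom}) yields $\varepsilon^*\dashv\distributiveInitialObject^*$, passing to left adjoints in this triangle and then applying $a_*$ (using $a_*\udl{\func}(a^*K,\udl{\sC})\simeq \udl{\func}(K,a_*\udl{\sC})$) produces an equivalence $\varepsilon^*\simeq \theta_!\circ z_!$ of functors $a_*\udl{\func}((w_!\terminalTCat)\tcone,\udl{\sC})\rightarrow \udl{\func}(a_*(w_!\terminalTCat)\tcone,a_*\udl{\sC})$. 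Hence $\varepsilon^*\bar X\simeq \theta_!(z_!\bar X)$ is tautologically left Kan extended along $\theta$, and one concludes $a_*X\simeq (\varphi_{aw})_!(z_!\bar X)$ directly, without needing the full faithfulness of $(a_*\varphi_w)_!$ or any counit argument. This purely adjunction-theoretic step is precisely how the paper proceeds, and it is the one ingredient your proposal is missing.
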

	\begin{proof}
		By the triangle in \cref{cons:pushforwardOfSingeltons}, we have a commuting triangle
		\begin{center}
			\begin{tikzcd}
				&  \underline{\func}(a^*(a_!w_!\terminalTCat)\tcone, \underline{\sC})\ar[dl, "z^*"']\\
				\underline{\func}((w_!\terminalTCat)\tcone,\underline{\sC}) &  \underline{\func}(a^*a_*(w_!\terminalTCat)\tcone,\underline{\sC})\uar["(a^*\theta)^*"']\lar["\distributiveInitialObject^*"]
			\end{tikzcd}
		\end{center}
		Passing to left adjoints and using the $\distributiveInitialObject\dashv \varepsilon$ adjunction, we obtain the commuting triangle 
		\begin{center}
			\begin{tikzcd}
				&  \underline{\func}(a^*(a_!w_!\terminalTCat)\tcone, \underline{\sC})\dar["(a^*\theta)_!"]\\
				\underline{\func}((w_!\terminalTCat)\tcone,\underline{\sC}) \ar[ur, "z_!"]\rar["\varepsilon^*"']&  \underline{\func}(a^*a_*(w_!\terminalTCat)\tcone,\underline{\sC}).
			\end{tikzcd}
		\end{center}
		Applying $a_*$ to this triangle and combining the result with the square from \cref{prop:leftKanExtensionIndexedProductInterchange}, we obtain the following big commuting diagram
		\begin{center}
			\begin{tikzcd}
				&  \underline{\func}((a_!w_!\terminalTCat)\tcone, a_*\underline{\sC})\dar["\theta_!"]\ar[dd, bend left = 80, "(\varphi_{aw})_!"]\\
				a_*\underline{\func}((w_!\terminalTCat)\tcone,\underline{\sC})\dar["(\varphi_w)_!"']\rar["\varepsilon^*"]\ar[ur, "z_!"] & \underline{\func}(a_*(w_!\terminalTCat)\tcone,a_*\underline{\sC})\dar["(a_*\varphi_w)_!"]\\
				a_*\underline{\func}(w_*\underline{\Delta}^1,\underline{\sC}) \rar["\varepsilon^*"] & \underline{\func}(a_*w_*\underline{\Delta}^1,a_*\underline{\sC})
			\end{tikzcd}
		\end{center}
		where the equivalence of the right vertical composition with the bent arrow is by the second paragraph from \cref{cons:pushforwardOfSingeltons}.
		
		Now $X$ being asingleton cocartesian $w$--cube means that there is a $Y\colon (w_!\terminalTCat)\tcone\rightarrow\underline{\sC}$ such that $X\simeq (\varphi_w)_!Y$. Therefore, by the big diagram, we see that 
		\[a_*X=\varepsilon^*X \simeq (\varphi_{aw})_!z_!Y \colon a_*w_*\underline{\Delta}^1\longrightarrow a_*\underline{\sC}\]
		as desired.
	\end{proof}

	\subsection{Suspensions and spheres}\label{subsection:parametrised_spheres}

	In this subsection, we use the parametrised cubes to construct parametrised suspension and loop functors as well as the parametrised spheres that will feature in \cref{alphthm:spherically_faithful_implies_semiadditivity} and \cref{alphThm:sphere_invertible_equivalent_to_parametrised_stable}.
	
	% \begin{obs}\label{obs:slices_in_atomic_orbital}
		%     For any $u\in \basecat$, we have that
		%     \[(\basecat_{/u})_{u/}\simeq \ast\]
		%     This is because an object in this category is a morphism $z\xrightarrow{r} u$ equipped with a morphism $u\xrightarrow{i} z$ such that $r\circ i \simeq \id_u$. Hence by atomic orbitality, we must have that $r$ and $i$ were equivalences, whence the result.
		% \end{obs}
	
	For the reader's convenience, we record the following result of Shah about general base categories $\basecat$.
	
	\begin{prop}[{\cite[Prop. 5.5]{shahThesis}}]\label{prop:colimtsInCofreeCategories}
		Let $\underline{I}\in\cat_{\basecat}$, $p\colon \underline{I}\rightarrow\terminalTCat$ the unique map, and $u\in\basecat$. Let $\varphi\in\func_{\basecat}(\underline{I},\underline{\spc})$. Then $(p_!\varphi)(u) \simeq \colim_{I_u}(I_u\rightarrow \spc)$.  This holds similarly when we replace $\udl{\spc}$ with $\udl{\spc}_*$ and $\udl{\cat}$.
	\end{prop}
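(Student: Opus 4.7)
The plan is to derive this from the classical pointwise formula for left Kan extensions by passing through straightening. Parametrised diagrams $\varphi \in \func_\basecat(\underline{I}, \underline{\spc})$ correspond, via the definition of $\underline{\spc}$ as an unstraightening, to ordinary $\spc$-valued functors $\tilde\varphi\colon \int\underline{I} \to \spc$ out of the total category of $\underline{I}$. Under this equivalence, the parametrised left Kan extension $p_!$ along $p\colon\underline{I} \to \terminalTCat$ should identify with the ordinary left Kan extension $\pi_!$ along the structure map $\pi\colon \int\underline{I} \to \basecat\op$; the value $(p_!\varphi)(u)$ at a point $u\in\basecat$ corresponds precisely to the value of $\pi_!\tilde\varphi$ at $u\in\basecat\op$.

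With this identification in hand, I would apply the ordinary pointwise formula to obtain
\[ (\pi_!\tilde\varphi)(u) \simeq \colim_{\pi/u}\tilde\varphi|_{\pi/u}. \]
Now $\pi$ is a cocartesian fibration and $\id_u$ is an initial object of the slice $(\basecat\op)^{u/}$, so the standard cofinality result for (co)cartesian fibrations over slices with an initial object yields that the inclusion of the fibre $\pi^{-1}(u) = I_u \hookrightarrow \pi/u$ is final. Hence the colimit reduces to $\colim_{I_u}\varphi|_{I_u}$, as desired.

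The main obstacle is the first step of matching parametrised and ordinary left Kan extensions through straightening. The cleanest approach is to verify this formally by comparing right adjoints: restriction along $p$ commutes with straightening tautologically, so the associated left adjoints must agree by uniqueness of adjoints. Once this is settled, the remainder is routine. The variants for $\underline{\spc}_*$ and $\underline{\cat}$ follow by exactly the same argument, replacing $\spc$ with $\spc_*$ or $\cat$ throughout and using that the respective parametrised categories arise as unstraightenings of the constant $\basecat\op$-diagrams on these targets.
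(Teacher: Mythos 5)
The paper does not actually prove this statement: it is imported verbatim from Shah's thesis (the cited Prop.~5.5), so there is no internal argument to compare against. Your proof is correct and is essentially the standard (and, as far as I can tell, the cited) argument: use the corepresenting property $\func_{\basecat}(\udl{I},\udl{\spc})\simeq \func(\totalCategory(\udl{I}),\spc)$ and its naturality in $\udl{I}$ to match $p^*$ with $\pi^*$ and hence $p_!$ with $\pi_!$ by uniqueness of left adjoints, then conclude by the pointwise formula and finality of the fibre inclusion into the comma category. Two small imprecisions are worth fixing. First, the comma category computing $(\pi_!\wtilde{\varphi})(u)$ fibres over $(\basecat\op)_{/u}$, in which $\id_u$ is a \emph{final} object; the cofinality statement you want is that for a cocartesian fibration over a base with a final object, the inclusion of the fibre over that final object is colimit-final (it admits a left adjoint given by cocartesian pushforward), rather than anything about an initial object of $(\basecat\op)^{u/}$. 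Second, $\udl{\spc}$, $\udl{\spc}_*$ and $\udl{\cat}$ are the \emph{cofree} parametrised categories on $\spc$, $\spc_*$, $\cat$, i.e.\ $u\mapsto \func((\basecat_{/u})\op,-)$, not the unstraightenings of the constant diagrams: the constant parametrised category on $\spc$ corepresents functors out of the localisation of $\totalCategory(\udl{I})$ at the cocartesian edges, not out of $\totalCategory(\udl{I})$ itself, so your opening identification genuinely depends on cofreeness. With those two points corrected, the argument is complete.
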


	\begin{cons}[Parametrised suspensions, loops, and spheres]\label{cor:parametrisedSuspensions}
		Let $\udl{\sC}$ be a parametrised cocomplete category with a final object. Let $w\colon W\rightarrow V$ be a map in $\finite_{\basecat}$ and write $\initialObject{\sigma}_w\colon \terminalTCat\hookrightarrow w_*\Delta^1$ for the inclusion of the initial object. Then we define the \textit{parametrised suspension} endofunctor $\Sigma^w$ on $\udl{\sC}$ as the composite of $\basecat_{/V}$--functors
		\[\Sigma^w\colon V^*\udl{\sC} \xhookrightarrow{\initialObject{\sigma}_{w*}} \underline{\func}(u_*\underline{\Delta}^1\backslash\distributiveFinalObject, V^*\udl{\sC})\xlongrightarrow{\colim}V^*\udl{\sC}.\] If, additionally, $\udl{\sC}$ is pointed and parametrised complete, then we also define the \textit{parametrised loop} endofunctor as the composite of $\basecat_{/V}$--functors
		\[\Omega^w\colon V^*\udl{\sC} \xhookrightarrow{(\initialObject{\sigma}_{w}\op)_!} \underline{\func}(w_*\underline{\Delta}^1\backslash\distributiveInitialObject, V^*\udl{\sC})\xlongrightarrow{\lim}V^*\udl{\sC}.\]
		In the special case when $\udl{\sC}=\udl{\spc}_*$, we define the \textit{$w$--parametrised sphere} to be 
		\[S^w\coloneqq \Sigma^wS^0\in \udl{\spc}_*.\]
		This terminology is justified since $S^w\in\func((\basecat_{/V})\op,\spc_*)$ is given fibrewise by spheres of various dimensions. This is because $w_*\underline{\Delta}^1\backslash\distributiveFinalObject$ is fibrewise given by punctured cubes by \cref{prop:parametrisedCubesAreFibrewiseCubes}, and so applying \cref{prop:colimtsInCofreeCategories}, we see that $S^w$ is fibrewise given by some suspension of $S^0$. 
	\end{cons}
	
	As one may expect, the suspension and loop functors participate in an adjunction.

	\begin{prop}\label{prop:suspension_loop_adjunction}
		Let $\basecat$ be atomic orbital. For every morphism $w \colon W \rightarrow V$ in $\finite_{\basecat}$, and for $\udl{\sC}$ a pointed parametrised bicomplete category, there is a suspension--loop adjunction $\Sigma^w\colon V^*\udl{\sC}\rightleftharpoons V^*\udl{\sC} \cocolon \Omega^w$.
	\end{prop}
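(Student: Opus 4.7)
The adjunction $\Sigma^w \dashv \Omega^w$ is a parametrised incarnation of the classical suspension--loop adjunction in a pointed bicomplete $\infty$-category, and my plan is to prove it via a Yoneda-style computation of parametrised mapping spaces.

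First, I would unfold both sides of the desired natural equivalence $\udl{\map}(\Sigma^w x, y) \simeq \udl{\map}(x, \Omega^w y)$ using the universal properties of the parametrised (co)limits in the construction. For the unique functor $\pi_P\colon P\to \terminalTCat$ with $P = w_*\udl{\Delta}^1\setminus\distributiveFinalObject$, the colim--const adjunction $\pi_{P!}\dashv \pi_P^*$ yields
\[
\udl{\map}(\Sigma^w x, y) \simeq \udl{\map}_{P}(\initialObject{\sigma}_{w*} x,\, \tconstant_y).
\]
Dually, with $P' = w_*\udl{\Delta}^1\setminus\distributiveInitialObject$ and the const--lim adjunction $\pi_{P'}^*\dashv \pi_{P'*}$,
\[
\udl{\map}(x, \Omega^w y) \simeq \udl{\map}_{P'}(\tconstant_x,\, (\initialObject{\sigma}_w^{\op})_! y).
\]

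Next, I would invoke the pointedness of $\udl{\sC}$ to obtain explicit descriptions of the two Kan extensions: $\initialObject{\sigma}_{w*} x$ is the diagram on $P$ taking the value $x$ at the initial object $\distributiveInitialObject$ and the zero object $\ast$ elsewhere, while dually $(\initialObject{\sigma}_w^{\op})_! y$ is the diagram on $P'$ with $y$ at the final object $\distributiveFinalObject$ and $\ast$ elsewhere. Substituting these into the mapping-space formulas above, both sides reduce to parametrised limits over the (opposite) punctured cube of a diagram in $\udl{\spc}$ whose only nontrivial value is $\udl{\map}(x, y)$ at the distinguished extremal point.

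The main identification, and the crux of the proof, is to match these two limits. For this I would use the self-duality of the parametrised cube: the equivalence $\udl{\Delta}^1 \simeq (\udl{\Delta}^1)^{\op}$ given by complementation is transported by the right adjoint $w_*$ to an equivalence $w_*\udl{\Delta}^1 \simeq (w_*\udl{\Delta}^1)^{\op}$ exchanging $\distributiveInitialObject$ and $\distributiveFinalObject$, hence inducing $P^{\op}\simeq P'$. Under this duality the two limit computations are transported into one another, completing the proof of the desired natural equivalence. The main technical obstacle is carefully verifying this self-duality of the parametrised cube and tracking that the star-shaped data is exchanged correctly; an alternative route avoiding self-duality is to construct the unit and counit of the adjunction directly from those of the component Kan-extension and (co)limit adjunctions, and verify the triangle identities by a universal-property chase (being careful, since neither $\initialObject{\sigma}_{w*}$ nor $(\initialObject{\sigma}_w^{\op})_!$ is itself a composition of left adjoints, but the pointedness of $\udl{\sC}$ bridges this gap).
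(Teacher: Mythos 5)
Your proposal is correct and follows essentially the same route as the paper's proof: both compute $\myuline{\map}(\Sigma^w X,Y)$ and $\myuline{\map}(X,\Omega^w Y)$ as limits over the punctured cube of a diagram that is $\myuline{\map}(X,Y)$ at the extremal vertex and zero elsewhere (using pointedness via $\myuline{\map}(0,Y)\simeq 0\simeq\myuline{\map}(X,0)$), and then match the two. The only difference is cosmetic: you spell out the self-duality $(w_*\udl{\Delta}^1)^{\op}\simeq w_*\udl{\Delta}^1$ identifying the two punctured cubes, which the paper uses silently when it rewrites $\myuline{\map}(\colim\sigma_X,Y)$ as a limit over $w_*\Delta^1\backslash\distributiveInitialObject$.
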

	\begin{proof}
		Let $X,Y\in V^*\udl{\sC}$. Write $\sigma_X\colon w_*\Delta^1\backslash\distributiveFinalObject\rightarrow  V^*\udl{\sC}$ and $\lambda_Y\colon w_*\Delta^1\backslash0\rightarrow V^*\udl{\sC}$ for the functors defining $\Sigma^wX$ and $\Omega^wY$ respectively. Since $\myuline{\map}_{ V^*\udl{\sC}}(0,Y)\simeq 0\simeq \myuline{\map}_{ V^*\udl{\sC}}(X,0)\in\udl{\spc}_*$, the claimed adjunction is then a  consequence of the  equivalences
		\[\myuline{\map}(\colim_{u_*\Delta^1\backslash\distributiveFinalObject}\sigma_X,Y)\simeq \lim_{u_*\Delta^1\backslash\emptyset}\myuline{\map}(\sigma_X,Y)\simeq \lim_{u_*\Delta^1\backslash\emptyset}\myuline{\map}(X,\lambda_Y)\simeq \myuline{\map}(X,\Omega^uY)\]
		which are clearly natural in $X$ and $Y$.
	\end{proof}
	
	\begin{defn}\label{defn:spherically_faithful_category}
		Let $\udl{\sC}$ be a pointed parametrised bicomplete category and $w \colon W \rightarrow V$ a morphism in $\finite_{\basecat}$. We say that it is:
		\begin{enumerate}[label=(\arabic*)]
			\item \textit{$w$--spherically faithful} if the endofunctor $\Sigma^w\colon V^*\udl{\sC}\rightarrow V^*\udl{\sC}$ of $\basecat_{/V}$--categories is fully faithful and $\Omega^w$ commutes sequential colimits,
			\item \textit{$w$--spherically invertible} if the endofunctor $\Sigma^w\colon V^*\udl{\sC}\rightarrow V^*\udl{\sC}$ of $\basecat_{/V}$--categories is  an equivalence.
		\end{enumerate}
		We say that $\udl{\sC}$ is \textit{spherically faithful} (resp. \textit{spherically invertible}) if $\udl{\sC}$ is $w$--spherically faithful (resp. $w$--spherically invertible) for all morphisms $w\colon W\rightarrow V$ in $\finite_{\basecat}$.
	\end{defn}
	
	\begin{example}
		In the case $\basecat=\orbit_G$ for a finite group $G$ and the map $w\colon W \rightarrow V$ is the unique map $G/H\rightarrow G/G$ in $\orbit_G$, the suspension $\Sigma^w$ is given by $S^{\widetilde{\mathbb{R}[G/H]}}\wedge-$ where $\widetilde{\mathbb{R}[G/H]}$ is the reduced permutation representation. Up to translating between model categories and $\infty$--categories, this may be deduced from \cite[Ex. 1.18]{dottoMoi}.
	\end{example}
	
	The reader may want to compare the definition above with \cite[Prop. 1.4.2.11]{lurieHA} to get a feeling for how it will become relevant for our excisiveness considerations later.
	
	\begin{example}
		Let $\udl{\sC}$ be a pointed parametrised bicomplete category equipped with a symmetric monoidal structure which preserves parametrised colimits in each variable. Suppose that the object $S^w\coloneqq \Sigma^w\unit\in\udl{\sC}$ is invertible. Then $\udl{\sC}$ is $w$--spherically invertible. One can show that under our assumptions on the symmetric monoidal structure on $\udl{\sC}$ there is a natural equivalence of endofunctors $\Sigma^w(-)\simeq S^w\otimes(-)$ on $\udl{\sC}$. Thus, since $S^w\otimes(-)$ is an automorphism, so is $\Sigma^w(-)$ and by \cref{prop:suspension_loop_adjunction}, the inverse is necessarily given by $\Omega^w(-)$.
	\end{example}
	
	To end this subsection, we provide a ``calculus'' for manipulating these spheres. Before stating the following proposition, we establish the following notation: for a map $w\colon W\rightarrow V$ in $\finite_{\basecat}$, we write $w_+$ for the map $W\sqcup V\xrightarrow{w\sqcup \id} V$.
	
	\begin{prop}\label{prop:calculus_of_spheres}
		Let $\udl{\sC}$ be a pointed $\basecat$--cocomplete category. Let $u\colon U \rightarrow V$ and $w\colon W \rightarrow V$ be maps in $\finite_{\basecat}$. Then:
		\begin{enumerate}[label=(\arabic*)]
			\item $\Sigma^{w_+}\simeq \Sigma\Sigma^{w}\colon V^*\udl{\sC}\rightarrow V^*\udl{\sC}$,
			\item $\Sigma^{u\sqcup w}\simeq \Sigma^{u_+}\Sigma^{w} \colon  V^*\udl{\sC}\rightarrow V^*\udl{\sC}$. In particular, we have $ \Sigma^{u\sqcup w_+}\simeq \Sigma^{u_+}\Sigma^{w_+}\colon  V^*\udl{\sC}\rightarrow V^*\udl{\sC}$,
			\item $w_{\otimes}S^1\simeq S^{w_+}\in\udl{\spc}_*$. Here, we are using the $\basecat$--symmetric monoidal structure on $\udl{\spc}_*$ coming from the equivalence $\udl{\spc}_*\simeq \udl{\spc}\otimes \spc_*$.
		\end{enumerate}
	\end{prop}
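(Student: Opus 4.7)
The plan is to establish (1) and (2) together via a uniform pushout decomposition of the punctured parametrised cubes, and then deduce (3) by combining (1) with the $\basecat$-symmetric monoidality of the indexed tensor. The key inputs will be that $(-)_*$ distributes over coproducts (so $(u \sqcup w)_*\udl{\Delta}^1 \simeq u_*\udl{\Delta}^1 \times w_*\udl{\Delta}^1$) and that a pushout of parametrised indexing categories induces a pushout of the associated colimits. For the purposes of this sketch, given a parametrised poset $P$ with final object, I write $\partial P \coloneqq P \setminus \distributiveFinalObject_P$.

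For (1), set $P \coloneqq w_*\udl{\Delta}^1$ and observe that the punctured cube over which $\Sigma^{w_+}(X)$ is computed decomposes as the pushout of parametrised posets
\[\partial(P \times \udl{\Delta}^1) \;\simeq\; (P \times \{0\}) \;\cup_{\partial P \times \{0\}}\; (\partial P \times \udl{\Delta}^1).\]
Thus $\Sigma^{w_+}(X)$ is the pushout of three colimits of the right Kan extended diagram, which takes value $X$ at $(\distributiveInitialObject_P, 0)$ and the zero object elsewhere. An iterated-colimit computation yields: the colimit over $P \times \{0\}$ vanishes (since $P$ has a final object valued at $0$); the colimit over $\partial P \times \udl{\Delta}^1$ also vanishes (integrate along the $\udl{\Delta}^1$ factor first); and the overlap $\partial P \times \{0\} \simeq \partial P$ contributes exactly $\Sigma^w X$. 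The resulting pushout $0 \amalg_{\Sigma^w X} 0 \simeq \Sigma \Sigma^w X$ gives (1).

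For (2), the same technique applied to the decomposition
\[\partial(P \times Q) \;\simeq\; (\partial P \times Q) \;\cup_{\partial P \times \partial Q}\; (P \times \partial Q)\]
with $P = u_*\udl{\Delta}^1$ and $Q = w_*\udl{\Delta}^1$ gives the two outer pieces as $0$ (integrating the un-punctured factor) while the overlap yields $\Sigma^u \Sigma^w X$; the pushout is then $\Sigma \Sigma^u \Sigma^w X$, which is identified with $\Sigma^{u_+}\Sigma^w X$ via (1).

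For (3), I use (1) to rewrite $S^{w_+} \simeq \Sigma S^w$, reducing the claim to $w_\otimes S^1 \simeq \Sigma S^w$. The $\basecat$-symmetric monoidality of the norm, specifically the compatibility $(u \sqcup v)_\otimes(X, Y) \simeq u_\otimes X \wedge v_\otimes Y$, combined with (1), (2), and the orbital decomposition of $w$ afforded by the atomic orbital hypothesis on $\basecat$, will reduce the claim by induction to the case where $w$ is a single orbit. The orbit case will be the main obstacle; the plan is to handle it by applying the distributivity of $w_\otimes$ to the pushout presentation $S^1 \simeq 0 \amalg_{S^0} 0$ (a parametrised Horev--Yanovski decomposition, c.f. \cref{section:decomposition_of_colimits}) and then identifying the resulting colimit with the one defining $S^{w_+}$ via a cofinality argument.
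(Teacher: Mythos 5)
Your proposal is correct. Part (1) is essentially the paper's own proof (same pushout decomposition of the punctured cube, same vanishing of the two outer colimits, same identification of the overlap with $\Sigma^w X$), and part (3) is, underneath the packaging, also the paper's argument: the ``main obstacle'' you defer is handled in the paper in one line by applying the distributivity of $w_\otimes$ over cofibre sequences (\cite[Prop.\ 3.2.8]{kaifNoncommMotives}) to $S^0\to 0\to S^1$, which produces a cofibre sequence $\Sigma^w S^0\to 0\to w_\otimes S^1$ because the normed diagram on $w_*\udl{\Delta}^1$ is $S^0$ at the initial vertex and $0$ elsewhere; since that argument already works for an arbitrary $w$, your induction over the orbit decomposition of $w$ is superfluous. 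Part (2) is where you genuinely diverge: the paper does not use the symmetric decomposition of $(u\sqcup w)_*\udl{\Delta}^1\backslash\distributiveFinalObject$ into $(u_*\udl{\Delta}^1\backslash\distributiveFinalObject)\times w_*\udl{\Delta}^1$ and $u_*\udl{\Delta}^1\times(w_*\udl{\Delta}^1\backslash\distributiveFinalObject)$ glued along their intersection; instead it rewrites the punctured cube as a parametrised colimit indexed by $(u_+)_*\udl{\Delta}^1\backslash\distributiveFinalObject$ of slice pieces $(u_*\udl{\Delta}^1)_{/a}\times(-)$, which yields $\Sigma^{u_+}\Sigma^w$ directly without invoking (1). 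Your three--term pushout is simpler and works: both outer colimits vanish by Fubini (integrate the unpunctured factor first, landing on the zero object at its final vertex, then use that the remaining punctured factor has an initial object), and the overlap gives $\Sigma^u\Sigma^w X$ because a diagram valued at the zero object away from the initial vertex is automatically right Kan extended from it; the resulting pushout $0\amalg_{\Sigma^u\Sigma^w X}0$ is then identified with $\Sigma^{u_+}\Sigma^w X$ via (1). The two points you should make explicit are precisely these: that the union of the two subposets is an honest pushout in $\cat_{\basecat}$ (every chain in the punctured product lies entirely in one of the two pieces, so the same appeal to \cite[Obs.\ 3.2.4]{kaifNoncommMotives} and \cref{prop:colimtsInCofreeCategories} that the paper makes for its own decompositions, together with \cref{prop:horev_yanovski}, applies), and the right--Kan--extension identification of the intermediate diagram just mentioned. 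With those spelled out, your route is a legitimate and arguably cleaner alternative to the paper's proof of (2).
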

	\begin{proof}
		To prove (1), we first argue that we may write the category $(w_+)_*\udl{\Delta}^1\backslash\distributiveFinalObject\simeq (\udl{\Delta}^1\times w_*\udl{\Delta}^1)\backslash\distributiveFinalObject$ as a pushout. By the fact that colimits of a diagram of posets and fully faithful functors is just given by the strict colimit in 1--categories (c.f. for example the argument in \cite[Obs. 3.2.4]{kaifNoncommMotives}), the required pushout  is given by
		\begin{center}
			\begin{tikzcd}
				\{0\}\times (w_*\Delta^1\backslash\distributiveFinalObject)\rar[hook]\dar[hook]\ar[dr,"\ulcorner",phantom]&\{0\}\times w_*\Delta^1\dar[hook]\\
				\Delta^1\times (w_*\Delta^1\backslash\distributiveFinalObject)\rar[hook]&(w_+)_*\Delta^1\backslash\distributiveFinalObject.
			\end{tikzcd}
		\end{center}
		Hence, for a fixed $X\in V^*\udl{\sC}$, writing $\sigma_X\colon (w_+)_*\Delta^1\backslash\distributiveFinalObject\rightarrow V^*\udl{\sC}$ for the diagram whose colimit is $\Sigma^{w_+}X$ (i.e. the diagram given by $X$ at the initial object and $0$ everywhere else), we obtain by \cref{prop:horev_yanovski} a pushout diagram
		\begin{center}
			\begin{tikzcd}
				\colim_{\{0\}\times (w_*\Delta^1\backslash\distributiveFinalObject)}\sigma_X\rar\dar\ar[dr,"\ulcorner",phantom] & \colim_{\{0\}\times w_*\Delta^1}\sigma_X\dar\\
				\colim_{\Delta^1\times (w_*\Delta^1\backslash\distributiveFinalObject)}\sigma_X\rar & \Sigma^{w_+}X.
			\end{tikzcd}
		\end{center}
		But then it is straightforward to see that we have identifications
		\[\colim_{\{0\}\times (w_*\Delta^1\backslash\distributiveFinalObject)}\sigma_X\simeq \Sigma^wX\quad\quad \colim_{\{0\}\times w_*\Delta^1}\sigma_X\simeq 0 \quad\quad \colim_{\Delta^1\times (w_*\Delta^1\backslash\distributiveFinalObject)}\sigma_X\simeq \colim_{\{1\}\times (w_*\Delta^1\backslash\distributiveFinalObject)}\sigma_X\simeq 0.\]
		Thus the colimit of the pushout diagram above is given by $\Sigma\Sigma^wX$, as claimed.
		
		For (2), we will decompose the punctured cube $(u\sqcup w)_*\Delta^1\backslash\distributiveFinalObject$ and rewrite it as a parametrised colimit indexed by $(u_+)_*\Delta^1\backslash\distributiveFinalObject$. To this end, consider first the diagram $ (u_*\udl{\Delta}^1)_{/-}\times w_*\udl{\Delta}^1 \colon u_*\udl{\Delta}^1 \longrightarrow \udl{\cat}$ sending $a\mapsto (u_*\udl{\Delta}^1)_{/a}\times w_*\udl{\Delta}^1$. The inclusion $w_*\udl{\Delta}^1\backslash\distributiveFinalObject\hookrightarrow w_*\udl{\Delta}^1$ then induces the diagram
		\begin{equation*}
			\overline{Q}\colon (u_+)_*\udl{\Delta}^1\simeq  u_*\udl{\Delta}^1\times \udl{\Delta}^1\longrightarrow\udl{\cat}\quad :: \quad (x,\epsilon)\mapsto \begin{cases}
				(u_*\udl{\Delta}^1)_{/a}\times (w_*\udl{\Delta}^1\backslash\distributiveFinalObject) & \text{ if } \epsilon=0\\
				(u_*\udl{\Delta}^1)_{/a}\times w_*\udl{\Delta}^1 & \text{ if } \epsilon=1.
			\end{cases}
		\end{equation*}
		We claim  that the colimit of the restricted diagram
		\[Q \colon (u_+)_*\udl{\Delta}^1\backslash\distributiveFinalObject \hookrightarrow (u_+)_*\udl{\Delta}^1 \xlongrightarrow{\overline{Q}}\udl{\cat}\]
		is precisely the punctured cube $(u\sqcup w)_*\Delta^1\backslash\distributiveFinalObject$. To see this, first note that all the maps encoded by the diagram $Q$ are fully faithful, which is a consequence of the fact that for an object $a$ in a poset $\udl{P}$, the map $\udl{P}_{/a}\rightarrow\udl{P}$ is fully faithful because being sliced over an object is just a condition in posets. Secondly, since $\udl{\cat}$ is a cofree parametrised category, we see from \cref{prop:colimtsInCofreeCategories} that the colimit of $Q$ may be computed fibrewise as the ordinary colimit in $\cat$. But again by \cite[Obs. 3.2.4]{kaifNoncommMotives}, the colimit of a diagram of posets along fully faithful functors is just the poset given by the union of posets, which may thus be viewed as a full subcategory of $(u\sqcup w)_*\udl{\Delta}^1$. To conclude, just note that $(u\sqcup w)_*\Delta^1\backslash\distributiveFinalObject$ is fibrewise given by the subposet of $(u\sqcup w)_*\Delta^1\simeq u_*\udl{\Delta}^1\times w_*\udl{\Delta}^1$ on those tuples $(a,b)$ where either $a$ is not the final object in $u_*\udl{\Delta}^1$ or $b$ is not the final object in $w_*\udl{\Delta}^1$. This subposet is clearly precisely the one covered by the colimit of $Q$.
		
		Given the claim, note first that since $(u_*\udl{\Delta}^1)_{/a}$ has a final object, for any category $\udl{J}$, the colimit functor $\udl{\func}((u_*\udl{\Delta}^1)_{/a}\times \udl{J},V^*\udl{\sC})\rightarrow V^*\udl{\sC}$ is equivalently computed as the colimit functor $\udl{\func}(\{a\}\times \udl{J}, V^*\udl{\sC})\rightarrow V^*\udl{\sC}$. For a fixed $a\in u_*\udl{\Delta}^1$ and $X\in V^*\udl{\sC}$, writing $\sigma_X\in\udl{\func}((u\sqcup w)_*\udl{\Delta}^1\backslash\distributiveFinalObject, V^*\udl{\sC})$ for the diagram defining $\Sigma^{u\sqcup w}X$, the restrictions of $\sigma_X$ to $(u_*\udl{\Delta}^1)_{/a}\times w_*\udl{\Delta}^1$ and $(u_*\udl{\Delta}^1)_{/a}\times (w_*\udl{\Delta}^1\backslash\distributiveFinalObject)$ thus have the following colimits:
		\begin{equation*}
			\colim_{(u_*\udl{\Delta}^1)_{/a}\times \udl{J}}\sigma_X \simeq \begin{cases}
				\Sigma^wX & \text{ if } a=\distributiveInitialObject \text{ and } \udl{J}= w_*\udl{\Delta}^1\backslash\distributiveFinalObject\\
				0 & \text{ else}.
			\end{cases}
		\end{equation*}
		Therefore, using the decomposition of the punctured cube above, we may factor the colimit functor $\udl{\func}((u\sqcup w)_*\udl{\Delta}^1\backslash\distributiveFinalObject, V^*\udl{\sC})\rightarrow  V^*\udl{\sC}$ by virtue of \cref{prop:horev_yanovski} as 
		\[\udl{\func}((u\sqcup w)_*\udl{\Delta}^1\backslash\distributiveFinalObject, V^*\udl{\sC})\simeq \lim_{(u_+)_*\udl{\Delta}^1\backslash\distributiveFinalObject}\udl{\func}(Q, V^*\udl{\sC}) \xlongrightarrow{\colim}\lim_{(u_+)_*\udl{\Delta}^1\backslash\distributiveFinalObject} V^*\udl{\sC}\xlongrightarrow{\colim} V^*\udl{\sC}\]
		to compute $\Sigma^{u\sqcup w}X$ as $\Sigma^{u_+}\Sigma^wX$, as required.

		Finally, for (3), since we have a cofibre sequence $S^0\rightarrow0\rightarrow S^1$, applying $w_{\otimes}$ gives us a cofibre sequence
		\[\colim_{w_*\Delta^1\backslash\distributiveFinalObject}\longrightarrow0\longrightarrow w_{\otimes}S^1\]
		by virtue of \cite[Prop. 3.2.8]{kaifNoncommMotives}, whence the claimed equivalence as desired.
	\end{proof}

	\subsection{Cubical excision}\label{subsec:cubical_excision}
	
	We show now that the parametrised cubes fit into the paradigm of \cref{section:excision_for_posets}. Let $\basecat$ be atomic orbital.

	\begin{lem}
		Let $w\colon W \rightarrow V$ be a morphism in $\finite_{\basecat}$. Then the  cube $w_*\udl{\Delta}^1\in\cat_{\basecat_{/V}}$ is a  complementable parametrised poset.
	\end{lem}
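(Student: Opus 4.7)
The plan is to reduce the claim to a fibrewise statement. Fact~\ref{prop:parametrisedCubesAreFibrewiseCubes} already tells us that $w_*\udl{\Delta}^1$ is a parametrised poset whose fibres are cubes $(\Delta^1)^n$. Via Construction~\ref{cons:pointsInCubesAreSubsets}, for any $b\colon B\to V$ in $\basecat_{/V}$ with orbit decomposition $b^*W \simeq U_1\sqcup\cdots\sqcup U_n$, the $B$--points of $w_*\udl{\Delta}^1$ are identified with subsets of $\{U_1,\ldots,U_n\}$; in particular each fibre becomes a finite Boolean lattice where meet and join are intersection and union of subsets, the initial object $\distributiveInitialObject$ is the empty subset, and the final object $\distributiveFinalObject$ is the full subset.

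For distributivity, meets and joins exist fibrewise by the Boolean structure. To check this is a genuine parametrised structure rather than merely fibrewise, I would observe that for a morphism $f\colon B'\to B$ in $\basecat_{/V}$, the transition functor on fibres is controlled by the further decomposition of $f^*b^*W$ into orbits: each orbit $V_j$ of $f^*b^*W$ maps uniquely into some orbit $U_{i(j)}$ of $b^*W$, producing a map of index sets $\{V_j\}\to\{U_i\}$, and the transition functor on $B$--points is the pullback of subsets along this map. Since preimages preserve unions and intersections, the fibrewise $\wedge$ and $\vee$ are compatible with restriction, and the distributivity identity $(a\wedge c)\vee(b\wedge c)\simeq (a\vee b)\wedge c$ descends from its classical validity in each Boolean lattice.

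For complementability, I fix an object $a$ of $w_*\udl{\Delta}^1$, i.e.\ a $B$--point for some $b\colon B\to V$, corresponding via Construction~\ref{cons:pointsInCubesAreSubsets} to a subset $S\subseteq\{U_1,\ldots,U_n\}$. I set $a^c$ to be the $B$--point associated to the complementary subset $\{U_1,\ldots,U_n\}\setminus S$. Then $a\vee a^c$ and $a\wedge a^c$ correspond respectively to the full and the empty subsets, which are precisely $\distributiveFinalObject$ and $\distributiveInitialObject$ in this fibre. Because all the constructions above were shown to be stable under basechange in $\basecat_{/V}$, this complement works in the parametrised sense.

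I do not anticipate a serious obstacle: the entire argument is driven by the fact that finite power sets are Boolean lattices, and the only point requiring attention is the bookkeeping that restriction along morphisms in $\basecat_{/V}$ respects the lattice operations, which in turn follows from the orbital hypothesis ensuring that orbit decompositions pull back along basechange.
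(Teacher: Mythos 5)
Your proof is correct, but it takes a genuinely different route from the paper. You work entirely fibrewise: you identify each fibre of $w_*\udl{\Delta}^1$ over $b\colon B\to V$ with the Boolean lattice of subsets of the orbit set of $b^*W$ (via \cref{cons:pointsInCubesAreSubsets}), check that the transition functors are preimage maps along the induced maps of orbit sets, and conclude because preimages preserve unions, intersections and complements. The paper instead establishes the existence of the parametrised (co)products abstractly: the diagonal $\Delta\colon \udl{\Delta}^1\to\udl{\Delta}^1\times\udl{\Delta}^1$ has both adjoints, and since indexed products preserve adjunctions, $w_*\Delta$ does too, which produces $\wedge$ and $\vee$ on $w_*\udl{\Delta}^1$ without ever unwinding the restriction functors; distributivity and complementability are then read off fibrewise from \cref{prop:parametrisedCubesAreFibrewiseCubes} together with the fact that fibrewise (co)products are computed pointwise. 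The trade-off: the paper's argument sidesteps the one step of yours that actually requires work, namely verifying that the transition functors really are given by pullback of subsets (this is true, and follows from basechange of orbit decompositions in an orbital category, but it is a nontrivial piece of bookkeeping you have only sketched); your argument, in exchange, yields an explicit combinatorial description of the lattice operations that the paper's formal argument does not make visible. Both establish the lemma; if you write yours up in full, the functoriality of the subset identification is the point to nail down carefully.
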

	\begin{proof}
		To see that $w_*\udl{\Delta}^1$ admits finite (co)products, just note that the diagonal functor $\Delta\colon \udl{\Delta}^1\rightarrow \udl{\Delta}^1\times\udl{\Delta}^1 $ admits both left and right adjoints since $\udl{\Delta}^1$ admits finite (co)products. By for instance \cite[Lem. 4.3.2]{kaifPresentable}, we see that $w_*\Delta\colon w_*\udl{\Delta}^1\rightarrow w_*(\udl{\Delta}^1\times\udl{\Delta}^1)\simeq w_*\udl{\Delta}^1\times w_*\udl{\Delta}^1 $ also admits both left and right adjoints, whence the existence of finite (co)products in $w_*\udl{\Delta}^1$. Now, ordinary cubes (i.e. finite products of $\Delta^1$ in $\cat$) are distributive and complementable. Since $w_*\udl{\Delta}^1$ are fibrewise cubes (i.e. fibrewise given by some finite product of $\Delta^1$'s) by \cref{prop:parametrisedCubesAreFibrewiseCubes} and (co)products are given by fibrewise (co)products by \cite[Prop. 5.8]{shahThesis}, we thus see that $w_*\udl{\Delta}^1$ is also distributive and  complementable. This completes the proof. 
	\end{proof}
	
	\begin{prop}\label{prop:singletons_subsets_are_excisable_poset}
		Let $w\colon W \rightarrow V$ be a morphism in $\finite_{\basecat}$. Then the full subcategories $\sigma\colon (w_!\terminalTCat)\tcone\subseteq w_*\udl{\Delta}^1$ and ${(w_*\udl{\Delta}^1)}\backslash\distributiveFinalObject\subseteq w_*\udl{\Delta}^1$ are downward--closed. In particular, these two subcategories provide excisable poset structures to $w_*\udl{\Delta}^1$.
	\end{prop}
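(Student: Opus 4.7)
The plan is to reduce the claim to a fibrewise computation, exploiting \cref{prop:parametrisedCubesAreFibrewiseCubes} which tells us that $w_*\udl{\Delta}^1$ is fibrewise a classical cube $(\Delta^1)^{\times n}$. Since the definition of downward-closedness in \cref{defn:excisable_poset} is manifestly fibrewise, once we identify each subposet fibrewise both claims will reduce to standard observations about cubes. First, one should verify that each subposet is actually a parametrised subposet: for the second, this reduces to checking that parametrised restrictions preserve the global final (which is automatic since $\distributiveFinalObject$ is defined as a global final object), while for the first this is built into the definition.

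For $(w_*\udl{\Delta}^1)\backslash\distributiveFinalObject$: At any $u\in\basecat_{/V}$, the restriction of $\distributiveFinalObject$ is the top of the cube $\posetExample(u)$. In any poset with a unique top, if $x\leq y$ and $x$ is the top, then $y$ is the top as well. Contrapositively, the complement of the top is downward closed, as needed.

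For $\sigma\colon(w_!\terminalTCat)\tcone\subseteq w_*\udl{\Delta}^1$: By \cref{fact:basics_of_singleton_inclusion}(1), the inclusion $\varphi_w$ is stable under base change, reducing us to the situation described in the remark after \cref{cons:singletonInclusion}. There, $\varphi_w$ is precisely the inclusion of subsets of cardinality at most one into the cube of all subsets. Since a subset of a set of cardinality at most one has cardinality at most one, this inclusion is downward closed. The fibrewise description thus establishes the property at every $u\in\basecat$.

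Since both subposets are nonempty (each contains the initial object $\distributiveInitialObject$, which is the cone point of $(w_!\terminalTCat)\tcone$ respectively an element distinct from $\distributiveFinalObject$), combining the above with the preceding lemma showing that $w_*\udl{\Delta}^1$ is complementable verifies all hypotheses of \cref{defn:excisable_poset}, yielding the two excisable poset structures. No serious obstacle arises here; the only bookkeeping is in translating the parametrised statement into its fibrewise content, and both fibrewise claims are elementary facts about cubes.
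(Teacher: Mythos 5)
Your treatment of $(w_*\udl{\Delta}^1)\backslash\distributiveFinalObject$ is correct and coincides with the paper's: by \cref{obs:top_points_in_cubes} the final object restricts fibrewise to the top of each cube, and the complement of the top element of a poset is downward closed.

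For the singleton inclusion, however, your key identification is wrong. Base change along a point $b\colon B\rightarrow V$ does \emph{not} put you in the situation of the remark after \cref{cons:singletonInclusion}: that remark requires $b^*W$ to be a disjoint union of copies of $B$ over $B$, which fails in general (for $\basecat=\orbit_G$, pulling back $G/H\rightarrow G/G$ along $G/K\rightarrow G/G$ produces orbits $G/(K\cap gHg^{-1})$ that need not be isomorphic to $G/K$). Consequently the fibre of $(w_!\terminalTCat)\tcone$ over $b$ is \emph{not} the poset of all subsets of $\pi_0(b^*W)$ of size at most one; unwinding \cref{cons:pointsInCubesAreSubsets} and \cref{cons:singletonInclusion}, it consists of $\distributiveInitialObject$ together with only those singletons $\{U\}$ for which the orbit $U\rightarrow B$ is an equivalence, i.e.\ those indexed by sections $B\rightarrow b^*W$ --- possibly none at all. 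Your conclusion happens to survive this correction, since the only elements below a singleton are itself and the empty set, so $\distributiveInitialObject$ together with an arbitrary collection of singletons is downward closed; but as written the proof asserts a false description of the subposet and needs to be patched with the correct one. For comparison, the paper avoids computing fibres over arbitrary points altogether: it encodes downward-closedness as a lifting problem against $\sigma$, transposes it across the $w^*\dashv w_*$ adjunction to a lifting problem against the explicit map $(w^*w_!\terminalTCat)\tcone\rightarrow\udl{\Delta}^1$ arising from the atomic orbital pullback \cref{eqn:singletonInclusionPullback}, and finishes with a short case analysis there. Your fibrewise route is equally legitimate once the fibre is identified correctly.
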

	\begin{proof}
		The case of ${(w_*\udl{\Delta}^1)}\backslash\distributiveFinalObject\subseteq w_*\udl{\Delta}^1$ is clear since this is fibrewise given by punctured cubes away from the final objects. To deal with the case $\sigma\colon (w_!\terminalTCat)\tcone\subseteq w_*\udl{\Delta}^1$, by restricting if necessary (and using that singleton inclusions are stable under basechange by \cref{fact:basics_of_singleton_inclusion} (1), suppose we have a morphism in $w_*\udl{\Delta}^1$ with target in $(w_!\terminalTCat)\tcone$. We would like to show that the target lies in $(w_!\terminalTCat)\tcone$ as well. This may be encoded as the dashed lifting problem in the left diagram
		\begin{center}
			\begin{tikzcd}
				\udl{\Delta}^0\dar["\target"'] \rar & (w_!\terminalTCat)\tcone\dar[hook, "\sigma"] && \udl{\Delta}^0\dar["\target"'] \rar & (w^*w_!\terminalTCat)\tcone\dar[hook, "\overline{\sigma}"]\\
				\udl{\Delta}^1\rar\ar[ur, dashed] & w_*\udl{\Delta}^1 && \udl{\Delta}^1\rar\ar[ur, dashed] & \udl{\Delta}^1
			\end{tikzcd}
		\end{center}
		By the $w^*\dashv w_*$ adjunction, this is equivalent to the lifting problem in the right diagram. Now by construction of the map $\sigma$ and using the notation of \cref{eqn:singletonInclusionPullback} for the atomic orbital pullback, the map $\overline{\sigma}\colon (w^*w_!\terminalTCat)\tcone \simeq (\terminalTCat\sqcup \overline{c}_!c^*\terminalTCat)\tcone\rightarrow \udl{\Delta}^1$ is given by the maps $\terminalTCat \xrightarrow{1} \udl{\Delta}^1$, $\overline{c}_!c^*\terminalTCat\xrightarrow{0}\udl{\Delta}^1$, and $\distributiveInitialObject\mapsto 0$. Thus, by a simple case analysis, it is easy to see via this description that the dashed lift in the right square above always exists, whence the dashed lift in the left square as desired.
	\end{proof}
	
	\begin{example}
		By virtue of \cref{prop:singletons_subsets_are_excisable_poset}, we obtain the two types of excisable posets that will play a role in the sequel. Let $w\colon W \rightarrow V$ be a morphism in $\finite_{\basecat}$.
		\begin{enumerate}[label=(\alph*)]
			\item We call the tuple of data \[\varphi_w\coloneqq \big(w_*\Delta^1,\:\: \subPoset{(w_*\Delta^1)}\coloneqq (w_!\ast)\tcone,\:\: \varphi\colon (w_!\ast)\tcone\subseteq w_*\Delta^1\big)\] the \textit{singleton $w$--cube.}
			\item We call the tuple of data \[\sigma_w\coloneqq \big(w_*\Delta^1,\:\: \subPoset{(w_*\Delta^1)}\coloneqq w_*\Delta^1\backslash\distributiveFinalObject,\:\:  w_*\Delta^1\backslash\distributiveFinalObject\subseteq w_*\Delta^1\big)\] the \textit{spherical $w$--cube,} the name being justified by \cref{cor:parametrisedSuspensions}.
		\end{enumerate}
	\end{example}
	
	\begin{defn}
		Let $w\colon W \rightarrow V$ be a morphism in $\finite_{\basecat}$, $\underline{\sC},\underline{\D}\in\cat_{\basecat}$ categories having appropriate parametrised (co)limits, and $F \colon \underline{\sC}\rightarrow \underline{\D}$ a functor. We say that $F$ is \textit{singleton $w$--excisive (resp. spherically $w$--excisive)} if the morphism $V^*F\colon V^*\udl{\sC}\rightarrow V^*\udl{\D}$ in $\cat_{\basecat_{/V}}$ is $\varphi_w$--excisive (resp. $\sigma_w$--excisive), in the sense of \cref{defn:excisive_functors}. 
	\end{defn}
	
	The following simple observation will be crucial for our purposes. 
	
	\begin{obs}\label{obs:spherical_excisive_implies_singleton_excisive}
		A spherically $w$--excisive functor is automatically also singleton $w$--excisive. This is an immediate inspection of \cref{defn:excisive_functors}, as well as the fact that the inclusion $\varphi\colon (w_!\ast)\tcone\subseteq w_*\Delta^1$ factors through the inclusion $w_*\Delta^1\backslash\distributiveFinalObject\subseteq w_*\Delta^1$.
	\end{obs}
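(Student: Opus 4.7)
The plan is essentially the one-line argument already indicated in the statement: exploit the factorisation of the singleton inclusion through the punctured cube, and observe that left Kan extension along a composite is the composite of left Kan extensions.

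In more detail, write the singleton inclusion as the composite
\[
\varphi \colon (w_!\terminalTCat)\tcone \xhookrightarrow{\iota} w_*\udl{\Delta}^1 \backslash \distributiveFinalObject \xhookrightarrow{j} w_*\udl{\Delta}^1,
\]
where $\iota$ lands in the punctured cube because $(w_!\terminalTCat)\tcone$ does not contain the final object $\distributiveFinalObject\in w_*\udl{\Delta}^1$ (its only global points are $\distributiveInitialObject$ and the $B$--points from \cref{obs:top_points_in_cubes}, and the latter correspond by \cref{cons:pointsInCubesAreSubsets} to singleton subsets, none of which is the top vertex). Then left Kan extension along a composite of fully faithful functors is the composite of left Kan extensions, so $\varphi_! \simeq j_! \iota_!$. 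Consequently, every $\varphi_w$--cocartesian diagram $X \simeq \varphi_! \overline{X}$ is of the form $j_!(\iota_! \overline{X})$ and thus lies in the image of $j_!$, i.e. is $\sigma_w$--cocartesian.

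Applying the hypothesis that $F$ is spherically $w$--excisive then shows that $V^*F\circ X$ is cartesian, which is exactly the content of singleton $w$--excisiveness. There is no real obstacle here; the only thing to verify is the set--theoretic/posetic fact that $(w_!\terminalTCat)\tcone \subseteq w_*\udl{\Delta}^1$ avoids the final object, which is immediate from the description in \cref{cons:pointsInCubesAreSubsets} of points of $w_*\udl{\Delta}^1$ as subsets of the orbits of $w$.
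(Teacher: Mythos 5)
This is precisely the paper's argument: the observation is justified there by the very same factorisation of $\varphi_w$ through $w_*\Delta^1\backslash\distributiveFinalObject$ together with an inspection of \cref{defn:excisive_functors}, and your expansion via $\varphi_!\simeq j_!\iota_!$ (using that both excisable structures share the ambient poset $w_*\Delta^1$ and hence the same notion of cartesian diagram) is exactly the intended reading. One caveat, which applies equally to the paper's own phrasing: the singletons miss the top vertex only when the complement $C$ in \cref{eqn:singletonInclusionPullback} meets every orbit of $W$ --- for $w$ an equivalence one has $C=\varnothing$, $\varphi_w$ is the identity of $\Delta^1$, the factorisation fails, and the observation itself is false (spherical $\id$--excision is vacuous while singleton $\id$--excision forces $F$ to invert every morphism) --- so both your justification of the factorisation and the statement should be read as excluding such degenerate $w$.
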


	\subsection{Semiadditivity and stability}\label{subsec:semiadditivity_stability}
	In this final subsection, we put all the ingredients together and explain how to use singleton $w$--excisiveness to encode $w$--semiadditivity in the key \cref{keylem:excisive_implies_semiadditivity_induction}. We then use \cref{obs:spherical_excisive_implies_singleton_excisive} to prove the remaining two main theorems. 

	Before setting out on the work in earnest, let us remind the reader of the following standard observation from Goodwillie calculus as motivation and also as an ingredient we shall need later.
	
	\begin{obs}\label{obs:1-excisive_semiadditive}
		Let $\sC,\D$ be pointed categories with finite (co)products and $F\colon \sC\rightarrow\D$ be a reduced functor, i.e. it preserves the zero object. If it is 1--excisive, then it is a semiadditive functor, i.e. the canonically constructed map
		\[F(X\sqcup Y) \longrightarrow F(X)\times F(Y)\] is an equivalence. This is because the left square in 
		\begin{center}
			\begin{tikzcd}
				X\sqcup Y \rar\dar & X \dar && F(X\sqcup Y) \rar\dar & F(X) \dar\\
				Y\rar & 0 && F(Y)\rar & 0
			\end{tikzcd}
		\end{center}
		is a pushout in $\sC$ (this is standard, but see also \cref{setting:pointedReducedExcisive} for the more general fact in the parametrised setup). Hence, by 1--excisiveness of $F$, the right square is a pullback in $\D$, i.e. $F(X\sqcup Y)\simeq F(X)\times F(Y)$ as claimed.
	\end{obs}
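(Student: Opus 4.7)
The plan is to exhibit $X \sqcup Y$ as a pushout in $\sC$ of a very particular square whose other three corners are $X$, $Y$, and the zero object $0$, and then invoke $1$--excisiveness to transport this square into a pullback square in $\D$.

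First I would verify that for any two objects $X, Y$ of a pointed category $\sC$ with finite coproducts, the square
\[
\begin{tikzcd}
X\sqcup Y \rar\dar & X \dar \\
Y \rar & 0
\end{tikzcd}
\]
whose maps are the canonical collapse maps (using the zero morphisms $X \to 0$ and $Y \to 0$), is a pushout. This is a standard manipulation: the universal property of the pushout $X \sqcup_{X\sqcup Y} Y$ (using the two projections $X\sqcup Y \to X$ and $X \sqcup Y \to Y$ collapsing the other summand to $0$) reduces to the universal property of the coproduct once one uses pointedness to produce the required zero maps, and the verification is essentially trivial for posets/1--categories/$\infty$--categories alike.

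Second, I would apply $F$ to this square, producing the square
\[
\begin{tikzcd}
F(X\sqcup Y) \rar\dar & F(X) \dar \\
F(Y) \rar & F(0).
\end{tikzcd}
\]
Since $F$ is reduced, $F(0) \simeq 0 \in \D$. By $1$--excisiveness, $F$ sends the pushout square above in $\sC$ to a pullback square in $\D$. Since the pullback over a zero object is by definition the product, this yields $F(X\sqcup Y) \simeq F(X) \times F(Y)$ via the canonical comparison map, which is exactly the statement of semiadditivity for $F$.

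I expect no serious obstacle here, since all the difficulty is packaged into the input hypothesis that $F$ is $1$--excisive (which is precisely the condition that it converts pushouts into pullbacks) and into the pointedness hypothesis on $\sC$ and $\D$ (which ensures both that the relevant zero maps exist in $\sC$ to form the square and that the pullback over $0$ computes the product in $\D$). The only place worth pausing is to check that the comparison map $F(X \sqcup Y) \to F(X) \times F(Y)$ induced by the coproduct structure agrees up to homotopy with the map arising from the pullback identification — but this is a general abstract statement about how canonical maps in pointed categories with $(\text{co})$products factor through zero objects, and so poses no real difficulty.
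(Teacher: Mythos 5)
Your proposal is correct and follows exactly the paper's argument: exhibit the square with corners $X\sqcup Y$, $X$, $Y$, $0$ as a pushout via the collapse maps, apply $F$, use reducedness to identify $F(0)\simeq 0$, and invoke $1$--excisiveness to conclude the image square is a pullback, hence exhibits $F(X\sqcup Y)$ as the product $F(X)\times F(Y)$. No gaps; this matches the paper's proof of the observation.
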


	\begin{setting}\label{setting:pointedReducedExcisive}
		Suppose $\basecat$ has a final object $T$ and $w\colon W \rightarrow T$ is the unique map. Suppose $\underline{\sC}$ is pointed, admits finite indexed (co)products, and suppose  we have $\partial \colon \underline{\Delta}^1\rightarrow w^*\underline{\sC}$ given by $(X\rightarrow \ast)$. Note that this is a singleton (co)cartesian diagram. Suppose we have a functor $F\colon \underline{\sC}\rightarrow \underline{\D}$ which is $w$--excisive and preserves zero objects. 
		
		From these, we can extract the two composites
		\begin{equation}\label{eqn:cartesian_and_cocartesian_cubes}
			\alpha\colon w_*\underline{\Delta}^1\xlongrightarrow{w_*\partial} w_*w^*\underline{\sC} \xlongrightarrow{w_!}\underline{\sC}\quad\quad \beta\colon w_*\underline{\Delta}^1\xlongrightarrow{w_*\partial} w_*w^*\underline{\sC} \xlongrightarrow{w_*}\underline{\sC}
		\end{equation}
		By \cref{prop:productsOfStronglyCocartesianCubes} and its dual, these are a singleton cocartesian $w$--cube and a singleton cartesian $w$--cube, respectively. Note that $\alpha(\distributiveInitialObject)\simeq w_!X$, $\alpha(\distributiveFinalObject) = w_!\ast \simeq \ast$, $\beta(\distributiveInitialObject)\simeq w_*X$, and $\beta(W)\simeq w_*\ast\simeq \ast$. Here, the equivalences $w_!\ast\simeq \ast\simeq w_*\ast$ is by the pointedness of $\underline{\sC}$. 
	\end{setting}

	\begin{obs}
		Let us unwind what the cube $\alpha\colon w_*\underline{\Delta}^1\rightarrow\underline{\sC}$ looks like pointwise. Similar considerations also hold for the cube $\beta$. Consider the value of $\alpha$ at the  $f$--point $x\in w_*\underline{\Delta}^1$. More precisely, consider
		\[f_!\terminalTCat\xlongrightarrow{x} w_*\underline{\Delta}^1\xlongrightarrow{w_*\partial} w_*w^*\underline{\sC}\xlongrightarrow{w_!}\underline{\sC}. \] To work this out more explicitly, consider the pullback 
		\begin{equation}\label{eqn:pullback_beck_chevalley}
			\begin{tikzcd}
				\coprod_{i\in I}Z_i\rar["\sqcup_i{g}_i"]\dar["\sqcup_iu_i"']\ar[dr,phantom, very near start, "\lrcorner"]& W\dar["w"]\\
				F \rar["f"] & T
			\end{tikzcd}
		\end{equation}
		which yields the equivalence $f^*w_*\simeq \prod_iu_{i*}g_i^*$. Thus by adjointing over the $f$--point datum, we obtain
		\[\terminalTCat\xlongrightarrow{\overline{x}} f^*w_*\underline{\Delta}^1\simeq \prod_iu_{i*}g_i^*\underline{\Delta}^1\xlongrightarrow{\prod_iu_{i*}g_i^*\partial} \prod_{i}u_{i*}u_i^*f^*\underline{\sC}\xlongrightarrow{\sqcup_iu_{i!}}f^*\underline{\sC}. \] Since the map $\terminalTCat\xlongrightarrow{\overline{x}} f^*w_*\underline{\Delta}^1\simeq \prod_iu_{i*}g_i^*\underline{\Delta}^1$ chooses either $0$ or $1$ in each of the $i$ factors, we thus see that $\alpha$, when evaluated at the $f$--point $x$, gives
		\[\coprod_{j\in J}u_{j!}g_j^*X\] where $J\subseteq I$ is a subset.
	\end{obs}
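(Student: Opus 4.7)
The plan is to compute $f^*\alpha(x)$ by adjoining the $f$-point $x$ across $f_! \dashv f^*$ and then systematically applying the parametrised Beck--Chevalley equivalences coming from the atomic orbital pullback \cref{eqn:pullback_beck_chevalley}. The atomic orbital hypothesis is precisely what guarantees that the indexed product $f^*w_*$ splits as $\prod_i u_{i*}g_i^*$ and, dually, that $f^*w_!$ splits as $\sqcup_i u_{i!}g_i^*$, so once these two decompositions are in hand the rest is essentially an unpacking of tuples.

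First I would unwind the data of the $f$-point. Adjointing $x$ yields $\overline{x}\colon \terminalTCat \to f^*w_*\udl{\Delta}^1 \simeq \prod_i u_{i*}g_i^*\udl{\Delta}^1$, which under the product decomposition is a tuple of maps $\terminalTCat \to u_{i*}g_i^*\udl{\Delta}^1$. Adjointing each of these across $u_i^* \dashv u_{i*}$ gives maps $\terminalTCat \simeq u_i^*\terminalTCat \to g_i^*\udl{\Delta}^1 \simeq \udl{\Delta}^1$; since $\udl{\Delta}^1$ is the constant parametrised category, each such map is either the constant functor at $0$ or the constant functor at $1$, so the data of $x$ is equivalently a subset $J \subseteq I$ (the indices sent to $0$).

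Next I would basechange the composite defining $\alpha$ along $f$. Using the two Beck--Chevalley equivalences above, the basechanged composite takes the form
\[
\prod_i u_{i*}g_i^*\udl{\Delta}^1 \xlongrightarrow{\prod_i u_{i*}g_i^*\partial} \prod_i u_{i*}u_i^*f^*\udl{\sC} \xlongrightarrow{\sqcup_i u_{i!}} f^*\udl{\sC}.
\]
Evaluating the middle arrow on $\overline{x}$: since $\partial = (X \to \ast)$, the $i$th entry lands on $u_{i*}u_i^*(f^*X)$ if $i \in J$ and on $u_{i*}u_i^*(\ast) \simeq \ast$ otherwise, using pointedness of $\udl{\sC}$. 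Applying $\sqcup_i u_{i!}$ and noting that $u_{i!}\ast \simeq \ast$ allows the trivial factors to be absorbed into the coproduct, leaving $\coprod_{j \in J} u_{j!}g_j^*X$, as claimed.

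The only point requiring care is verifying the two Beck--Chevalley equivalences $f^*w_* \simeq \prod_i u_{i*}g_i^*$ and $f^*w_! \simeq \sqcup_i u_{i!}g_i^*$ associated to the orbital decomposition of $F\times_T W$; this is the same input exploited in \cref{cons:singletonInclusion} and requires nothing beyond atomicity and orbitality of $\basecat$. Modulo this, the computation is a bookkeeping exercise in adjunctions, and I do not expect any deeper obstacle.
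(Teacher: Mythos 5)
Your computation is correct and follows the same route as the paper: adjoint the $f$--point across $f_!\dashv f^*$, invoke the Beck--Chevalley decompositions $f^*w_*\simeq\prod_iu_{i*}g_i^*$ and $f^*w_!\simeq\sqcup_iu_{i!}g_i^*$ arising from the orbit decomposition of the pullback, and observe that $\overline{x}$ selects $0$ or $1$ in each factor so that $\partial=(X\to\ast)$ together with pointedness ($u_{i!}\ast\simeq\ast$) leaves exactly $\coprod_{j\in J}u_{j!}g_j^*X$. Your extra care in identifying $J$ as the set of indices sent to $0$ is consistent with the conventions of \cref{cons:singletonInclusion} and \cref{setting:pointedReducedExcisive}.
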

	
	\begin{example}
		Visually, in the nonequivariant case of a 3--cube, the map $\alpha$ for the object $(A,B,C)\in \sC^3$ from \cref{eqn:cartesian_and_cocartesian_cubes} encodes the diagram
		\begin{center}
			\begin{tikzcd}
				& C \ar[rr] && \ast\\
				A\vee C \ar[rr] \ar[ur]&& A\ar[ur]\\
				& B\vee C \ar[rr]\ar[uu]&& B\ar[uu]\\
				A\vee B\vee C \ar[rr]\ar[ur]\ar[uu] && A\vee B\ar[uu]\ar[ur]
			\end{tikzcd}
		\end{center}
		which one can check by hand is singleton cocartesian. For the map $\beta$, one replaces all the $\vee$ with $\times$ in the diagram, and one can also check by hand that it is singleton cartesian.
	\end{example}
	
	The two cubical diagrams from \cref{eqn:cartesian_and_cocartesian_cubes} are related by the semiadditivity norm map, as we construct next.
	
	\begin{cons}[Norm transformation on cubes]
		We construct a transformation $\aleph\colon \alpha\Rightarrow \beta$ which is a morphism in $\udl{\func}(w_*\underline{\Delta}^1,\underline{\sC})$ which we will argue is pointwise given by the appropriate norm transformations. The typical construction of the norm map may be done internally to $\basecat$, giving a norm transformation $w_!\Rightarrow w_*\colon w_*w^*\underline{\sC}\rightarrow \underline{\sC}$, we may define $\aleph$ simply as the transformation
		\begin{center}
			\begin{tikzcd}
				w_*\underline{\Delta}^1 \rar["w_*\partial"] & w_*w^*\underline{\sC}\ar[rrr, bend left = 30, "w_!"description] \ar[rrr,bend left = -30, "w_*"'description]\ar[rrr,phantom, "\Downarrow \text{ norm}"] &&& \underline{\sC}.
			\end{tikzcd}
		\end{center}
		
		Given this construction, we are left to argue that it is pointwise given by the usual norm maps. So let us consider an arbitrary $f$--point in the diagrams $\alpha$ and $\beta$. We are thus considering the datum 
		\begin{equation}\label{eqn:f_point_unsimplified}
			\begin{tikzcd}
				f_!\terminalTCat\rar & w_*\underline{\Delta}^1 \rar["w_*\partial"] & w_*w^*\underline{\sC}\ar[rrr, bend left = 30, "w_!"description] \ar[rrr,bend left = -30, "w_*"'description]\ar[rrr,phantom, "\Downarrow  \text{ norm}"] &&& \underline{\sC}.
			\end{tikzcd}
		\end{equation}
		which, by adjunction, is equivalently the datum of a morphism in $f^*\underline{\sC}$. To make this more explicit, consider the pullback \cref{eqn:pullback_beck_chevalley}.   Hence, we get an equivalence $f^*w_*\simeq \prod_iu_{i*}g_i^*$. Thus, since the norm maps are stable under basechange, we obtain by adjointing $f_!$ in \cref{eqn:f_point_unsimplified} the diagram
		\begin{center}\small
			\begin{tikzcd}
				\terminalTCat\rightarrow f^*w_*\underline{\Delta}^1 \rar["f^*w_*\partial"] & f^*w_*w^*\underline{\sC}\rar["\simeq"] & \prod_iu_{i*}g_i^*w^*\underline{\sC}\simeq\prod_iu_{i*}u_i^*f^*\underline{\sC}\ar[rrr, bend left = 30, "\sqcup_iu_{i!}"description] \ar[rrr,bend left = -30, "\times_iu_{i*}"'description]\ar[rrr,phantom, "\Downarrow  \text{ norm}"] &&& f^*\underline{\sC}.
			\end{tikzcd}
		\end{center}
		Therefore, we indeed see that evaluating the transformation $\aleph\colon\alpha\Rightarrow \beta$ at the given $f$--point provides the norm map.
		
		Furthermore, given a functor $F\colon \underline{\sC}\rightarrow \underline{\D}$, we obtain a canonical transformation
		\begin{equation}\label{eqn:norm_map_and_colimit_interchange_on_cubes}
			F\alpha \xlongrightarrow{F\aleph} F\beta \xlongrightarrow{\text{ limit interchange }} \beta_F
		\end{equation}
		of functors $w_*\underline{\Delta}^1\rightarrow \underline{\D}$.
	\end{cons}

	\begin{prop}\label{prop:equivalence_of_cartesian_diagrams_away_from_top}
		Suppose $F\Rightarrow G\colon w_* \Delta^1 \rightarrow \udl{\sC}$ is a natural transformation of  two cartesian diagrams such that $F(\distributiveFinalObject)\simeq G(\distributiveFinalObject)$ and $s^*F\simeq s^* G\colon s^*w_*\Delta^1\backslash\distributiveInitialObject\rightarrow s^*\udl{\sC}$ for all maps $s\colon U\rightarrow T$ which are nonequivalences. Then $F$ is naturally equivalent to $G$.
	\end{prop}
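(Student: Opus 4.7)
The plan is to use the cartesianness of $F$ and $G$ to reduce the problem to checking the natural transformation on $w_*\udl{\Delta}^1 \backslash \distributiveInitialObject$, then to verify this pointwise on parametrised points. Writing $\mathring{\sigma}\colon w_*\udl{\Delta}^1 \backslash \distributiveInitialObject \hookrightarrow w_*\udl{\Delta}^1$ for the inclusion, cartesianness of $F$ and $G$ yields equivalences $F \simeq \mathring{\sigma}_* \mathring{\sigma}^* F$ and $G \simeq \mathring{\sigma}_* \mathring{\sigma}^* G$. Since $\mathring{\sigma}_*$ preserves equivalences, it suffices to show that the restricted natural transformation $\mathring{\sigma}^* F \Rightarrow \mathring{\sigma}^* G$ is already an equivalence.

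To verify this, I would test pointwise on parametrised points at each level $U \in \basecat$. By \cref{obs:top_points_in_cubes}, at the final level $U = T$ the only parametrised points of $w_*\udl{\Delta}^1$ are $\distributiveInitialObject$ and $\distributiveFinalObject$; after removing $\distributiveInitialObject$ only $\distributiveFinalObject$ remains, where the hypothesis $F(\distributiveFinalObject) \simeq G(\distributiveFinalObject)$ gives the required equivalence. For any other level $U$, the structure map $s\colon U \rightarrow T$ is a non-equivalence, and every such $U$-point of $w_*\udl{\Delta}^1 \backslash \distributiveInitialObject$ is captured by the restricted diagram $s^*(w_*\udl{\Delta}^1 \backslash \distributiveInitialObject)$, on which the hypothesis $s^* F \simeq s^* G$ supplies the required equivalence pointwise. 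Combining the two cases gives $\mathring{\sigma}^* F \simeq \mathring{\sigma}^* G$, and hence $F \simeq G$ upon applying $\mathring{\sigma}_*$.

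The main obstacle is essentially bookkeeping: one must ensure that the dichotomy ``$T$-points versus $U$-points for $U \not\simeq T$'' is truly exhaustive for testing parametrised natural equivalences on $w_*\udl{\Delta}^1 \backslash \distributiveInitialObject$, which is standard in the parametrised formalism once one invokes that pointwise testing happens on all parametrised points arising at each level of $\basecat$. Granting this, the result reduces to matching the two hypotheses of the proposition with these two cases.
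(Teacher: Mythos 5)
Your proposal is correct and follows essentially the same route as the paper: both use that cartesianness means $F$ and $G$ are right Kan extended from $w_*\Delta^1\backslash\distributiveInitialObject$, so it suffices to compare them there, and then split the pointwise check into the global point $\distributiveFinalObject$ (the only remaining one by \cref{obs:top_points_in_cubes}) and the lower fibres covered by the $s^*F\simeq s^*G$ hypothesis.
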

	\begin{proof}
		Because $w_*\Delta^1\simeq (w_*\Delta^1\backslash\distributiveInitialObject)\tcone$, we find that both $F$ and $G$ are right Kan-extended from this full subcategory. In particular it suffices to identify the two functors on this subcategory. This is precisely guaranteed by the hypotheses since the only global objects on $w_*\Delta^1$ are $\distributiveInitialObject$ and $\distributiveFinalObject$, and we have assumed that $F$ and $G$ agree on $\distributiveFinalObject$ as well as on the lower fibres. 
	\end{proof}
	
	To prepare for our inductive proofs, we make some general observations about atomic orbital categories.

	\begin{defn}
		Let $\basecat$ be an atomic orbital category. We say that \textit{$\basecat$ is of length $n\in \mathbb{N}_{\geq0}\cup \{\infty\}$} if the longest chain of nonequivalent morphisms in $\basecat$ is  $n$, i.e. if we have morphisms $a_1\xrightarrow{f_1}a_2\xrightarrow{f_2}\cdots \xrightarrow{f_i} a_{i+1}$ which are all not equivalences, then $i\leq n$ and there exists a length $n$ chain of nonequivalences. We say that it has \textit{finite length} if it has length $n$ for some $n$.
	\end{defn}
	
	\begin{obs}\label{obs:chain_length_induction}
		Note that if $\basecat$ has a final object and has  length  $0$, then $\basecat\simeq \ast$. Furthermore, note that if $f\colon U \rightarrow W$ is a morphism in $\basecat$ which is not an equivalence, then since the forgetful functor $f\colon \basecat_{/U}\rightarrow\basecat_{/W}$ is conservative, writing $\ell_U, \ell_W$ for the lengths of $\basecat_{/U}$ and $\basecat_{/W}$ respectively, we have $\ell_U \leq \ell_W-1$. This allows us to perform inductive arguments based on the chain lengths if they are all finite.
	\end{obs}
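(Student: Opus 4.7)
My plan is to verify the two claims separately.

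For the first, suppose $\basecat$ has length $0$ and a terminal object $T$. Length $0$ forces every morphism of $\basecat$ to be an equivalence, so in particular the canonical map $X\to T$ is an equivalence for every $X\in\basecat$. Thus every object is equivalent to $T$, and since $\map_{\basecat}(T,T)\simeq\ast$ by finality, we conclude $\basecat\simeq\ast$.

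For the second, let $f\colon U\to W$ be a non-equivalence and fix a chain $a_0\to\cdots\to a_{\ell_U}$ of non-equivalences in $\basecat_{/U}$ realising the length of $\basecat_{/U}$. Post-composition with $f$ defines a functor $f_!\colon\basecat_{/U}\to\basecat_{/W}$ which is conservative, since a morphism in either slice is an equivalence if and only if its underlying morphism in $\basecat$ is. Pushing our chain forward therefore produces a chain of length $\ell_U$ in $\basecat_{/W}$ whose transitions are still non-equivalences. Writing $a_{\ell_U}=(B,b\colon B\to U)$, the plan is to extend this chain by appending the unique morphism $(B,f\circ b)\to(W,\id_W)$ in $\basecat_{/W}$, which is represented in $\basecat$ by $f\circ b\colon B\to W$.

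The main point is to verify that this appended morphism is not an equivalence; this is where atomicity enters. Suppose for contradiction that $f\circ b\colon B\to W$ is an equivalence with inverse $h\colon W\to B$, so that in particular $(h\circ f)\circ b=\id_B$. Atomicity applied to this identity forces both $b$ and $h\circ f$ to be equivalences, and the defining inverse relation makes $h$ itself an equivalence. Consequently $f=h^{-1}\circ(h\circ f)$ is a composition of equivalences, contradicting the assumption that $f$ is not an equivalence. Hence the extended chain has length $\ell_U+1$ in $\basecat_{/W}$, yielding $\ell_U\leq\ell_W-1$.
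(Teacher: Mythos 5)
Your proof is correct and follows the same route the paper sketches: push a maximal chain of non-equivalences forward along the conservative post-composition functor $\basecat_{/U}\to\basecat_{/W}$ and append the unique map to the terminal object $(W,\id_W)$. The paper's inline justification cites only conservativity, which by itself yields just $\ell_U\le\ell_W$; your atomicity argument showing that the appended edge $f\circ b$ cannot be an equivalence is precisely the step needed to secure the $-1$, and it is valid.
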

	
	\begin{defn}\label{defn:locally_short_categories}
		Let $\basecat$ be an atomic orbital category. We say that it is \textit{locally short} if for every object $V\in\basecat$, the slice category $\basecat_{/V}$ has finite length.    
	\end{defn}
	
	Notice that a locally short atomic orbital category which has a final object has finite length.
	
	\begin{rmk}
		We claim that given two two objects $X,Y\in \basecat_{/V}$, if there exist maps $f\colon X\rightarrow Y$ and $g\colon Y\rightarrow X$ in $\basecat_{/V}$ then $f$ and $g$ are equivalences.
		Namely, we may consider the endomorphism $fg\colon X\rightarrow X$ of $X$. Applying \cite[Lemma 3.37]{CLLPartial} to $\basecat_{/V}$ we find that $fg$ is an equivalence. In particular, $(fg)^{-1} g$ is a section of $f$. But then $f$ (and so also $g$) is an equivalence, by \cite[Lemma 4.3.2]{CLLGlobal}.
		
		In particular we conclude that the relation $X\leq Y$ if and only if $\hom_{\basecat_{/V}}(X,Y) \neq \distributiveInitialObject$ defines a well-defined partial order on the isomorphism classes of $\basecat_{/V}$. We note that an atomic orbital category $\basecat$ is locally short if and only if the partial order on $\pi_0(\basecat_{/V})$ has finite length. 
	\end{rmk}
	
	\begin{example}\label{example:atomic_orbital}
		Examples of atomic orbital categories with finite lengths are the orbit category $\orbit_G$ of a finite group $G$, the category $\mathrm{Epi}_d$ of finite sets and surjections up to size $d$, and the category $\mathrm{Vect}^{\mathbf{k}}_{d}$ of $\mathbf{k}$--vector spaces of dimension at most $d$ and surjections, where $\mathbf{k}$ is a finite field.
		
		An example of a locally short atomic orbital category which is itself not of finite length is $\mathrm{Orb}$, the (2,1)-category of finite connected 1--groupoids and faithful maps (i.e. maps that induce injections on $\pi_1$). Another generic example is given by the proper orbit category of an infinite discrete group $G$, that is, the full subcategory of the orbit category on the finite subgroups. 
		
		Finally, the category $\mathrm{Epi}$ of finite sets and surjections of arbitrary size is an an example of an atomic orbital category which is not locally short.
	\end{example}
	
	We may now return to the problem at hand. The following is the key lemma.
	
	\begin{lem}\label{keylem:excisive_implies_semiadditivity_induction}
		Suppose $\basecat$ is a locally short atomic orbital category. Let $\underline{\sC}, \udl{\D}$  be  pointed $\basecat$--categories which are parametrised bicomplete. If a reduced functor $F\colon\udl{\sC}\rightarrow\udl{\D}$ is singleton $w$--excisive for all morphisms $w\colon W \rightarrow V$ in $\finite_{\basecat}$, then it is $\baseCat$--semiadditive.
	\end{lem}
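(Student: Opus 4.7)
The plan is to prove by induction on the length $\ell$ of $\basecat_{/V}$, which is finite by local shortness, that for every $w\colon W\rightarrow V$ in $\finite_{\basecat}$ the functor $V^*F$ sends the norm map along $w$ to an equivalence; this is exactly what $\basecat$-semiadditivity of $F$ requires. For the base case $\ell = 0$, the slice $\basecat_{/V}$ is trivial, so any such $w$ is a fold map $\sqcup_n V \rightarrow V$, and $w$-semiadditivity reduces to ordinary finite semiadditivity of $V^*F$. For the binary fold the singleton $w$-cube is precisely a pushout square, so singleton $w$-excisiveness coincides with ordinary $1$-excisiveness; combined with reducedness, \cref{obs:1-excisive_semiadditive} handles this case, and iterated composition delivers the general $n$-fold case.

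For the inductive step with $\ell > 0$, fix $w\colon W\rightarrow V$ and adopt \cref{setting:pointedReducedExcisive}: by \cref{prop:productsOfStronglyCocartesianCubes} and its dual, we have the singleton cocartesian cube $\alpha$, the singleton cartesian cube $\beta$, and the norm transformation $\aleph\colon\alpha\Rightarrow\beta$, with $\alpha(\distributiveInitialObject)\simeq w_!X$ and $\beta(\distributiveInitialObject)\simeq w_*X$. Since $V^*F$ is singleton $w$-excisive, $V^*F(\alpha)$ is cartesian; the limit-interchanged diagram $\beta_{V^*F}$ from \cref{eqn:norm_map_and_colimit_interchange_on_cubes} is also cartesian by construction. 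It therefore suffices to show that the composite natural transformation $V^*F(\alpha)\rightarrow\beta_{V^*F}$ is an equivalence, for then its value at $\distributiveInitialObject$ yields the desired norm equivalence $V^*F(w_!X)\simeq w_*V^*F(X)$.

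We establish this via \cref{prop:equivalence_of_cartesian_diagrams_away_from_top}. The two cartesian diagrams agree at $\distributiveFinalObject$ since both values are $\ast$ by reducedness of $F$. For the restriction condition, let $s\colon U\rightarrow V$ be a nonequivalence. By \cref{obs:chain_length_induction}, $\basecat_{/U}$ has length at most $\ell-1$, and by the basechange compatibility of \cref{fact:basics_of_singleton_inclusion}(1), the restriction $U^*F$ inherits singleton excisiveness for every map in $\finite_{\basecat_{/U}}$. Applying the inductive hypothesis to the locally short atomic orbital category $\basecat_{/U}$ and the functor $U^*F$, we conclude that $U^*F$ is $\basecat_{/U}$-semiadditive. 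Now by \cref{obs:top_points_in_cubes} applied in the slice $\basecat_{/U}$, the only global point of $s^*(w_*\udl{\Delta}^1)\setminus\distributiveInitialObject$ is $\distributiveFinalObject$, once more handled by reducedness; the remaining parametrised $b$-points, for $b\colon B\rightarrow U$ a nonequivalence, take $s^*\alpha$ and $s^*\beta$ to indexed coproducts and products along some map in $\finite_{\basecat_{/B}}$, and $U^*F$ sends the corresponding norm maps to equivalences by the semiadditivity just established. By naturality of $\aleph$ together with the construction of $\beta_F$ in \cref{eqn:norm_map_and_colimit_interchange_on_cubes}, these pointwise equivalences assemble into the required natural equivalence on $s^*(w_*\udl{\Delta}^1\setminus\distributiveInitialObject)$.

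The main obstacle I anticipate is the last coherence step: verifying that the pointwise norm equivalences supplied by the inductive hypothesis assemble into an honest natural equivalence of diagrams, rather than a mere collection of pointwise equivalences. This should follow from the naturality of the norm construction combined with the explicit pointwise description of $\alpha$ and $\beta$ via orbital decompositions given after \cref{setting:pointedReducedExcisive}, but some careful bookkeeping is required.
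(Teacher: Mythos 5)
Your proposal is correct and follows essentially the same route as the paper: induction on the length of the slice, the base case via \cref{obs:1-excisive_semiadditive}, and the inductive step by comparing the cartesian diagrams $F\alpha$ and $\beta_F$ through the norm transformation using \cref{prop:equivalence_of_cartesian_diagrams_away_from_top}, with the inductive hypothesis supplying the pointwise norm equivalences on the lower fibres. Your closing worry is not a genuine obstacle, since the comparison map is already the fixed natural transformation from \cref{eqn:norm_map_and_colimit_interchange_on_cubes} and one only needs to verify it is an equivalence at every parametrised point.
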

	\begin{proof}
		Since $w$--excisiveness and $w$--semiaddivity are both local conditions, by replacing $\basecat$ with $\basecat_{/V}$ we may assume without loss of generality that $\basecat$ has a final object and is of finite length, and $w$ is a map to the final object. By \cref{obs:1-excisive_semiadditive,obs:chain_length_induction}, the case of length $0$ is true by assumption. We next induct on the length.
		
		By $w$--excisiveness of $F$, the following $w$--cube 
		\[F\alpha\colon w_*\underline{\Delta}^1\xlongrightarrow{w_*\partial} w_*w^*\underline{\sC} \xlongrightarrow{w_!}\underline{\sC}\xlongrightarrow{F}\underline{\D}\]
		is cartesian. By pointedness of $F$, $(F\alpha)(\distributiveFinalObject)\simeq \ast$. Hence,  by induction on lengths via \cref{obs:top_points_in_cubes,obs:chain_length_induction},  the norm map \cref{eqn:norm_map_and_colimit_interchange_on_cubes} witnesses an equivalence of $F\alpha$ to
		\[\beta_F\colon w_*\underline{\Delta}^1\xlongrightarrow{w_*\partial} w_*w^*\underline{\sC} \xlongrightarrow{w_*w^*F} w_*w^*\underline{\D} \xlongrightarrow{w_*} \D\]
		when restricted to $w_*\underline{\Delta}^1\backslash\distributiveInitialObject$. But by \cref{setting:pointedReducedExcisive}, the diagram $\beta_F$  is cartesian. Since $F\alpha$ is cartesian, we see by \cref{prop:equivalence_of_cartesian_diagrams_away_from_top}  that
		\[Fw_!X \rightarrow w_*w^*FX\]
		is an equivalence also, as desired. 
	\end{proof}
	
	% Finally, when $\basecat$ is locally short, since $w_*\udl{\Delta}^1$ only has global points $\distributiveInitialObject$ and $\distributiveFinalObject$ by \cref{obs:top_points_in_cubes}, the same proof as in the previous paragraph works where the inductive step now reduces to the case of finite length atomic orbital categories with final objects, which have been dealt with above.
	
	\begin{rmk}
		For simplicity we have stated the previous result for $\udl{C},\udl{D}$ parametrized bicomplete. However as the proof makes clear, if one takes more care it is possible to prove the statement under weaker assumptions on $\udl{C}$ and $\udl{D}$.
	\end{rmk}
	
	\begin{rmk}\label{rmk:applying_theorem_A_indirectly}
		We would like to apply \cref{mainthm:excisive_approximation_left_adjoint} to prove the next theorem using spherical excision with respect to the parametrised cubes $w_*\Delta^1$ where $w\colon W \rightarrow V$ is a map in $\finite_{\basecat}$. Unwinding the formula in \cref{cons:pre-excisive_approximation_functors,cons:excisive_approximation_functors}, we see that if $F\colon\udl{\sC}\rightarrow \udl{\D}$ preserves final objects, then (implicitly basechanging $F$ to a $\basecat_{/V}$--functor),
		\[\goodwillieTFunctor_{\sigma_w}F\simeq\Omega^wF\Sigma^w\quad\quad \goodwillieApprox_{\sigma_w}F\simeq \colim_n\big(F \longrightarrow \Omega^wF\Sigma^w\longrightarrow \Omega^{2w}F\Sigma^{2w}\longrightarrow\cdots\big).\]
		However, we do not know a priori that our spherically faithful category $\udl{\sC}$ will be spherically differentiable in the sense stipulated by \cref{mainthm:excisive_approximation_left_adjoint} since we do not know in general that limits over the punctured   cubes $\mathring{(w_*\Delta^1)}$ commute with sequential colimits. Fortunately, this does not pose a problem since $\Omega^w\colon\udl{\sC}\rightarrow \udl{\sC}$ commuting with sequential colimits suffices: from  the proof of \cref{lem:approximations_are_idempotent}, the only place where differentiability is used is to perform the commutation $\goodwillieApprox_{\sigma}\goodwillieTFunctor_{\sigma}\simeq \goodwillieTFunctor_{\sigma}\goodwillieApprox_{\sigma}$. For this, it suffices that $\Omega^w$ commutes with sequential colimits, since
		\begin{equation*}
			\begin{split}
				\goodwillieApprox_{\sigma_w}\goodwillieTFunctor_{\sigma_w}F&\simeq \colim_n\big(\Omega^wF\Sigma^w\longrightarrow \Omega^{2w}F\Sigma^{2w}\longrightarrow \Omega^{3w}F\Sigma^{3w}\longrightarrow\cdots\big)\\
				&\simeq \Omega^w\colim_n\big(F\Sigma^w\longrightarrow \Omega^{w}F\Sigma^{2w}\longrightarrow \Omega^{2w}F\Sigma^{3w}\longrightarrow\cdots\big)\\
				&\simeq \goodwillieTFunctor_{\sigma_w}\goodwillieApprox_{\sigma_w}F
			\end{split}
		\end{equation*}
		as required. In particular, we may apply \cref{mainthm:excisive_approximation_left_adjoint} to the special case of the identity functor $\id\colon \udl{\sC}\rightarrow\udl{\sC}$ if $\udl{\sC}$ were spherically faithful with sufficient parametrised (co)limits to get that the identity functor is spherically excisive.
	\end{rmk}
	
	We are now ready to gather the ingredients and prove \cref{alphthm:spherically_faithful_implies_semiadditivity}.
	
	\begin{thm}\label{mainthm:spherically_faithful_implies_semiadditivity}
		Suppose $\basecat$ is a locally short atomic orbital category. Let $\underline{\sC}$  be a pointed $\basecat$--category which is parametrised bicomplete. If $\underline{\sC}$  is spherically faithful, then it is $\baseCat$--semiadditive.
	\end{thm}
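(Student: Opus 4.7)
The plan is to apply the key inductive result \cref{keylem:excisive_implies_semiadditivity_induction} to the identity functor $\id\colon \udl{\sC}\rightarrow\udl{\sC}$. Since $\udl{\sC}$ is pointed, $\id$ is automatically reduced, and the statement that $\id$ is $\basecat$-semiadditive is literally the statement that $\udl{\sC}$ is $\basecat$-semiadditive. Thus everything reduces to establishing that $\id$ is singleton $w$-excisive for every morphism $w\colon W\rightarrow V$ in $\finite_{\basecat}$.

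By \cref{obs:spherical_excisive_implies_singleton_excisive}, it is enough to show that $\id$ is spherically $w$-excisive. For this I would compute the spherical excisive approximation $\goodwillieApprox_{\sigma_w}\id$ and show that the natural map $\id\rightarrow \goodwillieApprox_{\sigma_w}\id$ is an equivalence. Hypothetically one wants to invoke \cref{mainthm:excisive_approximation_left_adjoint}, which requires $\sigma_w$-differentiability of $\udl{\sC}$; this is not automatic, but \cref{rmk:applying_theorem_A_indirectly} observes that the weaker condition that $\Omega^w$ commutes with sequential colimits suffices for the argument to go through, and this is precisely one half of the spherical faithfulness hypothesis. This yields the explicit formula
\[
\goodwillieApprox_{\sigma_w}\id \simeq \colim_n\bigl(\id \longrightarrow \Omega^{w}\Sigma^{w} \longrightarrow \Omega^{2w}\Sigma^{2w}\longrightarrow\cdots\bigr).
\]

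To finish, I would invoke the suspension-loop adjunction \cref{prop:suspension_loop_adjunction} together with the other half of spherical faithfulness, namely that $\Sigma^w$ is fully faithful. Since $\Sigma^w$ is a fully faithful left adjoint to $\Omega^w$, the adjunction unit $\id\rightarrow \Omega^w\Sigma^w$ is an equivalence. Consequently every transition map in the above sequential colimit is an equivalence, whence $\goodwillieApprox_{\sigma_w}\id\simeq \id$. This shows $\id$ is spherically $w$-excisive, hence singleton $w$-excisive for every $w$, and applying \cref{keylem:excisive_implies_semiadditivity_induction} completes the proof.

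The reason there is no serious obstacle left is that all the heavy lifting was already done: the inductive passage from singleton excision to semiadditivity is carried out in the key lemma, and the deduction of spherical excision of $\id$ from the two clauses of spherical faithfulness is essentially formal given the explicit formula for $\goodwillieApprox_{\sigma_w}$. The only delicate point is verifying that one does not need the full strength of $\sigma_w$-differentiability in \cref{mainthm:excisive_approximation_left_adjoint}, which is precisely the content of \cref{rmk:applying_theorem_A_indirectly}.
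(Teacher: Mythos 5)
Your proposal is correct and follows essentially the same route as the paper's own proof: reduce to spherical $w$--excisiveness of the identity via \cref{obs:spherical_excisive_implies_singleton_excisive}, use the explicit formula for $\goodwillieApprox_{\sigma_w}$ together with \cref{rmk:applying_theorem_A_indirectly} to sidestep full $\sigma_w$--differentiability, note that spherical faithfulness makes the unit $\id\rightarrow\Omega^w\Sigma^w$ an equivalence so that $\goodwillieApprox_{\sigma_w}\id\simeq\id$, and conclude with \cref{keylem:excisive_implies_semiadditivity_induction}. No gaps.
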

	\begin{proof}
		Let $w$ be a morphism in $\finite_{\basecat}$. Since $\udl{\sC}$ was $w$--spherically faithful, we get that $\id\rightarrow\Omega^w\Sigma^w$ is an equivalence. Thus, by the formula for $\goodwillieApprox_{\sigma_w}$ from \cref{mainthm:excisive_approximation_left_adjoint} together with \cref{rmk:applying_theorem_A_indirectly}, we see that the transformation $\id_{\udl{\sC}}\rightarrow \goodwillieApprox_{\sigma_w}\id_{\udl{\sC}}$ is an equivalence, i.e. that $\id_{\udl{\sC}}\colon \udl{\sC}\rightarrow\udl{\sC}$ is spherically $w$--excisive. Hence, by \cref{obs:spherical_excisive_implies_singleton_excisive}, we see that $\id_{\udl{\sC}}$ is also singleton $w$--excisive. Since $w$ was arbitrary, we get that $\id_{\udl{\sC}}$ is singleton $w$--excisive for all $w$, and so by \cref{keylem:excisive_implies_semiadditivity_induction}, $\udl{\sC}$ is parametrised semiadditive, as wanted.
	\end{proof}
	
	This then yields \cref{mainThm:sphere_invertible_equivalent_to_parametrised_stable} almost immediately.
	
	\begin{thm}\label{mainThm:sphere_invertible_equivalent_to_parametrised_stable}
		Suppose $\basecat$ is a locally short atomic orbital category.    Let $\udl{\sC}$ be a pointed parametrised presentable $\basecat$--category. Then $\udl{\sC}$ is parametrised stable if and only if it is spherically invertible. 
	\end{thm}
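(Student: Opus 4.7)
My plan is to prove each direction separately, drawing on the previous theorem for one direction and invoking the $\udl{\spectra}$-module structure of parametrised stable presentable categories for the other.

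For the easier $(\Leftarrow)$ direction, I would first observe that spherical invertibility is strictly stronger than spherical faithfulness: an equivalence is certainly fully faithful, and its right adjoint inverse $\Omega^w$ commutes with all colimits, in particular sequential ones. Thus \cref{mainthm:spherically_faithful_implies_semiadditivity} immediately yields $\basecat$-semiadditivity of $\udl{\sC}$. For fibrewise stability, the idea is to apply spherical invertibility to the fold map $\nabla\colon V \sqcup V \to V$ in $\finite_{\basecat}$ for each $V \in \basecat$. A direct inspection of \cref{cor:parametrisedSuspensions} shows that $\nabla_*\udl{\Delta}^1$ is the constant $\basecat_{/V}$-poset on $\Delta^1 \times \Delta^1$, with punctured cube $\nabla_*\udl{\Delta}^1 \setminus \distributiveFinalObject$ fibrewise the cospan $\Lambda^2_0$; consequently $\Sigma^\nabla X$ is the pushout $0 \sqcup_X 0 \simeq \Sigma X$, so $\Sigma^\nabla$ is nothing other than the ordinary fibrewise suspension. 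Spherical invertibility of $\Sigma^\nabla$ then gives invertibility of $\Sigma$ on each $V^*\udl{\sC}$, i.e.\ fibrewise stability; combined with the semiadditivity established above, this delivers parametrised stability.

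For the harder $(\Rightarrow)$ direction, the plan is to appeal to the standard structure theorem that a parametrised presentable $\basecat$-category is parametrised stable precisely when it admits a canonical tensoring over the $\basecat$-symmetric monoidal parametrised presentable category $\udl{\spectra}$ of parametrised spectra (the parametrised counterpart of the classical fact that stable presentable $\infty$-categories are $\spectra$-modules). Under this tensoring one identifies $\Sigma^w(-) \simeq S^w \otimes (-)$, reducing the statement to the $\otimes$-invertibility of the parametrised sphere $S^w \coloneqq \Sigma^w\unit \in V^*\udl{\spectra}$ for every $w\colon W \to V$ in $\finite_{\basecat}$. This I would establish using the sphere calculus of \cref{prop:calculus_of_spheres}: by (3), $S^{w_+} \simeq w_\otimes S^1$, and since $S^1$ is invertible by fibrewise stability of $\udl{\spectra}$ and the $\basecat$-symmetric monoidal structure on $\udl{\spectra}$ preserves parametrised colimits in each variable (so indexed tensors of invertible objects stay invertible), $S^{w_+}$ is invertible. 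Then (1) yields $S^w \simeq \Omega S^{w_+}$, invertible once more by fibrewise stability of $\udl{\spectra}$.

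The main obstacle is the invocation of the $\udl{\spectra}$-module description of parametrised stable presentable categories; this is a foundational input which, while standard in principle, must be taken from the surrounding theory of parametrised higher algebra. Once it is granted, the forward direction becomes a purely formal manipulation of the sphere calculus, and the backward direction is a straightforward combination of \cref{mainthm:spherically_faithful_implies_semiadditivity} with the fold-map computation.
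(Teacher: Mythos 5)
Your proposal is correct and follows essentially the same route as the paper: the backward direction combines \cref{mainthm:spherically_faithful_implies_semiadditivity} with the observation that spherical invertibility for fold maps recovers the ordinary fibrewise suspension and hence fibrewise stability (a point the paper's own proof leaves implicit), and the forward direction reduces to invertibility of $S^w\in V^*\myuline{\spectra}$ via the module structure of $V^*\udl{\sC}$ over $V^*\myuline{\spectra}$ together with \cref{prop:calculus_of_spheres} (1) and (3). One small correction: the invertibility of $w_{\otimes}S^1$ holds because the norm functor $w_{\otimes}$ is \emph{symmetric monoidal} and therefore carries invertible objects to invertible objects; the fact that the tensor product preserves parametrised colimits in each variable is not by itself a reason for indexed tensors to preserve invertibility.
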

	\begin{proof}
		That spherical invertibility implies parametrised stability is  the content of \cref{mainthm:spherically_faithful_implies_semiadditivity}, since spherical invertibility implies in particular spherical faithfulness. For the converse, note by \cref{prop:calculus_of_spheres} (1, 3) that $S^w\in V^*\myuline{\spectra}$ are invertible objects since the norm functors are symmetric monoidal. Hence, $\Sigma^w\colon V^*\myuline{\spectra}\rightarrow V^*\myuline{\spectra}$ is an equivalence. Since $V^*\udl{\sC}$ is a module over $V^*\myuline{\spectra}$ in $\presentable_{\basecat_{/V}}^L$ by \cite[Prop. 2.2.19]{kaifNoncommMotives} by assumption, we thus see that $\Sigma^w\colon V^*\udl{\sC}\rightarrow V^*\udl{\sC}$ is equivalent to the map $\Sigma^w\otimes\id\colon V^*\myuline{\spectra}\otimes V^*\udl{\sC}\xrightarrow{\simeq} V^*\myuline{\spectra}\otimes V^*\udl{\sC}$, which is an equivalence, as was to be shown.
	\end{proof}
	
	As an immediate consequence, we also obtain the following:
	
	\begin{cor}
		Let $\basecat$ and $\udl{\sC}$ be as in \cref{mainThm:sphere_invertible_equivalent_to_parametrised_stable}.  If $\udl{\sC}$ is endowed with a presentably symmetric monoidal structure, then $\udl{\sC}$ is parametrised stable if and only if the parametrised spheres are invertible in $\udl{\sC}$.
	\end{cor}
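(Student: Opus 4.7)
The plan is to reduce the statement to \cref{mainThm:sphere_invertible_equivalent_to_parametrised_stable} by showing that, under the presentably symmetric monoidal hypothesis, the invertibility of the parametrised spheres $S^w$ is equivalent to spherical invertibility of $\udl{\sC}$ in the sense of \cref{defn:spherically_faithful_category}. Since the main theorem identifies parametrised stability with spherical invertibility, this suffices.

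For a fixed morphism $w\colon W \rightarrow V$ in $\finite_{\basecat}$, the key step is to exhibit a natural equivalence of endofunctors $\Sigma^w(-) \simeq S^w \otimes (-)$ on $V^*\udl{\sC}$. This is where the presentability of the symmetric monoidal structure enters: since $-\otimes X$ preserves parametrised colimits in each variable, and since $\Sigma^w$ is defined as the parametrised colimit of a diagram $\initialObject{\sigma}_{w*}(-)$ over $w_*\underline{\Delta}^1\backslash\distributiveFinalObject$, one can commute $\otimes X$ past this colimit to identify $S^w \otimes X = (\Sigma^w \unit) \otimes X$ with $\Sigma^w X$. (This is the same argument already sketched in the example immediately after \cref{defn:spherically_faithful_category}.)

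Given this, the corollary follows quickly. If each $S^w$ is invertible in $\udl{\sC}$, then $S^w \otimes (-)$ is an equivalence of $\basecat_{/V}$--categories, hence so is $\Sigma^w$, so $\udl{\sC}$ is spherically invertible and \cref{mainThm:sphere_invertible_equivalent_to_parametrised_stable} yields parametrised stability. Conversely, if $\udl{\sC}$ is parametrised stable, then \cref{mainThm:sphere_invertible_equivalent_to_parametrised_stable} says $\Sigma^w$ is an equivalence on $V^*\udl{\sC}$, so $S^w \otimes (-) \simeq \Sigma^w$ is an equivalence; evaluating at the unit and using the standard fact that an object $A$ of a symmetric monoidal category is invertible whenever $A \otimes (-)$ is an auto-equivalence, we conclude that $S^w$ is invertible.

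The main obstacle, such as it is, will be justifying the natural equivalence $\Sigma^w(-) \simeq S^w \otimes (-)$ cleanly in the parametrised setting; this requires verifying that $-\otimes X\colon V^*\udl{\sC}\rightarrow V^*\udl{\sC}$ commutes with the colimit defining $\Sigma^w$ (i.e. colimits indexed by the parametrised poset $w_*\underline{\Delta}^1\backslash\distributiveFinalObject$) and that the basepoint inclusion $\initialObject{\sigma}_{w*}$ interacts correctly with $\otimes X$. Both facts are straightforward consequences of the hypothesis that the symmetric monoidal structure is presentable and compatible with the parametrised structure (so that tensoring preserves parametrised colimits variable-wise), together with the fact that $\unit \otimes X \simeq X$.
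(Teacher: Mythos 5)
Your proposal is correct and matches the intended argument: the paper states this as an immediate consequence of \cref{mainThm:sphere_invertible_equivalent_to_parametrised_stable} combined with the natural equivalence $\Sigma^w(-)\simeq S^w\otimes(-)$ already recorded in the example following \cref{defn:spherically_faithful_category}, which is exactly the reduction you carry out. Your extra observation that invertibility of $S^w$ can be recovered from $S^w\otimes(-)$ being an auto-equivalence (via essential surjectivity producing a tensor-inverse) correctly supplies the converse direction.
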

	
	\appendix
	
	\section{Decomposition of colimit diagrams}\label{section:decomposition_of_colimits}
	
	Suppose $\udl{I}$ is a parametrised category which admits a decomposition into a colimit $\udl{I}\simeq \colim_{\udl{J}} \udl{I_j}$. In this appendix we explain how one can compute colimits over $\udl{I}$ in two steps: first one computs the colimit over each $\udl{I}_j$, and then one computes the colimit of these objects over $\udl{J}$. We will deduce this via the approach suggested by Horev-Yanovski in \cite{horevYanovski}.
	
	\begin{defnAppx}
		Suppose $\udl{\sC}_\bullet \colon \udl{I}\to \udl{\cat}_\basecat$ is a functor of $\basecat$-categories. We define $p\colon \udl{\unstraighten}(\udl{\sC}_\bullet)\to \udl{I}$ to be the cocartesian unstraightening of $\udl{\sC}_\bullet$, in the sense of \cite[Rmk~7.2]{shahThesis}.
	\end{defnAppx}
	
	\begin{defnAppx}
		Suppose $\udl{\sC}_\bullet \colon \udl{I}\to \udl{\cat}_\basecat$ is a functor of $\basecat$-categories. We define the $\basecat$-category $\laxlim(\udl{\sC}_\bullet)$ to be the $\basecat$-category of sections of the unstraightening $p\colon \udl{\unstraighten}(\udl{\sC}_\bullet)\to \udl{I}$. More formally $\laxlim(\udl{\sC}_\bullet)$ is defined via the pullback
		\[
		\begin{tikzcd}
			\laxlim(\udl{\sC}_\bullet) \arrow[r] \arrow[d] & \udl{\func}(\udl{I},\udl{\unstraighten}(\udl{\sC}_\bullet)) \arrow[d, "p_*"] \\
			\{\id_{\udl{I}}\} \arrow[r] & \udl{\func}(\udl{I},\udl{I}). 
		\end{tikzcd}
		\]
	\end{defnAppx}
	
	\begin{rmkAppx}
		Given an object $A\in \basecat$ and an object $i\in \udl{I}(A)$, the composite 
		\[
		\udl{A}\xrightarrow{i} \udl{I} \xrightarrow{s} \udl{\unstraighten}(\udl{\sC}_\bullet) 
		\]
		factors through $\udl{\sC}_{i} \coloneqq \udl{A}\times_{\udl{I}} \udl{\unstraighten}(\udl{\sC}_\bullet)$, the fiber of $\udl{\unstraighten}(\udl{\sC}_\bullet)$ over $i\in \udl{I}$. We will identify $s(i)$ with an object of $\udl{\sC}_i$ in this way. An object of $\laxlim(\udl{\sC}_\bullet)$ can be viewed as a family of such objects parametrized by $\udl{I}$.
	\end{rmkAppx}
	
	\begin{rmkAppx}
		Observe that, just as in ordinary category theory, $\lim \udl{\sC}_\bullet$ is the full subcategory of $\laxlim(\udl{\sC}_\bullet)$ spanned by the cocartesian sections. For more details see \cite[Thm. 3.4.2]{kaifPresentable}.
	\end{rmkAppx}
	
	Note that any natural transformation $F_\bullet\colon \udl{\sC}_\bullet \rightarrow \udl{\sD}_\bullet$ induces a functor $F\colon \laxlim(\udl{\sC}_\bullet)\rightarrow \laxlim(\udl{\sD}_\bullet)$, via post-composition with the unstraightening of $F_\bullet$. Because this preserves cocartesian edges, $F$ preserves cocartesian sections and hence restricts to a functor $F\colon \lim \udl{\sC}_\bullet \rightarrow \lim \udl{\sD}_\bullet$.
	
	\begin{lemAppx}\label{lem:rel_adj}
		Suppose $F_\bullet\colon \udl{\sC}_\bullet \rightarrow \udl{\sD}_\bullet$ is a natural transformation such that for all $i\in \udl{I}$, $F_i\colon \udl{\sC}_i\rightarrow \udl{\sD}_i$ is a left adjoint with right adjoint $G_i\colon \udl{\sD}_i\rightarrow \udl{\sC}_i$. Then $F\colon \laxlim(\udl{\sC}_\bullet)\to \laxlim(\udl{\sD}_\bullet)$ admits a right adjoint, which we denote by $G$. Given a section $s\colon \udl{I}\rightarrow \laxlim(\udl{\sD}_\bullet)$ and an object $i\in \udl{I}(A)$, the object $G(s)(i)$ is equivalent to $G_i(s(i))$. Informally, $G$ is given by applying the $G_i$ pointwise to the section $s$. 
	\end{lemAppx}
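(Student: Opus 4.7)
The plan is to build $G$ by constructing a relative right adjoint of $\udl{\unstraighten}(F_\bullet)$ over $\udl{I}$ and then passing to sections. By construction, $F$ is induced by post-composition with the map $\udl{\unstraighten}(F_\bullet)\colon \udl{\unstraighten}(\udl{\sC}_\bullet)\to \udl{\unstraighten}(\udl{\sD}_\bullet)$, which is a morphism of cocartesian fibrations over $\udl{I}$ preserving cocartesian edges; by (un)straightening, its fibre at a point $i\in \udl{I}(A)$ recovers $F_i\colon \udl{\sC}_i \to \udl{\sD}_i$.

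The main technical step is to produce a right adjoint $\widetilde{G}\colon \udl{\unstraighten}(\udl{\sD}_\bullet) \to \udl{\unstraighten}(\udl{\sC}_\bullet)$ to $\udl{\unstraighten}(F_\bullet)$ relative to $\udl{I}$, i.e.\ an adjunction internal to $(\cat_\basecat)_{/\udl{I}}$. I would obtain this from the parametrised analogue of Lurie's fibrewise adjoint functor theorem \cite[Prop.~7.3.2.6]{lurieHA}: since each fibre $F_i$ admits a right adjoint $G_i$, the morphism $\udl{\unstraighten}(F_\bullet)$ admits a relative right adjoint over $\udl{I}$ whose fibres are precisely the $G_i$. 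Alternatively, one may assemble the pointwise units $\eta_i$ and counits $\varepsilon_i$ directly into transformations $\id \to \widetilde{G}\circ \udl{\unstraighten}(F_\bullet)$ and $\udl{\unstraighten}(F_\bullet)\circ \widetilde{G}\to \id$ over $\udl{I}$ and check the triangle identities fibrewise.

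Passing to sections is then formal: a relative adjunction over $\udl{I}$ induces an honest adjunction on the $\basecat$--categories of sections by whiskering its unit and counit with an arbitrary section, and the triangle identities persist. This yields the desired right adjoint $G$ to $F\colon \laxlim(\udl{\sC}_\bullet)\to \laxlim(\udl{\sD}_\bullet)$, and the pointwise formula $G(s)(i)\simeq G_i(s(i))$ follows immediately by restricting the constructed $\widetilde{G}$ and its counit to the fibre over $i$, where they are identified with $G_i$ and $\varepsilon_i$ respectively.

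The hard part will be the construction of the relative right adjoint $\widetilde{G}$ in the setting of $\basecat$--categories: while this is routine in the unparametrised case, here it requires either an appeal to a parametrised fibrewise adjoint functor theorem or a direct unit/counit construction assembled from the fibrewise data. Once this is secured, the remaining assertions of the lemma are purely formal.
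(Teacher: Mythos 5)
Your proposal is correct in outline and rests on the same core idea as the paper's proof: fibrewise right adjoints assemble into a relative right adjoint over $\udl{I}$, which then induces an adjunction on section categories computed pointwise. The difference is in how the technical step is executed. You propose to construct the relative right adjoint $\widetilde{G}$ internally to $\cat_{\basecat}$, and you correctly flag that this is the hard part, since a parametrised relative adjoint functor theorem is not off-the-shelf. The paper sidesteps exactly this: using \cite[Rmk.~7.4]{shahThesis} it identifies parametrised cocartesian fibrations over $\udl{I}$ with ordinary cocartesian fibrations over the total category $\totalCategory(\udl{I})$, identifies $\laxlim(\udl{\sC}_\bullet)$ with the sections of the resulting fibration over $\totalCategory(\udl{I})$ whose restriction to each fibre over $\basecat$ is cocartesian, and then cites \cite[Lem.~3.19]{sil_globalize}, which is proved by precisely the relative-adjunction argument you describe but in the unparametrised setting where \cite[Prop.~7.3.2.6]{lurieHA} applies directly. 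So the paper's route is the recommended way to close the gap you acknowledge. One point your sketch should make explicit in either formulation: $\laxlim(\udl{\sC}_\bullet)$ consists of \emph{parametrised} sections, i.e.\ sections of the total-category fibration preserving the cocartesian edges over $\basecat\op$, so for postcomposition with $\widetilde{G}$ to be well defined one must check that $\widetilde{G}$ preserves these edges. This is not automatic for a relative right adjoint and is exactly where the hypothesis that each $G_i$ is a \emph{parametrised} right adjoint (hence compatible with the restriction functors), rather than merely a fibrewise one over each point of $\basecat$, is used.
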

	
	\begin{proof}
		As observed in \cite[Rmk. 7.4]{shahThesis}, the $\infty$-category of cocartesian fibrations over $\udl{I}$ can be identified with ordinary cocartesian fibrations over $\totalCategory(\udl{I})$. One may check that under this correspondence, $\laxlim(\udl{\sC}_\bullet)$ corresponds to sections of the cocartesian fibrations of the corresponding functor $\udl{\sC}_\bullet\colon \totalCategory(\udl{I})\rightarrow \cat$ such that the restriction to the fiber of $\totalCategory(\udl{I})$ over any point $A\in \basecat$ is cocartesian. Under these identifications the claimed result is a special case of \cite[Lem. 3.19]{sil_globalize}.
	\end{proof}
	
	Now suppose that we are in the special case that $\udl{\sC}_\bullet$ is the constant functor on a $\basecat$-category $\udl{\sC}$. Then a simple calculation shows that $\underline{\unstraighten}(\udl{\sC}_\bullet)$ is given by $\pi_2\colon \udl{\sC}\times \udl{I}\rightarrow \udl{I}$, and so $\laxlim(\udl{\sC}_\bullet)$ is equivalent to $\udl{\func}(\udl{I},\udl{\sC})$. Now suppose we are given a cone $F\colon \udl{\sC}\rightarrow \udl{\sD}_\bullet$ of pointwise left adjoints. The previous result then gives an adjunction
	\[
	F\colon \func(\udl{I},\udl{\sC}) \leftrightarrows \laxlim(\udl{\sD}_\bullet) \colon G.
	\]
	
	We will now explain how to apply this to identify the right adjoint out of a limit.
	
	\begin{thmAppx}
		Suppose $\udl{I}$ is a $\basecat$-category, $\udl{\sD}_\bullet\colon \udl{I} \rightarrow \cat_\basecat$ is a diagram of $\basecat$-categories, and $\udl{\sC}\rightarrow \udl{\sD}_\bullet$ is a cone. Assume that for every $i\in \udl{I}$ the functor $F_i\colon \udl{\sC}\rightarrow \udl{\sD}_i$ admits a right adjoint $G_i$, and that $\udl{\sC}$ admits all $\udl{I}$-limits. Then the functor $F\colon \udl{\sC}\rightarrow \lim \udl{\sD}_\bullet$ admits a right adjoint, described as the composite
		\[
		\lim \udl{\sD}_\bullet \xrightarrow{G} \udl{\func}(\udl{I},\udl{\sC}) \xrightarrow{\lim_{\udl{I}}} \udl{\sC}.
		\]
	\end{thmAppx}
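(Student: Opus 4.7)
The plan is to realize $F$ as a composite of left adjoints that factor through $\laxlim(\udl{\sD}_\bullet)$, then take right adjoints step by step. The two adjunctions to chain will be the classical $\const \dashv \lim_{\udl{I}}$ adjunction (available since $\udl{\sC}$ admits $\udl{I}$-limits) and the relative adjunction provided by \cref{lem:rel_adj}.

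First, I would reinterpret the cone $F$ by unwinding the adjunction $\const\dashv \lim_{\udl{I}}$ on $\cat_\basecat$-valued diagrams: the cone corresponds to a natural transformation $F_\bullet\colon \underline{\udl{\sC}}\to \udl{\sD}_\bullet$ in $\func(\udl{I},\cat_\basecat)$, and since $\laxlim(\underline{\udl{\sC}})\simeq \udl{\func}(\udl{I},\udl{\sC})$, post-composition with the unstraightening of $F_\bullet$ yields a functor $\widetilde{F}\colon \udl{\func}(\udl{I},\udl{\sC})\to \laxlim(\udl{\sD}_\bullet)$. Concretely, $\widetilde{F}(s)$ is the section $i\mapsto F_i(s(i))$. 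Under this setup, $F$ is precisely the composite $\udl{\sC}\xrightarrow{\const}\udl{\func}(\udl{I},\udl{\sC})\xrightarrow{\widetilde{F}}\laxlim(\udl{\sD}_\bullet)$ with image landing in the full subcategory $\lim \udl{\sD}_\bullet$; indeed, the cone condition is exactly the assertion that $\widetilde{F}$ sends constant sections to cocartesian ones.

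Second, I would apply \cref{lem:rel_adj} to $F_\bullet$: since each $F_i$ admits the right adjoint $G_i$ by hypothesis, $\widetilde{F}$ admits a right adjoint $G\colon \laxlim(\udl{\sD}_\bullet)\to \udl{\func}(\udl{I},\udl{\sC})$ computed pointwise by the $G_i$. Composing this with the adjunction $\const\dashv \lim_{\udl{I}}$ gives
\[
\widetilde{F}\circ \const\colon \udl{\sC}\rightleftarrows \laxlim(\udl{\sD}_\bullet)\cocolon \lim_{\udl{I}}\circ G.
\]
Finally, because $\lim \udl{\sD}_\bullet \hookrightarrow \laxlim(\udl{\sD}_\bullet)$ is fully faithful and $\widetilde{F}\circ\const \simeq F$ factors through it, the adjunction restricts to $F\colon \udl{\sC}\rightleftarrows \lim \udl{\sD}_\bullet \cocolon \lim_{\udl{I}}\circ G|_{\lim \udl{\sD}_\bullet}$, as desired.

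The only point demanding genuine care is the identification of $F$ with $\widetilde{F}\circ \const$ in the first step; everything else is formal composition of adjunctions followed by restriction to a full subcategory. In particular, the fact that $G$ itself need not send cocartesian sections to cocartesian ones poses no issue, because we apply $\lim_{\udl{I}}$ afterwards to land back in $\udl{\sC}$.
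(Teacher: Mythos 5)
Your proposal is correct and follows essentially the same route as the paper: factor $F$ as the diagonal $\udl{\sC}\to\udl{\func}(\udl{I},\udl{\sC})$ followed by the pointwise functor into $\laxlim(\udl{\sD}_\bullet)$, take right adjoints of each piece (using \cref{lem:rel_adj} for the second), and then restrict along the fully faithful inclusion $\lim\udl{\sD}_\bullet\subseteq\laxlim(\udl{\sD}_\bullet)$ after checking the composite lands in cocartesian sections. The only cosmetic difference is that you attribute the cocartesianness of the image to the cone condition, whereas the paper notes that constant sections are cocartesian and that $F$ preserves cocartesian edges; both observations are valid.
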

	
	\begin{proof}
		Consider the functor 
		\[
		\udl{\sC} \xrightarrow{\Delta} \udl{\func}(\udl{I},\udl{\sC}) \xrightarrow{F} \laxlim(\udl{\sD}_\bullet). 
		\]
		By our assumptions this admits a right adjoint, given by the composite 
		\[
		\laxlim(\udl{\sD}_\bullet)\xrightarrow{G} \udl{\func}(\udl{I},\udl{\sC}) \xrightarrow{\lim_{\udl{I}}} \udl{\sC}.
		\]
		We claim this restricts to the adjunction of the statement, for which it suffices to check that the first composite factors through $\lim(\udl{\sD}_\bullet)$. The functor $\Delta$ clearly sends objects to cocartesian sections, because any equivalence is cocartesian. We have previously observes that $F$ does as well, see the discussion before Lemma \ref{lem:rel_adj}, and so the composite restricts as required.
	\end{proof}
	
	Note that the previous theorem admits an obvious dual statement, which we apply to immediately obtain the following corollary.
	
	\begin{corAppx}[Decomposition of colimits]\label{prop:horev_yanovski}
		Consider a diagram $\udl{I}_\bullet\colon \udl{J}\rightarrow \cat_\basecat$, and write $\udl{I}$ for the colimit of $\udl{I}_\bullet$. Suppose $\udl{\sC}$ is some $\basecat$-category which admits $\udl{J}$-colimits and $\udl{I}_j$-colimits for all $j\in \udl{J}$. Then $\udl{\sC}$ admits $\udl{I}$-colimits and $\colim_{\udl{I}}\colon \func(\udl{I},\udl{\sC})\to \udl{\sC}$ is equivalent to the composite
		\[
		\udl{\func}(\udl{I},\udl{\sC}) \simeq \lim_{\udl{J}}\udl{\func}(\udl{I}_j,\udl{\sC}) \xrightarrow{\colim_{\udl{I}_j}} \func(\udl{J},\udl{\sC}) \xrightarrow{\colim_{\udl{J}}} \udl{\sC}. \qedhere
		\]
	\end{corAppx}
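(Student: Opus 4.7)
The plan is to apply the dual of the preceding theorem to the cone of constant-diagram functors indexed by $\udl{J}$. Since $\udl{\func}(-,\udl{\sC})\colon \cat_{\basecat}\op\to\cat_{\basecat}$ sends colimits to limits, the equivalence $\udl{I}\simeq \colim_{\udl{J}}\udl{I}_j$ immediately gives the first identification in the statement, namely $\udl{\func}(\udl{I},\udl{\sC})\simeq \lim_{\udl{J}}\udl{\func}(\udl{I}_j,\udl{\sC})$.

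Next I would check that under this equivalence, the constant-diagram functor $\Delta_{\udl{I}}\colon \udl{\sC}\to \udl{\func}(\udl{I},\udl{\sC})$ corresponds to the map $\udl{\sC}\to \lim_{\udl{J}}\udl{\func}(\udl{I}_j,\udl{\sC})$ induced by the cone of constant-diagram functors $\Delta_{\udl{I}_j}\colon \udl{\sC}\to \udl{\func}(\udl{I}_j,\udl{\sC})$. This is formal: for every structure map $f\colon \udl{I}_j\to \udl{I}_{j'}$ of the diagram $\udl{I}_\bullet$ one has $f^*\circ \Delta_{\udl{I}_{j'}}\simeq \Delta_{\udl{I}_j}$, so the $\Delta_{\udl{I}_j}$ assemble into a cone whose induced map to the limit identifies with $\Delta_{\udl{I}}$ by the universal property. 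By assumption, each component $\Delta_{\udl{I}_j}$ admits $\colim_{\udl{I}_j}$ as left adjoint.

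Applying the dual of the preceding theorem --- obtained by passing to opposite categories throughout, which exchanges left and right adjoints and is compatible with the formation of limits --- I would then conclude that the functor $\udl{\sC}\to \lim_{\udl{J}}\udl{\func}(\udl{I}_j,\udl{\sC})$ admits a left adjoint given by the claimed composite. Here, the pointwise left adjoints $\colim_{\udl{I}_j}$ naturally assemble into a functor with target $\udl{\func}(\udl{J},\udl{\sC})$ rather than $\udl{\func}(\udl{J}\op,\udl{\sC})$ because passing to left adjoints reverses the direction of the restriction maps. By uniqueness of adjoints, this left adjoint must coincide with $\colim_{\udl{I}}$, simultaneously establishing the existence of $\udl{I}$-colimits in $\udl{\sC}$ and identifying the colimit functor as stated. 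The one mildly subtle point is the careful bookkeeping of variance when dualising the preceding theorem, but this is otherwise routine since its proof rests purely on the universal properties of limits and adjunctions.
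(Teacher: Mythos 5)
Your proof is correct and takes essentially the same route as the paper, which likewise dispatches the corollary by dualising the preceding theorem and applying it to the cone of constant-diagram functors $\Delta_{\udl{I}_j}\colon \udl{\sC}\to\udl{\func}(\udl{I}_j,\udl{\sC})$, using that $\udl{\func}(-,\udl{\sC})$ carries the colimit $\udl{I}\simeq\colim_{\udl{J}}\udl{I}_j$ to a limit. Your explicit identification of the induced map to the limit with $\Delta_{\udl{I}}$ and your bookkeeping of variance merely spell out what the paper records as the "obvious dual statement".
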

		
	\bibliographystyle{plain}
	\bibliography{reference}
\end{document}